\def\refer#1{~\ref{#1}}
\def\refeq#1{~(\ref{#1})}
\def\ccite#1{~\cite{#1}}
\def\longformule#1#2{
\displaylines{ \qquad{#1} \hfill\cr \hfill {#2} \qquad\cr } }
\def\inte#1{
\displaystyle\mathop{#1\kern0pt}^\circ }
\def\vect#1{
\overrightarrow{#1} }
\let\pa=\partial
\let\al=\alpha
\let\d=\delta
\let\lam=\lambda
\let\s=\sigma
\let\f=\frac
\let\p=\psi
\let\om=\omega
\let\D=\Delta
\let\Om=\Omega
\let\wt=\widetilde
\let\wh=\widehat
\def\cA{{\mathcal A}}
\def\cB{{\mathcal B}}
\def\cC{{\mathcal C}}
\def\cD{{\mathcal D}}
\def\cE{{\mathcal E}}
\def\cF{{\mathcal F}}
\def\cH{{\mathcal H}}
\def\cS{{\mathcal S}}
\def\grad{\nabla}
\def\dH{{H}}
\def\virgp{\raise 2pt\hbox{,}}
\def\cdotpv{\raise 2pt\hbox{;}}
\def\eqdefa{\buildrel\hbox{\footnotesize def}\over =}
\def\Id{\mathop{\rm Id}\nolimits}
\def\C{\mathop{\mathbb C\kern 0pt}\nolimits}
\def\DD{\mathop{\mathbb D\kern 0pt}\nolimits}
\def\EE{\mathop{{\mathbb E \kern 0pt}}\nolimits}
\def\K{\mathop{\mathbb K\kern 0pt}\nolimits}
\def\N{\mathop{\mathbb N\kern 0pt}\nolimits}
\def\Q{\mathop{\mathbb Q\kern 0pt}\nolimits}
\def\R{\mathop{\mathbb R\kern 0pt}\nolimits}
\def\SS{\mathop{\mathbb S\kern 0pt}\nolimits}
\def\ZZ{\mathop{\mathbb Z\kern 0pt}\nolimits}
\def\TT{\mathop{\mathbb T\kern 0pt}\nolimits}
\def\P{\mathop{\mathbb P\kern 0pt}\nolimits}
\newcommand{\ds}{\displaystyle}
\newcommand{\Z}{{\ZZ}}
\def\dv{\mbox{\rm div}}
\def\dive{\mathop{\rm div}\nolimits}
\def\Supp{\mathop{\rm Supp}\nolimits\ }
\def\no{\noindent}
\def\na{\nabla}
\def\p{\partial}
\def\th{\theta}
\def\vcurl{v^{\rm h}_{\rm curl}}
\def\nablah{\nabla_{\rm h}}
\def\vdiv{v^{\rm h}_{\rm div}}
\newcommand{\beq}{\begin{equation}}
\newcommand{\eeq}{\end{equation}}
\newcommand{\ben}{\begin{eqnarray}}
\newcommand{\een}{\end{eqnarray}}
\newcommand{\beno}{\begin{eqnarray*}}
\newcommand{\eeno}{\end{eqnarray*}}
\newcommand{\andf}{\quad\hbox{and}\quad}
\newcommand{\with}{\quad\hbox{with}\quad}
\newtheorem{defi}{Definition}[section]
\newtheorem{thm}{Theorem}[section]
\newtheorem{lem}{Lemma}[section]
\newtheorem{prop}{Proposition}[section]
\begin{document}
\title[Regularity criterion for 3-D
 Navier-Stokes system]
{On the critical one component regularity for 3-D
 Navier-Stokes system}
 \author[J.-Y. CHEMIN]{Jean-Yves Chemin}
\address [J.-Y. Chemin]%
{Laboratoire J.-L. Lions, UMR 7598 \\
Universit\'e Pierre et Marie Curie, 75230 Paris Cedex 05, FRANCE }
\email{chemin@ann.jussieu.fr}
\author[P. ZHANG]{Ping Zhang}%
\address[P. Zhang]
 {Academy of
Mathematics $\&$ Systems Science and  Hua Loo-Keng Key Laboratory of
Mathematics, The Chinese Academy of Sciences\\
Beijing 100190, CHINA } \email{zp@amss.ac.cn}
\date{2/06/2013}
\maketitle

\begin{abstract} Given an initial data $v_0$
with vorticity~$\Om_0=\na\times v_0$ in~$L^{\frac 3 2},$ (which
implies that~$v_0$ belongs to the Sobolev space~$H^{\frac12}$),  we
prove that the solution~$v$ given by the classical Fujita-Kato
theorem  blows up in a finite time~$T^\star$   only if, for any $p$
in~$ ]4,6[$ and any unit vector~$e$ in~$\R^3,$ there holds $
\int_0^{T^\star}\|v(t)\cdot e\|_{\dH^{\f12+\f2p}}^p\,dt=\infty.$ We
remark that all these quantities  are scaling invariant under the
scaling transformation of Navier-Stokes system.
\end{abstract}

\noindent {\sl Keywords:}  Incompressible Navier-Stokes Equations,
Blow-up criteria, Anisotropic\\ Littlewood-Paley Theory\

\vskip 0.2cm

\noindent {\sl AMS Subject Classification (2000):} 35Q30, 76D03  \

\setcounter{equation}{0}
\section{Introduction}

In the present work, we investigate necessary conditions for the
breakdown of  the regularity of regular solutions to the following
3-D homogeneous incompressible Navier-Stokes system
\begin{equation*}
(NS)\qquad \left\{\begin{array}{l}
\displaystyle \pa_t v + \dv (v\otimes v) -\D v+\grad \Pi=0, \qquad (t,x)\in\R^+\times\R^3, \\
\displaystyle \dv\, v = 0, \\
\displaystyle  v|_{t=0}=v_0,
\end{array}\right. \label{1.1}
\end{equation*}
where $v=(v^1,v^2, v^3)$ stands for the   velocity of the fluid and
$\Pi$ for the pressure.  Let us first recall some fundamental
results proved by J. Leray in his seminal  paper\ccite{Leray}.
\begin{thm}
\label{leraysemiregul}
{\sl Let us consider an initial data~$v_0$
 which belongs to the  inhomogeneous Sobolev space~$H_{\rm in}^1(\R^3)$. There exists a (unique) maximal positive time of existence~$T^\star$
 such that a unique solution~$v$ of $(NS)$ exists on~$[0,T^\star[\times\R^3,$ which is continuous with value in~$H_{\rm in}^1(\R^3)$
 and the gradient of which belongs to~$L^2_{\rm loc}([0,T^\star[;H^1_{\rm in}(\R^3))$. Moreover, if~$\|v_0\|_{L^2}\|\nabla v_0\|_{L^2}$ is
  small enough, then~$T^\star$ is infinite. If~$T^\star$ is finite, we have, for any~$q$ greater than~$3$,
$$
\forall\ t<T^\star\,,\ \|v(t)\|_{L^q} \geq  \frac {C_q}
{(T^\star-t)^{\frac 12\left(1-\frac3q\right)}}\,\cdotp
$$
}
\end{thm}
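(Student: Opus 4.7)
The plan is driven by the subcritical scaling of~$L^q$ for~$q>3$. The system~$(NS)$ is invariant under~$v(t,x)\mapsto\lambda v(\lambda^2 t,\lambda x)$, and this transformation sends the~$L^q$ norm of the initial datum to~$\lambda^{1-3/q}\|v_0\|_{L^q}$. The natural lifespan of the solution should therefore scale like~$\|v_0\|_{L^q}^{-\frac{2q}{q-3}}$, and once this quantitative local existence is in hand, the desired lower bound is obtained by reading this inequality at every~$t_0<T^\star$ and invoking the maximality of~$T^\star$.

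The first step is the quantitative local well-posedness in~$L^q$. I would use the Duhamel formula
$$
v(t)=e^{t\D}v_0-\int_0^t e^{(t-s)\D}\,\mathbb{P}\,\dive(v\otimes v)\,ds,
$$
where~$\mathbb{P}$ is the Leray projector, and run a contraction argument in the Kato-type space
$$
X_T\eqdef\Bigl\{v\in C([0,T];L^q)\,:\,\sup_{0<t\le T}\|v(t)\|_{L^q}+\sup_{0<t\le T}t^{\frac{3}{2q}}\|v(t)\|_{L^\infty}<\infty\Bigr\}.
$$
The heat kernel estimates~$\|e^{t\D}f\|_{L^\infty}\lesssim t^{-\frac{3}{2q}}\|f\|_{L^q}$ and~$\|e^{t\D}\mathbb{P}\dive f\|_{L^r}\lesssim t^{-\frac12-\frac{3}{2}(\frac{1}{q}-\frac{1}{r})}\|f\|_{L^q}$ (applied for~$r\in\{q,\infty\}$), combined with~$\|v\otimes v\|_{L^q}\le\|v\|_{L^q}\|v\|_{L^\infty}$, yield a bilinear estimate that closes with an extra factor~$T^{\frac{q-3}{2q}}$. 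The smallness condition~$T^{\frac{q-3}{2q}}\|v_0\|_{L^q}\lesssim 1$, equivalently~$T\le c_q\|v_0\|_{L^q}^{-\frac{2q}{q-3}}$, then delivers a unique mild solution on~$[0,T]$.

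The second step identifies this mild solution with the~$H^1$ solution of Theorem\refer{leraysemiregul} whenever the data lies in~$H^1\cap L^q$, by weak-strong uniqueness: the mild solution sits in a Serrin class on any small initial interval, in which uniqueness among~$H^1$ flows is classical. For any~$t_0\in[0,T^\star)$, Theorem\refer{leraysemiregul} gives~$v(t_0)\in H^1\hookrightarrow L^q$ for~$q\in[2,6]$, while for~$q>6$ a brief parabolic smoothing step first places the solution in any~$L^q$. Applying Step~1 at~$t_0$ extends the flow past the time~$t_0+c_q\|v(t_0)\|_{L^q}^{-\frac{2q}{q-3}}$, so the maximality of~$T^\star$ forces
$$
T^\star-t_0\ \ge\ c_q\,\|v(t_0)\|_{L^q}^{-\frac{2q}{q-3}}\virgp
$$
which inverts to the stated lower bound, with exponent~$\frac{q-3}{2q}=\frac12\bigl(1-\frac{3}{q}\bigr)$.

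The main obstacle lies in the bilinear estimate of Step~1: the time weight~$\beta=\frac{3}{2q}$ in~$X_T$ must be matched exactly to the~$L^q\to L^\infty$ decay of the heat semigroup so that both the~$L^q$ and the weighted~$L^\infty$ components of the bilinear term acquire the same positive power of~$T$. Once the functional framework is tuned correctly, the remaining ingredients~---~consistency between Leray's~$H^1$ flow and the Kato mild solution, continuation up to the maximal time, and the algebraic inversion of the lifespan inequality~---~follow from now-standard arguments.
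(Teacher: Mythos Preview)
The paper does not prove Theorem\refer{leraysemiregul}; it is stated in the introduction as a classical result of Leray\ccite{Leray} and serves only as historical background. There is therefore no ``paper's own proof'' to compare against.

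That said, your sketch is a correct and standard modern route to the blow-up lower bound. The scaling heuristic is right, the Kato-type contraction in~$X_T$ with weight~$t^{3/(2q)}$ on the~$L^\infty$ component closes exactly as you describe (the bilinear term picks up the factor~$T^{(q-3)/(2q)}$ in both norms, since~$\int_0^t(t-s)^{-1/2}s^{-3/(2q)}\,ds$ and~$t^{3/(2q)}\int_0^t(t-s)^{-1/2-3/(2q)}s^{-3/(2q)}\,ds$ both scale like~$t^{(q-3)/(2q)}$), and the inversion of the lifespan bound at each~$t_0$ is the right way to conclude. Note that this approach, due essentially to Kato, postdates Leray; Leray's original argument in\ccite{Leray} proceeds instead through~$H^1$ energy estimates, obtaining first a lower bound on~$\|\nabla v(t)\|_{L^2}$ near~$T^\star$ and then transferring it to~$\|v(t)\|_{L^q}$ via interpolation and Sobolev embedding. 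Your approach is arguably cleaner for the~$L^q$ statement taken in isolation, but it does not by itself address the~$H^1$ local theory or the small-data global existence claimed in the first part of the theorem; those require a separate (equally classical) energy argument.
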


Let us also mention that  in\ccite{Leray}, J. Leray proved also the
existence  (but not the uniqueness) of global weak (turbulent in J.
Leray's terminology) solutions of $(NS)$ with initial data only in
$L^2(\R^3)$. In the present paper, we only deal  with solutions
which are regular to be unique.

In\ccite{Leray}, J. Leray  emphasized two basic facts about the
homogeneous incompressible Navier-Stokes system: the $L^2$ energy
estimate and the scaling invariance.

\medbreak

Because the vector field~$v$ is divergence free, the energy estimate
formally reads
$$
\frac 12\frac d {dt} \|v(t)\|_{L^2}^2  +\|\nabla v(t)\|_{L^2}^2 =0.
$$
After time  integration, this  gives \beq \label{energynormeq} \frac
12 \|v(t)\|_{L^2}^2  +\int_0^t\|\nabla v(t')\|_{L^2}^2dt' = \frac 12
\|v_0\|_{L^2}^2. \eeq This estimate is the cornerstone of the proof
of the existence of global turbulent solution to $(NS)$ done by J.
Leray in\ccite{Leray}.  The energy estimate  relies (formally) on
the fact that if~$v$ is a divergence free vector field,~$ (v\cdot
\nabla f|f)_{L^2} =0~$ and that~$ (\nabla p|v)_{L^2}=0$. In the
present work, we shall use the more general fact that for any
divergence free vector field~$v$ and any function~$a$, we have
$$
\int_{\R^3} v(x) \cdot \nabla a(x) |a(x)|^{p-2}a(x)\, dx =
0\qquad\mbox{for any}\ \ p\in ]1,\infty[.
$$
This will lead to the ~$L^p$ type energy  estimate.

\medbreak The scaling invariance is the fact that if~$v$ is a
solution of $(NS)$ on~$[0,T]\times \R^3$ associated with an initial
data~$v_0$, then~$\lam v(\lam^2t,\lam x)$ is also a solution of
$(NS)$ on~$[0,\lam^{-2}T]\times \R^3$ associated with the initial
data~$\lam v_0(\lam x)$ . The importance of this point can be
illustrated by this sentence coming from\ccite{Leray} {\it`` \dots
les \'equations aux dimensions permettent de pr\'evoir a priori
presque toutes les in\'egalit\'es que nous \'ecrirons \dots
"}\footnote{This can be translated by {\it "The scaling allows to
guess almost all the inequalities written in this paper"}} The
scaling property is also the foundation of the Kato theory  which
gives a general method to solve (locally or globally) the
incompressible Navier-Stokes equation  in critical spaces i.e.
spaces with the norms of which are invariant under the scaling. In
the present work, we only use  such scaling invariant spaces. Let us
exhibit some examples of scaling invariant norms. For~$p\geq 2$, the
norms of
$$
 L^p_t(H^{\frac 12 +\frac 2 p})\andf L^p_t ( L_x^{3+\frac {6} {p-2}}).
$$
are scaling invariant norms. The spaces~$H^{\frac 12}$ are~ $ L^3$
are scaling invariant spaces for the initial data~$v_0$ . Let us
point out that in the case when the space dimension is two,  the
energy norm  which appears in Relation\refeq{energynormeq} is
scaling invariant. This allows to prove that in the two dimensional
case, turbulent solutions are unique and regular.

The first result  of local (and global for small initial data)
wellposedness of $(NS)$ in a scaling invariant space was proved by
H. Fujita and T. Kato in~1964 (see\ccite{fujitakato}) for initial
data in the homogenenous Sobolev space~$H^{\frac12}$.  More
precisely, we have the following statement.
\begin{thm}
\label{fujitakatotheo} {\sl Let us consider an initial data~$v_0$ in
the homogeneous Sobolev space~$H^{\frac 12}(\R^3)$. There exists a
(unique) maximal positive time of existence~$T^\star$ such that a
unique solution~$v$  of $(NS)$ exists on~$[0,T^\star[\times\R^3$
which is continuous in time  with value in~$H^{\frac 12} (\R^3)$ and
belongs to~$L^2_{\rm loc}([0,T^\star[;H^\frac 32(\R^3))$. Moreover,
if the quantity~$\|v_0\|_{H^{\frac12}}$ is small enough,
then~$T^\star$ is infinite. If~$T^\star$ is finite, we have, for
any~$q$ greater than~$3$,
$$
\forall\ t<T^\star\,,\ \|v(t)\|_{L^q} \geq C_q \frac 1
{(T^\star-t)^{\frac 12\left(1-\frac3q\right)}}\,\cdotp
$$
}
\end{thm}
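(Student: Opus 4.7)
The plan is to treat the two assertions of the theorem separately: existence (and global existence for small data) via a Picard iteration in a scaling invariant space, and the blow-up lower bound via the scaling invariance of $(NS)$ combined with Kato's $L^q$ theory.

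For the fixed point step, I would work with the Duhamel formulation
\[
v(t)=e^{t\D}v_0-\int_0^t e^{(t-s)\D}\,\P\,\dive(v\otimes v)(s)\,ds,
\]
where \P\ is the Leray projector. The natural resolution space is
\[
X_T\eqdef\cC([0,T];\dot H^{1/2})\cap L^2([0,T];\dot H^{3/2}).
\]
The heat flow yields $\|e^{t\D}v_0\|_{X_T}\leq C\|v_0\|_{\dot H^{1/2}}$, and the product law $\dot H^{1/2}\cdot\dot H^{1/2}\hookrightarrow L^2$ combined with the maximal regularity of the heat semigroup gives the bilinear estimate
\[
\Bigl\|\int_0^t e^{(t-s)\D}\P\,\dive(u\otimes w)\,ds\Bigr\|_{X_T}\leq C_0\|u\|_{X_T}\|w\|_{X_T}.
\]
A standard abstract fixed point lemma (Picard) then produces a unique solution in $X_T$ as soon as $4C_0\|e^{t\D}v_0\|_{X_T}<1$. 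This condition holds for every $T$ if $\|v_0\|_{\dot H^{1/2}}$ is small enough; for general data one splits $v_0=v_0^L+v_0^H$ in Fourier so that $\|v_0^H\|_{\dot H^{1/2}}$ is small while $\|e^{t\D}v_0^L\|_{X_T}\to 0$ as $T\to 0$ by dominated convergence, which yields the local existence statement on some $[0,T]$. Uniqueness, persistence of regularity, and continuity in time with values in $\dot H^{1/2}$ are then standard.

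For the blow-up criterion, the decisive observation is that $L^q$ is sub-critical when $q>3$: under the scaling $v_\la(t,x)=\la v(\la^2 t,\la x)$ that preserves $(NS)$, one has $\|v_\la(0)\|_{L^q}=\la^{1-3/q}\|v(0)\|_{L^q}$. Kato's fixed point theory performed in the space $\{u:\,t^{\frac12(1-3/q)}u\in L^\infty_T(L^q)\}$ furnishes a universal constant $c_q>0$ with the following property: any divergence free $u_0\in L^q$ generates a strong solution defined at least on an interval of length $c_q\|u_0\|_{L^q}^{-2/(1-3/q)}$ (the exponent is dictated by scaling). Now for $t\in[0,T^\star[$, the parabolic smoothing of Step~1 places $v(t)$ in every $L^q$; applying the $L^q$-theory with datum $v(t)$ and using weak-strong uniqueness to identify the resulting flow with $v$ on the overlap, one obtains
\[
T^\star-t\geq c_q\,\|v(t)\|_{L^q}^{-\frac{2}{1-3/q}},
\]
which rearranges to the announced lower bound.

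The routine parts are the product law and the maximal regularity estimates, which are purely Fourier analytic. The two genuinely subtle points are: in Step~1, continuity in time with values in the critical space $\dot H^{1/2}$, which is not automatic for the bilinear term and is typically handled by a density argument together with a Besov refinement of $X_T$; and, in Step~2, the identification of the $L^q$-solution issued from $v(t)$ with the Fujita-Kato solution, which requires a weak-strong type uniqueness inside a class large enough to contain both constructions.
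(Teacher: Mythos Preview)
The paper does not prove Theorem~\ref{fujitakatotheo}: it is quoted as the classical Fujita--Kato result and attributed to~\cite{fujitakato}, with no argument given. So there is no ``paper's own proof'' to compare against.

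Your sketch is the standard route to this theorem and is essentially correct. The fixed point in $X_T=\cC([0,T];\dot H^{1/2})\cap L^2([0,T];\dot H^{3/2})$ with the bilinear estimate coming from $\dot H^{1/2}\cdot\dot H^{1/2}\hookrightarrow \dot H^{-1/2}$ and heat maximal regularity is exactly how the Fujita--Kato argument is usually presented in modern treatments (see for instance\ccite{BCD}). The blow-up lower bound via the scaling of the $L^q$ lifespan is also the canonical argument and your exponents are right. Two small remarks: for the weak--strong identification step you invoke in Step~2, you do not need anything heavy here since both solutions live in $\cC([t,T];\dot H^{1/2})\cap L^2([t,T];\dot H^{3/2})$ on a common interval, and uniqueness in that class is part of the fixed point package itself; and the continuity in $\dot H^{1/2}$ of the bilinear term can be obtained directly from the $L^4_T(\dot H^1)$ interpolation norm rather than passing through a Besov refinement.
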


Let us point out  that the above necessary condition for blow up
implies that \beq \label{blowupFond} T^\star<\infty\Longrightarrow
\int_0^{T^\star} \|v(t)\|_{L^q} ^pdt =\infty\with \frac 2 p+\frac 3
q=1\andf p<\infty. \eeq Let us mention that it is possible to prove
this theorem without using the energy estimate and this theorem is
true for a large class of systems which have the same scaling as the
incompressible Navier-Stokes system.

Using results related to the energy estimate, L. Iskauriaza,  G. A.
Ser\"egin and V. Sverak proved in 2003 the end point case
of\refeq{blowupFond}  when~$p$ is infinite (see\ccite {ISS}). This
remarkable result has been extended to Besov space with negative
index (see\ccite{cheminplanchon}). Let us also mention a blow up
criteria  proposed by Beir$\tilde{\rm a}$o da Veiga \cite{Beirao},
which states that if the maximal time ~$T^\star$ of existence of a
regular solution $v$ to $(NS)$  is finite, then we have \beq
\label{blowupBeiraVega}
 \int_0^{T^\star} \|\na v(t)\|_{L^q} ^p dt =\infty \with
\f2p+\f3q= 2\quad\mbox{for}\quad q\geq\f32\,\cdotp \eeq Let us
observe that because of the fact that homogeneous bounded Fourier
multipliers  maps~$L^p$ into~$L^p$, this criteria is equivalent,
for~$q$ is finite, to \beq \label{blowupBeiraVegabis}
 \int_0^{T^\star} \|\Om(t)\|_{L^q} ^p dt =\infty \quad\hbox{where}\quad \Om\eqdefa \nabla\times v.
\eeq In this case when~$q$ is infinite, this criteria is the
classical Beale-Kato-Majda theorem (see \cite{BKM}) which is in fact
a result about Euler equation and where the viscosity plays no role.

\medbreak In the present paper, we want to establish necessary
conditions for breakdown of  regularity of  solutions to $(NS)$
given by Theorem\refer{fujitakatotheo} in term of the scaling
invariant norms of one component of the velocity field. Because we
shall use  the $L^{\frac 32}$ norm of the vorticity, we work with
solution given by the following theorem, which are a little bit more
regular than that given by Theorem \ref{fujitakatotheo}.
\begin{thm}
\label{existenceomegaL3/2}
{\sl Let us consider an initial data
$v_0$ with vorticity~$\Om_0=\na\times v_0$ in~$L^{\frac 3 2}$. Then
 a unique maximal solution~$v$ of~$(NS)$ exists in the
 space~$ C([0,T^\ast[;H ^{\f12})\cap
L^2_{\rm loc}([0,T^\star[;H ^{\frac32})$ for some positive time
$T^\ast,$ and the vorticity ~$\Om=\na\times v$ is continuous on
$[0,T^\ast[$ with value in~$L^{\frac 32 }$ and~$\Om$ satisfies
$$
|\nabla \Om|\, |\Om|^{-\frac14} \in L^2_{\rm loc}([0,T^\star[; L^2).
$$
}
\end{thm}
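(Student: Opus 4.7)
\medbreak

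\emph{Step 1: reduction to Fujita--Kato.} Since $v_0$ is divergence free, the Biot--Savart formula gives $v_0=(-\D)^{-1}\na\times\Om_0$, so $\|v_0\|_{H^{1/2}}\lesssim\|\Om_0\|_{H^{-1/2}}$; the Sobolev embedding $L^{3/2}(\R^3)\hookrightarrow H^{-1/2}(\R^3)$ (dual of $H^{1/2}\hookrightarrow L^3$) yields $v_0\in H^{1/2}$. Theorem~\ref{fujitakatotheo} then produces the unique maximal Fujita--Kato solution $v\in C([0,T^\star[;H^{1/2})\cap L^2_{\rm loc}([0,T^\star[;H^{3/2})$. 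Only the propagation of $\Om$ in $L^{3/2}$ and the extra weighted control of $|\na\Om|\,|\Om|^{-1/4}$ remain to be established.

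\medbreak

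\emph{Step 2: the $L^{3/2}$ vorticity identity.} Applying $\na\times$ to $(NS)$ gives $\p_t\Om+v\cdot\na\Om-\D\Om=\Om\cdot\na v$. Testing against $|\Om|^{-1/2}\Om$, the transport term vanishes because $\dive v=0$ yields $\int v\cdot\na\Om\cdot\Om\,|\Om|^{-1/2}\,dx=\f{2}{3}\int v\cdot\na|\Om|^{3/2}\,dx=0$. For the dissipation, the pointwise bound $(\Om\cdot\p_i\Om)^2\le|\Om|^2|\p_i\Om|^2$ used in the expansion $\p_i\Om\cdot\p_i(\Om|\Om|^{-1/2})=|\p_i\Om|^2|\Om|^{-1/2}-\f12|\Om|^{-5/2}(\Om\cdot\p_i\Om)^2$ gives
$$-\int_{\R^3}\D\Om\cdot\Om\,|\Om|^{-1/2}\,dx\;\ge\;\f12\int_{\R^3}|\na\Om|^2|\Om|^{-1/2}\,dx.$$
Setting $X(t)\eqdef\bigl\| |\na\Om|\,|\Om|^{-1/4}\bigr\|_{L^2}^{2}$, I obtain
$$\f{2}{3}\f{d}{dt}\|\Om\|_{L^{3/2}}^{3/2}+\f12 X(t)\;\le\;\int_{\R^3}|\na v|\,|\Om|^{3/2}\,dx.$$

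\medbreak

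\emph{Step 3: closing by Gr\"onwall.} By H\"older, the right-hand side is controlled by $\|\na v\|_{L^3}\|\Om\|_{L^{9/4}}^{3/2}$. The interpolation $\|\Om\|_{L^{9/4}}^{3/2}\le\|\Om\|_{L^{3/2}}^{3/4}\|\Om\|_{L^{9/2}}^{3/4}$ together with the Sobolev embedding $H^{1}(\R^3)\hookrightarrow L^6(\R^3)$ applied to $|\Om|^{3/4}$ gives $\|\Om\|_{L^{9/2}}^{3/4}=\bigl\||\Om|^{3/4}\bigr\|_{L^6}\lesssim X^{1/2}$. Young's inequality then absorbs a small portion of $X$ into the dissipation, leaving
$$\f{d}{dt}\|\Om\|_{L^{3/2}}^{3/2}+c\,X(t)\;\le\;C\,\|\na v(t)\|_{L^3}^{2}\,\|\Om(t)\|_{L^{3/2}}^{3/2}.$$
Because $v\in L^2_{\rm loc}([0,T^\star[;H^{3/2})$ and $H^{1/2}\hookrightarrow L^3$, the weight $\|\na v\|_{L^3}^{2}$ lies in $L^1_{\rm loc}([0,T^\star[)$; Gr\"onwall propagates $\|\Om\|_{L^{3/2}}$ up to $T^\star$ and simultaneously delivers $|\na\Om|\,|\Om|^{-1/4}\in L^2_{\rm loc}([0,T^\star[;L^2)$. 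Strong continuity of $\Om$ in $L^{3/2}$ then follows in the standard way from weak continuity, read off from the equation, together with continuity of the norm.

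\medbreak

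\emph{Main difficulty.} The weight $|\Om|^{-1/2}$ in the test function is singular where $\Om$ vanishes, so the $L^{3/2}$ energy identity of Step~2 is not formally licit. I would therefore carry it out on the regularization $(\eta+|\Om|^2)^{-1/4}\Om$, coupled with a Friedrichs truncation of $(NS)$ in which everything is smooth, verify the identities termwise, and recover the stated inequalities as $\eta\to 0$ through Fatou and lower semicontinuity. Making the interpolation/Sobolev step of Step~3 pass consistently to that limit is the delicate point of the argument.
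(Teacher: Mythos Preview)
Your proof is correct and follows essentially the same approach as the paper: reduce to Fujita--Kato via Biot--Savart and the dual Sobolev embedding, then perform the $L^{3/2}$ energy estimate on the vorticity equation and close by Gr\"onwall with weight $\|\nabla v\|_{L^3}^2\sim\|\nabla v\|_{H^{1/2}}^2\in L^1_{\rm loc}$. The only minor difference is that the paper handles the stretching term by $\|\Om\cdot\nabla v\|_{L^{3/2}}\|\Om\|_{L^{3/2}}^{1/2}\lesssim\|\nabla v\|_{H^{1/2}}^2\|\Om\|_{L^{3/2}}^{1/2}$ (using $\|\Om\|_{L^3}\lesssim\|\nabla v\|_{H^{1/2}}$) and then splits via a convexity inequality, whereas you interpolate $\|\Om\|_{L^{9/4}}$ and absorb a piece of $X$ into the dissipation; both routes lead to the same Gr\"onwall structure.
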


This theorem is classical. For the reader's convenience, we prove it
in the third section where we insist on the importance
of~$L^{\frac32}$ energy estimate for the vorticity.

\medbreak
The main theorem of this paper is the following.
\begin{thm}
\label{theoxplosaniscaling} {\sl We consider a maximal solution~$v$  of
$(NS)$ given by Theorem \ref{existenceomegaL3/2}. Let ~$p$ be
in~$]4,6[,$ ~$e$ a unit vector of~$\R^3,$ and
 $v_e\eqdefa v\cdot e$. Then  if  $T^\star<\infty,$ we have
\beq \label{k.1}
\int_0^{T^\star}\|v_e(t)\|_{\dH^{\f12+\f2p}}^p\,dt=\infty. \eeq }
\end{thm}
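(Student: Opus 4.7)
The plan is to argue by contradiction: suppose $\int_0^{T^\star}\|v_e(t)\|_{\dH^{\f12+\f2p}}^p\,dt<\infty$ and show that $\|\Om(t)\|_{L^{3/2}}$ stays bounded as $t\uparrow T^\star$. Combined with Theorem\refer{existenceomegaL3/2} and the blow-up criterion recorded in Theorem\refer{fujitakatotheo} (which, together with the Biot--Savart law, a bound on $\|\Om\|_{L^{3/2}}$ controls $\|v\|_{L^3}$), this would contradict the maximality of $T^\star$. Since $(NS)$ is rotationally invariant, we may assume $e=e_3$ so that $v_e=v^3$.

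The natural tool is the vorticity equation $\pa_t \Om + v\cdot\nabla \Om -\D\Om = \Om\cdot\nabla v$. Following the $L^p$-energy philosophy highlighted in the introduction, we test against $|\Om|^{-1/2}\Om$. Because $v$ is divergence-free the transport term drops out, and for the diffusion we get a dissipation of the form $\bigl\||\nabla \Om|\,|\Om|^{-1/4}\bigr\|_{L^2}^2$ which is consistent with the regularity already provided by Theorem\refer{existenceomegaL3/2}. This yields
\beq\label{plan.vort}
\f{d}{dt}\|\Om(t)\|_{L^{3/2}}^{3/2}+ c\,\bigl\||\nabla\Om|\,|\Om|^{-1/4}\bigr\|_{L^2}^2 \,\le\, C\int_{\R^3}|\nabla v|\,|\Om|^{3/2}\,dx,
\eeq
and the one-component hypothesis must now be injected in the stretching integral on the right.

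The main structural idea is that $\nabla v$ can, up to horizontal derivatives that are absorbable by the dissipation, be expressed in terms of $v^3$. Concretely, the divergence-free condition gives $\pa_3 v^3=-\nabla_{\rm h}\cdot v^{\rm h}$, and the Biot--Savart law lets us split $v^{\rm h}$ into a part reconstructed from the vorticity and a part carrying $\pa_3 v^3$, i.e.\ horizontal derivatives of horizontal velocity. Thus the stretching term $\int|\nabla v|\,|\Om|^{3/2}dx$ can be decomposed into (i) contributions of the form $\pa_{\rm h} v^3 \cdot \Om\,|\Om|^{-1/2}\Om$, to be bounded by the one-component norm, and (ii) contributions that reduce to $\int|\Om|\,|\Om|^{3/2}dx$, controllable by the dissipation via Sobolev embedding applied to $|\Om|^{3/4}$. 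Using anisotropic Littlewood--Paley analysis (horizontal and vertical Bony decompositions together with anisotropic Bernstein inequalities), we expect to reach an inequality of the form
\beno
\int|\nabla v|\,|\Om|^{3/2}\,dx\,\lesssim\,\|v^3\|_{\dH^{\f12+\f2p}}\,\bigl\||\nabla\Om|\,|\Om|^{-1/4}\bigr\|_{L^2}^{\al}\,\|\Om\|_{L^{3/2}}^{\b}+\text{(absorbable terms)},
\eeno
with exponents $\al,\b$ fixed by scaling. The range $p\in]4,6[$ is precisely that in which $\al<2$ and $\b>0$, so Young's inequality absorbs the dissipation into the left-hand side of\refeq{plan.vort} and leaves a time-integrable factor $\|v^3\|_{\dH^{\f12+\f2p}}^{p}$ multiplying $\|\Om\|_{L^{3/2}}^{3/2}$.

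The hardest step is the trilinear anisotropic estimate above: one really needs to extract the full component $v^3$, at the exact critical regularity $\f12+\f2p$, and to exchange the missing vertical smoothness for a fractional power of the dissipation norm $\bigl\||\nabla\Om|\,|\Om|^{-1/4}\bigr\|_{L^2}$ that stays strictly below $2$. This calls for a careful commutator analysis in the anisotropic Besov framework and is where the peculiar endpoints $p=4$ and $p=6$ obstruct a direct argument. Once this trilinear bound is established, integrating in time the resulting differential inequality
\beno
\f{d}{dt}\|\Om(t)\|_{L^{3/2}}^{3/2}+\f{c}{2}\bigl\||\nabla\Om|\,|\Om|^{-1/4}\bigr\|_{L^2}^2\,\le\, C\,\|v^3(t)\|_{\dH^{\f12+\f2p}}^{p}\,\|\Om(t)\|_{L^{3/2}}^{3/2}+\text{l.o.t.}
\eeno
and applying Gr\"onwall's lemma together with the assumed finiteness of $\int_0^{T^\star}\|v^3\|_{\dH^{\f12+\f2p}}^p\,dt$ gives a uniform bound on $\|\Om(t)\|_{L^{3/2}}$ on $[0,T^\star[$, which closes the contradiction.
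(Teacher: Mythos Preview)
Your scheme has a genuine gap at the step you flag as ``hardest.'' In the $L^{3/2}$ energy estimate on the \emph{full} vorticity $\Om$, the stretching term $\Om\cdot\nabla v$ contains contributions in which $\nabla v$ is reconstructed from $\Om$ itself via Biot--Savart and carries no factor of $v^3$ whatsoever. Your item (ii) proposes to absorb these by the dissipation, but the scaling is exactly critical:
\[
\int_{\R^3}|\Om|^{5/2}\,dx=\bigl\||\Om|^{3/4}\bigr\|_{L^{10/3}}^{10/3}
\lesssim \bigl\||\Om|^{3/4}\bigr\|_{L^2}^{4/3}\,\bigl\|\nabla|\Om|^{3/4}\bigr\|_{L^2}^{2},
\]
so the exponent on the dissipation is \emph{equal} to $2$, not strictly below it, and Young's inequality cannot close. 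No amount of anisotropic commutator work changes this, because these terms simply do not see $v^3$.

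The paper avoids this obstacle by \emph{not} working with $\Om$ directly. It rewrites $(NS)$ as a coupled system for $\om=\partial_1v^2-\partial_2v^1$ and $\partial_3v^3$. In the $\om$--equation the only $\om$--quadratic piece is the transport $v^{\rm h}_{\rm curl}\cdot\nabla_{\rm h}\om$, which vanishes in the $L^{3/2}$ energy; the remaining right-hand side is linear in $\om$ (with a $v^3$ factor) plus a forcing by $\partial_3^2v^3$. The latter is then controlled by a \emph{second} energy estimate on $\partial_3v^3$ in an anisotropic Hilbert space $\cH_\theta$, where the single $\om$--quadratic source $\partial_3^2\D^{-1}\bigl(\sum_{\ell,m=1}^2\partial_\ell v^m_{\rm curl}\partial_m v^\ell_{\rm curl}\bigr)$ is paired against $\partial_3v^3$ and thereby acquires the needed factor $\|v^3\|_{H^{\frac12+\frac2p}}$. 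A coupled Gronwall argument then closes. Finally, bounded $\|\Om\|_{L^{3/2}}$ only gives $\|v\|_{L^3}$ bounded, which is \emph{not} covered by the $q>3$ statement of Theorem\refer{fujitakatotheo}; the paper instead establishes an endpoint Besov blow-up criterion (Theorem\refer{blowupBesovendpoint}) and feeds the a~priori bounds on $\om_{3/4}$ and $\partial_3v^3$ into it. Your outline is missing both the $(\om,\partial_3v^3)$ reformulation and this endpoint criterion.
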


 Let us remark  that the quantity~$\ds \int_0^{T}\|v_e(t)\|_{\dH^{\f12+\f2p}}^p\,dt$ is
scaling invariant. Moreover, it  gives a necessary blow up condition
which involves only a scaling invariant norm to  one component of
the velocity. Or equivalently, it claims that if the maximal time of
existence~$T^\star$ is finite, $v$ blows up in any direction and thus is in some sense isotropic.

\medbreak Let us mention that  I. Kukavica and M.  Ziane proved
in\ccite{KZ2} further that
$$
T^\star<\infty\Longrightarrow  \int_0^{T^\star}
\|\p_3v(t,\cdot)\|^p_{L^q}dt=\infty \with \f2p+\frac 3 q= 2\andf
q\in [9/4,3]
$$
We notice that all these criteria concern scaling invariant norm.

\medbreak Recently, a lot of works (see \cite{CT1, CT,H,  KZ, NP,
NNP, PP, P, SK, ZP}) established constitution of the type
$$
 \int_0^{T^\star} \|v^3(t,\cdot)\|^p_{L^q}dt=\infty \quad\hbox{or}\quad
 \int_0^{T^\star} \|\partial_j v^3(t,\cdot)\|^p_{L^q}dt=\infty
 $$
 with conditions on~$p$ and~$q$ which make these quantities not scaling invariant.

\setcounter{equation}{0}
\section{Ideas and structure of the proof}
First of all, let us  remark that it makes no restriction to assume
that the unit vector~$e$ is the vertical vector~$(0,0,1)$. The first
idea of the present work  consists in  writing the incompressible
homogeneous Navier-Stokes  system in terms of two unknowns:
\begin{itemize}
\item
the third component of the vorticity~$\Om$, which we denote by
$$
\om = \partial_1v^2-\partial_2v^1
$$
and which can be understood as the 2D vorticity for the vector field~$v^{\rm h} \eqdefa (v^1,v^2)$,

\item the quantity~$\partial_3v^3$ which is~$-\dive_hv^{\rm h}=- \partial_1v^1-\partial_2v^2$ because~$v$ is divergence free.
\end{itemize}
 Immediate computations gives
$$
(\wt {NS})
\quad\left\{
\begin{array}{c}
\partial_t\om+v\cdot\nabla\om -\D\om= \partial_3v^3\om +\partial_2v^3\partial_3v^1-\partial_1v^3\partial_3v^2\\
\partial_t \partial_3v^3 +v\cdot\nabla\partial_3v^3-\D\partial_3v^3+\partial_3v\cdot\nabla v^3
=-\partial_3^2\D^{-1} \Bigl(\ds\sum_{\ell,m=1}^3 \partial_\ell
v^m\partial_mv^\ell\Bigr).
\end{array}
\right.
$$
Keeping in mind that we control~$v^3$ in the
norm~$L^p_T\bigl(H^{\frac12+ \frac 2p}\bigr)$ with $p$ greater
than~$4$, which implies that the order of regularity in space
variables is less than~$1$. Let us analyze this system. We first
introduce the notations \beq \label{a.1} \nabla_{\rm
h}^\perp=(-\p_2,\p_1),\quad\D_{\rm h}=\p_1^2+\p_2^2,\quad \vcurl
\eqdefa\nablah^\perp \D_{\rm h}^{-1} \om \andf \vdiv\eqdefa
-\nablah\D_{\rm h}^{-1}
\partial_3v^3.
 \eeq
 Then we have, using the Biot-Savart's law in the horizontal variables
\beq
\label{a.1wrt} v^{\rm h}=\vcurl+\vdiv.
 \eeq
Thus the righthand side term of the equation on~$\om$ in $(\wt{NS})$
contains terms which are linear in~$\om$, namely
$$
\partial_3v^3\om +\partial_2v^3\partial_3v_{\rm curl}^1-\partial_1v^3\partial_3v_{\rm
curl}^2,
$$
and a term that appears as a forcing term, namely
$$
\partial_2v^3\partial_3v_{\rm div}^1-\partial_1v^3\partial_3v_{\rm
div}^2.
$$
The only quadratic term  in~$\om$ is~$\vcurl\cdot\nablah\om$. A way to get rid of it is to
 use an energy type estimate and the divergence free condition on~$v$.
 As we want to work only with scaling invariant norms, the only way is to perform a
  $L^{\frac32}$ energy estimate in the equation on~$\om$.   Then it seems reasonable
  to control ~$\om$ using some norm on~$v^3$. Unfortunately,
     as shown by the forthcoming Proposition\refer{inegfondvroticity2D3D},
   we need higher order regularity  on~$v^3$. This leads to investigate the second equation of~$(\wt {NS}),$ which is
$$
 \partial_t \partial_3v^3 +v\cdot\nabla\partial_3v^3-\D\partial_3v^3+\partial_3v\cdot\nabla v^3
=-\partial_3^2\D^{-1} \Bigl(\ds\sum_{\ell,m=1}^3 \partial_\ell
v^m\partial_mv^\ell\Bigr).
$$
The main feature of this equation is that it contains only one
quadratic term with respect to~$\om$, namely the term
$$
-\partial_3^2\D^{-1} \Bigl(\ds\sum_{\ell,m=1}^2 \partial_\ell
v_{\rm curl}^m\partial_mv_{\rm curl}^\ell\Bigr)
$$
 A way to get rid of this term  is to perform an energy estimate on ~$\partial_3v^3$, namely an estimate on
 $$
\|\partial_3 v^3(t)\|_{\cH}
 $$
 for an adapted Hilbert space $\cH.$  Indeed, we  hope that if we control~$v^3,$ we can  control
terms of the type
$$
\bigl(\partial_3^2\D^{-1} (\partial_\ell
v_{\rm curl}^m\partial_mv_{\rm curl}^\ell)\big| \partial_3v^3\bigr)_{\cH}
$$
with quadratic terms in~$\om$ and thus it fits
with~$\|\partial_3v^3\|_{\cH}^2$ and we can hope to close the
estimate. Again here, the scaling  helps us for the choice of the
Hilbert space~$\cH$. The scaling  of~$\cH$ must be the scaling
of~$H^{-\frac12}$. Moreover, because of  the operator
$\nablah\D_{\rm h}^{-1}$, it is natural to measure horizontal
derivatives and vertical derivatives differently. This leads to the
following definition.

\begin{defi}
\label{def2.1ad}
{\sl
For~$(s,s')$ in~$\R^2$, $H^{s,s'} $ denotes the space of tempered distribution~$a$  such~that
 $$
\|a\|^2_{H^{s,s'}} \eqdefa \int_{\R^3} |\xi_{\rm
h}|^{2s}|\xi_3|^{2s'} |\wh a (\xi)|^2d\xi <\infty\with \xi_{\rm h}=(\xi_1,\xi_2).
 $$
 For~$\theta$ in~$]0,1/2[$, we denote~$\cH_\theta\eqdefa H^{-\frac12+\theta,-\theta}$.
 }
 \end{defi}

Let us first remark that
\beq
\label{isoanisoinclud}
\begin{split}
(s,s')\in[0,\infty[^2 &\Longrightarrow \|a\|_{H^{s,s'}}\leq \|a\|_{H^{s+s'}}\andf\\
 (s,s')\in]-\infty,0]^2 &\Longrightarrow \|a\|_{H^{s+s'}}\leq \|a\|_{H^{s,s'}}.
 \end{split}
\eeq We want to emphasize  the fact that  anisotropy in the
regularity is highly related to the divergence free condition.
Indeed, let us consider a divergence free vector field~$w=(w^{\rm
h}, w^3)$ in~$H^{\frac12}$ and let us
estimate~$\|\partial_3w^3\|_{\cH_\th}$. By definition of
the~$\cH_\th$  norm, we have
 $$
 \|\partial_3w^3\|^2_{\cH_\th} = A_L+A_H \with  A_L\eqdefa \int_{|\xi_{\rm h}|\leq |\xi_3|}
 |\xi_{\rm h}|^{-1+2\th} |\xi_3|^{-2\th} |\cF(\partial_3w^3)(\xi)|^2 d\xi.
 $$
 In the case when~$|\xi_{\rm h}|\geq |\xi_3|$, we write that
 \beno
 A_H  \leq  \int_{\R^3} |\xi_3|\,|\wh w^3(\xi)|^2d\xi\leq \|w^3\|^2_{H^{\frac12}}.
 \eeno
 In the case when~$|\xi_{\rm h}|\leq |\xi_3|$, we use divergence free condition and write that   \beno
 A_L & \leq  &  \int_{|\xi_{\rm h}|\leq |\xi_3|}
 |\xi_{\rm h}|^{-1} |\cF(\dive_{\rm h} w^{\rm h})(\xi)|^2 d\xi\\
 & \leq &  \int_{\R^3}
 |\xi_{\rm h}|\, | \widehat{w}^{\rm h}(\xi)|^2 d\xi = \|w^{\rm h}\|_{H^{\frac12}}^2.
 \eeno
 Thus  for any divergence  free vector field~$w$ in~$H^{\frac12}$,  we have
 \beq
 \label{initialdataHtheta}
 \|\partial_3w^3\|_{\cH_\th} \leq C \|w\|_{H^{\frac12}}.
 \eeq

The first step of the proof of Theorem\refer{theoxplosaniscaling} is the following
 proposition:
  \begin{prop}
 \label{inegfondvroticity2D3D}
 {\sl
 Let us consider a solution $v$ of~$(NS)$ given by Theorem\refer{existenceomegaL3/2}. Then for $p$ in~$ ]4,6[,$
 a constant~$C$ exists such that  for any~$t<T^\star$
\beq \label{a.10qp}
\begin{split}
\frac 23  \bigl\| \,\om_{\frac34}(t)\bigr\|_{L^2}^{2}
 &+\frac 59 \int_0^t\bigl\|\nabla \om_{\f34}(t')\bigr\|_{L^2}^2\,dt'  \leq \biggl(\frac 23  \| \,| \om_0|^{\frac34}\|_{L^2}^{2}
\\
&\qquad{} + \Bigl(\int_0^t \|\p^2_3v^3(t')
\|^{2}_{\cH_\th}\,dt'\Bigr)^{\frac {3}{4}}\biggr)\exp
\Bigl(C\int_0^t \|v^3(t')\|_{\dH^{\frac 12+\frac2 p}}^p dt'\Bigr).
\end{split}
\eeq Here and in all that follows,  for scalar function~$a$ and
for~$\al$ in the interval~$]0,1[$, we always  denote
\beq\label{a.10wl} a_\al \eqdefa  \frac a {|a|} |a|^\al, \eeq so
that in particular $\om_{\frac34}=\om|\om|^{-\f14}$ and
$\om_{\f12}=\om|\om|^{-\f12}.$}
\end{prop}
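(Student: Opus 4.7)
The plan is to run an $L^{3/2}$-type energy estimate on the $\om$-equation of $(\wt{NS})$, exploiting the identity $\|\om_{3/4}\|_{L^2}^2=\|\om\|_{L^{3/2}}^{3/2}$. I would test that equation against $\om|\om|^{-1/2}=\om_{1/2}$. Since $v$ is divergence-free, the transport term $\int v\cdot\nabla\om\,\om_{1/2}\,dx$ vanishes after integration by parts; the time derivative produces $\tfrac23\tfrac{d}{dt}\|\om_{3/4}\|_{L^2}^2$; and the chain-rule identity $\nabla\om_{3/4}=\tfrac34|\om|^{-1/4}\nabla\om$ turns the dissipation into exactly $\tfrac89\|\nabla\om_{3/4}\|_{L^2}^2$. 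The goal becomes to absorb a small fraction $\varepsilon\|\nabla\om_{3/4}\|_{L^2}^2$ of the dissipation (so that $\tfrac89-\varepsilon\ge\tfrac59$), and to move everything else to the right-hand side of a Gr\"onwall inequality.

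To dissect the source terms I split $v^{\rm h}=\vcurl+\vdiv$ with $\vdiv=-\nablah\D_{\rm h}^{-1}\p_3 v^3$. Using $\vcurl=\nablah^\perp\D_{\rm h}^{-1}\om$, the triple $\p_3 v^3\,\om+\p_2 v^3\,\p_3 v^1-\p_1 v^3\,\p_3 v^2$ decomposes into a part linear in $\om$, namely $\p_3 v^3\,\om-\nablah v^3\cdot\nablah\D_{\rm h}^{-1}\p_3\om$, and a forcing term of the form $\nablah^\perp v^3\cdot\nablah\D_{\rm h}^{-1}\p_3^2 v^3$ that feels $\p_3^2 v^3$ directly. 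For the scalar linear piece $\int \p_3 v^3\,|\om_{3/4}|^2\,dx$ my plan is to apply H\"older with the Sobolev embedding $\dot H^{2/p-1/2}\hookrightarrow L^{3p/(2p-2)}$ (valid for $p\in\,]4,6[$), then Gagliardo--Nirenberg on $\om_{3/4}$ in $L^{6p/(p+2)}$, then Young's inequality with conjugate exponents $(p,p/(p-1))$, obtaining a bound of the form $\varepsilon\|\nabla\om_{3/4}\|_{L^2}^2+C\|v^3\|_{\dot H^{1/2+2/p}}^p\|\om_{3/4}\|_{L^2}^2$. The Riesz companion involving $\p_3\om$ reduces to the same form after using the pointwise identity $\p_3\om=\tfrac43|\om_{3/4}|^{1/3}\p_3\om_{3/4}$ together with the $L^2$-boundedness of horizontal Riesz transforms.

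The delicate piece is the forcing, and this is where the anisotropic space $\cH_\th$ enters through duality. I would bound $\int\nablah^\perp v^3\cdot\nablah\D_{\rm h}^{-1}\p_3^2 v^3\,\om_{1/2}\,dx$ by $\|\p_3^2 v^3\|_{\cH_\th}$ multiplied by the $H^{1/2-\th,\th}$ norm of the product $\nablah v^3\cdot\om_{1/2}$. This product norm would then be handled by an anisotropic Littlewood--Paley paraproduct decomposition that distributes the $\tfrac12-\th$ horizontal derivatives and $\th$ vertical derivatives between $\nablah v^3$ (controlled by $\|v^3\|_{\dot H^{1/2+2/p}}$) and the non-smooth factor $\om_{1/2}=|\om_{3/4}|^{-1/3}\om_{3/4}$ (controlled, via Sobolev interpolation applied to $\om_{3/4}$, by powers of $\|\om_{3/4}\|_{L^2}$ and $\|\nabla\om_{3/4}\|_{L^2}$). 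After Young in space and H\"older in time with exponents chosen so that $\|\p_3^2 v^3\|_{\cH_\th}^2$ ends up integrated to the $3/4$-power, I expect a contribution of the form $\bigl(\int_0^t\|\p_3^2 v^3\|_{\cH_\th}^2\,dt'\bigr)^{3/4}+\varepsilon\|\nabla\om_{3/4}\|_{L^2}^2+C\|v^3\|_{\dot H^{1/2+2/p}}^p\|\om_{3/4}\|_{L^2}^2$, matching exactly the exponent structure on the right of \eqref{a.10qp}.

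Collecting the three contributions, absorbing every $\varepsilon\|\nabla\om_{3/4}\|_{L^2}^2$ into the $\tfrac89$ dissipation to leave $\tfrac59$, then integrating in time and invoking Gr\"onwall with multiplier $C\int_0^t\|v^3\|_{\dot H^{1/2+2/p}}^p\,dt'$ yields \eqref{a.10qp}. The principal obstacle, as I read the structure of the problem, is the forcing estimate: one must trade horizontal versus vertical regularity through the non-smooth factor $\om_{1/2}$ in the anisotropic Besov scale, and the exponents close precisely because of the restrictions $p\in\,]4,6[$ and $\th\in\,]0,1/2[$. The linear pieces are comparatively routine once the $L^{3/2}$ framework has been set up.
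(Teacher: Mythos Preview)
Your overall plan --- the $L^{3/2}$ energy estimate on the $\om$-equation, the splitting of the source via $v^{\rm h}=\vcurl+\vdiv$, absorption of $\varepsilon\|\nabla\om_{3/4}\|_{L^2}^2$ into the $8/9$ dissipation, and closure by Gronwall --- is exactly the paper's structure. The difficulties are in the estimates of two of the three pieces.

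For the scalar piece $\int\partial_3v^3|\om|^{3/2}\,dx$: the embedding $\dot H^{2/p-1/2}\hookrightarrow L^{3p/(2p-2)}$ you quote has a \emph{negative} index when $p>4$ and therefore goes the wrong way. The paper repairs this by integrating by parts first, writing $|F_1|\le\tfrac32\int|v^3|\,|\partial_3\om|\,|\om|^{1/2}\,dx$ and then using $v^3\in L^{3p/(p-2)}$ via $\dot H^{1/2+2/p}$ together with $\|\partial_3\om\|_{L^{3/2}}\lesssim\|\nabla\om_{3/4}\|_{L^2}\|\om_{3/4}\|_{L^2}^{1/3}$. This is a minor fix.

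The substantive gap is what you call the ``Riesz companion'', the term $\int\nabla_{\rm h}v^3\cdot\nabla_{\rm h}\Delta_{\rm h}^{-1}\partial_3\om\,\om_{1/2}\,dx$ coming from $\vcurl$. Your claim that it ``reduces to the same form'' via the pointwise identity $\partial_3\om=\tfrac43|\om_{3/4}|^{1/3}\partial_3\om_{3/4}$ and $L^2$-boundedness of Riesz transforms does not work: the operator $\nabla_{\rm h}\Delta_{\rm h}^{-1}$ is of order $-1$ (it is not a Riesz transform), it does not commute with the pointwise factor $|\om_{3/4}|^{1/3}$, and a direct H\"older estimate fails because the smoothing of $\nabla_{\rm h}\Delta_{\rm h}^{-1}$ is purely horizontal while your control of $\om_{1/2}$ is isotropic. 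This term is \emph{not} more routine than the forcing term; in the paper both are handled by the same bilinear lemma
\[
\Bigl|\int_{\R^3}\partial_{\rm h}\Delta_{\rm h}^{-1}f\,\partial_{\rm h}a\,\om_{1/2}\,dx\Bigr|
\lesssim \|f\|_{X}\,\|a\|_{\dot H^{3/2-2\sigma/3}}\,\bigl\|\om_{3/4}\bigr\|_{\dot H^{\sigma}}^{2/3},
\]
with $X=L^{3/2}$ and $f=\partial_3\om$ for the $\vcurl$ term, and $X=\cH_\theta$ and $f=\partial_3^2v^3$ for the forcing term. The proof of this lemma requires two ingredients absent from your sketch: a Bony decomposition of $\partial_{\rm h}a\,\om_{1/2}$ that isolates $\partial_{\rm h}T(\om_{1/2},a)$ (paired against the order-$0$ multiplier $\partial_{\rm h}^2\Delta_{\rm h}^{-1}f$) and places the remainder in the anisotropic space $L^{3}_{\rm v}(L^{6/5}_{\rm h})$ to match $\partial_{\rm h}\Delta_{\rm h}^{-1}f\in L^{3/2}_{\rm v}(L^{6}_{\rm h})$; and the H\"older-composition estimate $\|\om_{1/2}\|_{B^{2\sigma/3}_{3,3}}\lesssim\|\om_{3/4}\|_{\dot H^{\sigma}}^{2/3}$, obtained by applying $\|G(a)\|_{B^{\alpha s}_{p/\alpha,q/\alpha}}\lesssim\|G\|_{C^{\alpha}}\|a\|_{B^{s}_{p,q}}^{\alpha}$ to $G(r)=r|r|^{-1/3}\in C^{2/3}$. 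This is genuinely different from ``Sobolev interpolation applied to $\om_{3/4}$'', and it is what makes the exponents close for $p\in\,]4,6[$ after choosing $\sigma=3(\tfrac12-\tfrac1p)$.
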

Next we want to control~$\|\p^2_3v^3\|_{L^2_t(\cH_\theta)}$. As
already explained, a way to get rid of the only quadratic term
in~$\om$, namely
$$
-\partial_3^2\D^{-1}\Bigl( \sum_{\ell,m=1}^2 \partial_\ell v_{\rm curl}^m\partial_m v_{\rm curl}^\ell\Bigr).
$$
is to perform  an energy estimate for the norm~$\cH_\theta$.
\begin{prop}
 \label{estimadivhaniso}
 {\sl
 Let us consider a solution $v$ of~$(NS)$ given by Theorem\refer{existenceomegaL3/2}. For any~$p$ in~$]4,6[$ and $\th$ in
 $\bigl]\f12-\f2p,\f16\bigr[,$
 a constant~$C$ exists such that for any~$t<T^\star,$ we have
\beq
\label{b.4bqp}
\begin{split}
& \|\p_3v^3(t)\|_{\cH_\th}^2
 +\int_0^t\|\na\p_3v^3(t')\|_{\cH_\th}^2\,dt' \leq C \exp \Bigl(C\int_0^t
\|v^3(t')\|^{p}_{\dH^{\f12+\f2p}}\,dt' \Bigr)\\
 &\qquad\qquad{}\times\biggl(\|\Om_0\|_{L^{\f32}}^2
 + \int_0^t \Bigl(\|v^3(t')
\|_{\dH^{\f12+\f2p}}\bigl\|\om_{\f34}(t')\bigr\|_{L^2}^{2\bigl(\f13+\f1p\bigr)}\bigl\|\na\om_{\f34}(t')\bigr\|_{L^2}^{2\bigl(1-\f1p\bigr)}
\\
&\qquad\qquad\qquad\qquad\qquad{}+\|v^3(t')\|_{\dH^{\f12+\f2p}}^2\bigl\|\om_{\f34}(t')\bigr\|_{L^2}^{2\bigl(\f13+\f2p\bigr)}
\bigl\|\na\om_{\f34}(t')\bigr\|_{L^2}^{2\bigl(1-\f2p\bigr)}\Bigr)\,dt'\biggr).
\end{split}
\eeq
}
\end{prop}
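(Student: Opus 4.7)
The starting point is the equation for $\partial_3 v^3$ in $(\wt{NS})$. I would test this equation against $\partial_3 v^3$ in the scalar product of $\cH_\theta=H^{-\frac12+\theta,-\theta}$, i.e. apply the Fourier multiplier $|\xi_{\rm h}|^{-\frac12+\theta}|\xi_3|^{-\theta}$ twice, integrate in space, and use that $v$ is divergence free to handle the transport term $(v\cdot\nabla\partial_3v^3|\partial_3v^3)_{\cH_\theta}$ via commutators. This produces the identity
\begin{equation*}
\frac12\frac{d}{dt}\|\partial_3v^3\|_{\cH_\theta}^2+\|\nabla\partial_3v^3\|_{\cH_\theta}^2
= -\bigl([|D_{\rm h}|^{-\frac12+\theta}|D_3|^{-\theta},v\cdot\nabla]\partial_3v^3\big|\,|D_{\rm h}|^{-\frac12+\theta}|D_3|^{-\theta}\partial_3v^3\bigr)_{L^2}-(R_1+R_2),
\end{equation*}
with $R_1\eqdefa(\partial_3v\cdot\nabla v^3|\partial_3v^3)_{\cH_\theta}$ and $R_2\eqdefa\bigl(\partial_3^2\Delta^{-1}\sum_{\ell,m}\partial_\ell v^m\partial_mv^\ell\big|\partial_3v^3\bigr)_{\cH_\theta}$. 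All subsequent work consists in bounding these three terms by the right-hand side of \refeq{b.4bqp}.

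Next I would dissect $R_2$ using the decomposition $v^{\rm h}=\vcurl+\vdiv$ from \refeq{a.1wrt}. This splits the sum $\sum_{\ell,m=1}^3\partial_\ell v^m\partial_mv^\ell$ into (i) a purely horizontal ``curl$\times$curl'' piece $\sum_{\ell,m=1}^2\partial_\ell\vcurl^m\partial_m\vcurl^\ell$, (ii) pieces containing at least one factor of $\vdiv$ (hence a factor involving $\partial_3v^3$), and (iii) pieces involving $v^3$ or $\partial_3v^3$ directly. For the dangerous piece (i), using that $\vcurl$ is horizontally divergence free, the 2D null form identity rewrites it as a Jacobian $2(\partial_1\vcurl^1\partial_2\vcurl^2-\partial_1\vcurl^2\partial_2\vcurl^1)$, which is quadratic in first derivatives of $\vcurl=\nabla_{\rm h}^\perp\Delta_{\rm h}^{-1}\omega$, hence quadratic in $\omega$. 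The restriction $\theta\in]\tfrac12-\tfrac2p,\tfrac16[$ will be dictated by the anisotropic Bony paraproduct / Sobolev embeddings needed to estimate this piece in $H^{-\frac12+\theta,-\theta}$ by $\|\omega_{\frac34}\|_{L^2}^{2(\frac13+\frac1p)}\|\nabla\omega_{\frac34}\|_{L^2}^{2(1-\frac1p)}$ times a matching norm of $\partial_3v^3$; the identity $\omega=|\omega_{3/4}|^{-2/3}\omega_{3/4}\cdot|\omega_{3/4}|^{2/3}$ converts $L^q$ bounds on $\omega$ into bounds on $\omega_{3/4}$. Groups (ii)--(iii) and the term $R_1$, which always contain $v^3$ or $\partial_3v^3$, will be bounded by the two integrands on the right of \refeq{b.4bqp}: using product rules in anisotropic spaces, $\|v^3\|_{\dH^{\frac12+\frac2p}}$ provides one factor and either $\|\nabla\partial_3v^3\|_{\cH_\theta}$ (absorbed into the diffusion) or vorticity norms provide the rest.

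For the commutator piece arising from transport, I would write $v=\vcurl+\vdiv+v^3e_3$ and apply the anisotropic commutator estimate of Bony/Chemin type to each component: terms with $\vcurl$ are handled by horizontal Biot--Savart followed by interpolation into $\omega_{3/4}$ norms, while terms carrying $v^3$ or $\vdiv$ give back contributions of the form $\|v^3\|_{\dH^{\frac12+\frac2p}}^p\|\partial_3v^3\|_{\cH_\theta}^2$ (after Young's inequality), which are absorbed by Gr\"onwall, and of the form $\|v^3\|_{\dH^{\frac12+\frac2p}}\|\partial_3v^3\|_{\cH_\theta}\|\nabla\partial_3v^3\|_{\cH_\theta}$, which are absorbed by the diffusion. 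Summing all estimates, using Young's inequality to move every occurrence of $\|\nabla\partial_3v^3\|_{\cH_\theta}^2$ into the left-hand side, invoking \refeq{initialdataHtheta} with $w_0$ of regularity $H^{1/2}$ (which follows from $\Omega_0\in L^{3/2}$) to bound the initial value $\|\partial_3v^3(0)\|_{\cH_\theta}\lesssim\|\Omega_0\|_{L^{3/2}}$, and finally applying Gr\"onwall's lemma with weight $\exp(C\int_0^t\|v^3\|_{\dH^{\frac12+\frac2p}}^p)$, yields \refeq{b.4bqp}. The main obstacle is step two: extracting two powers of $\omega_{\frac34}$ from the null form $(i)$ in precisely the interpolated norm that will later match the right-hand side of \refeq{a.10qp} when the two propositions are combined; this forces the delicate choice of $\theta$ and is where anisotropic Littlewood--Paley analysis is essential.
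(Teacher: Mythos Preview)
Your overall plan is sound and would arrive at the right estimate, but two of your steps diverge from the paper's argument and make the proof more involved than necessary.

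First, the paper does \emph{not} handle the transport term $v\cdot\nabla\partial_3v^3$ by a commutator with the anisotropic multiplier. It simply keeps $Q_3(v,v)\eqdefa v\cdot\nabla\partial_3v^3$ as one of three nonlinear source terms (alongside $Q_1$, which regroups $(\partial_3v^3)^2$ and the purely horizontal $\sum_{\ell,m=1}^2\partial_\ell v^m\partial_m v^\ell$, and $Q_2$, the cross terms $\partial_3v^\ell\partial_\ell v^3$) and estimates the trilinear form $(Q_3\,|\,\partial_3v^3)_{\cH_\theta}$ directly. For $v^{\rm h}\cdot\nabla_{\rm h}\partial_3v^3$ it proves a product lemma (Lemma~\ref{estimdivhlemme2}) bounding $|(A(D)(v^\ell\partial_\ell\partial_3v^3)|\partial_3v^3)_{\cH_\theta}|$ in terms of $\|v^\ell\|_{(B^1_{2,1})_{\rm h}(B^{1/2-2/p}_{2,1})_{\rm v}}\|\nabla\partial_3v^3\|_{\cH_\theta}\|v^3\|_{H^{1/2+2/p}}$, and then converts that anisotropic norm of $v^{\rm h}$ into the $\omega_{3/4}$ and $\partial_3v^3$ quantities via Proposition~\ref{BiotSavartBesovaniso}. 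The vertical piece $v^3\partial_3^2v^3$ is handled by a direct anisotropic product estimate, and it is \emph{here}---not in the curl$\times$curl term---that the lower bound $\theta>\tfrac12-\tfrac2p$ is needed. Your commutator route is in principle viable, but it would require an anisotropic negative-order commutator lemma that the paper never states or uses; the direct estimate is shorter.

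Second, for the curl$\times$curl part of $Q_1$ you invoke the 2D null-form/Jacobian identity. This structure plays no role in the paper. Since $\partial_{\rm h}v^{\rm h}_{\rm curl}=\partial_{\rm h}\nabla_{\rm h}^\perp\Delta_{\rm h}^{-1}\omega$ and $\partial_{\rm h}v^{\rm h}_{\rm div}=-\partial_{\rm h}\nabla_{\rm h}\Delta_{\rm h}^{-1}\partial_3v^3$ are bounded Fourier multipliers of $\omega$ and $\partial_3v^3$ respectively, the paper's Lemma~\ref{estimdivhlemme1},
\[
\bigl|(A(D)(fg)\,|\,\partial_3v^3)_{\cH_\theta}\bigr|\lesssim
\|f\|_{H^{\theta,\frac12-\theta-\frac1p}}\|g\|_{H^{\theta,\frac12-\theta-\frac1p}}\|v^3\|_{H^{\frac12+\frac2p}}
\quad\bigl(0<\theta<\tfrac12-\tfrac1p\bigr),
\]
applied with $f,g\in\{\partial_{\rm h}v^{\rm h}_{\rm curl},\partial_{\rm h}v^{\rm h}_{\rm div},\partial_3v^3\}$, treats all of $Q_1$ at once. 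The $\omega_{3/4}$ exponents you quote then come from $\|\omega\|_{H^{\theta,\frac12-\theta-\frac1p}}\leq\|\omega\|_{H^{1/2-1/p}}\lesssim\|\omega_{3/4}\|_{L^2}^{\frac13+\frac1p}\|\nabla\omega_{3/4}\|_{L^2}^{1-\frac1p}$ (Lemma~\ref{BiotSavartomega}), and the analogous $\partial_3v^3$ term from $\|\partial_3v^3\|_{H^{\theta,\frac12-\theta-\frac1p}}\leq\|\partial_3v^3\|_{\cH_\theta}^{1/p}\|\nabla\partial_3v^3\|_{\cH_\theta}^{1-1/p}$. No Jacobian cancellation is needed: the $v^3$ factor arises, as you correctly identify, from the $\partial_3v^3$ slot of the $\cH_\theta$ inner product.
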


As aforementioned observation, the non-linear terms of the equation
on~$\partial_3v^3$ contains  quadratic terms  in~$\om$.
  In spite of that, the terms in~$\om$ and in~$\partial_3v^3$ have the same homogeneity in
   \eqref{b.4bqp}. Let us point out that this is also the case in the estimate of
Proposition\refer{inegfondvroticity2D3D}. This will allow us to
close the estimates using Gronwall type arguments. More precisely,
we have the following proposition.
\begin{prop}
\label{SophisticatedGronwall}
{\sl
 Let $v$ be the unique solution of $(NS)$ given by Theorem
\ref{existenceomegaL3/2}. Then for any~$p$ in~$ ]4,6[$ and $\th$ in
 $\bigl]\f12-\f2p,\f16\bigr[,$ a
constant~$C$ exists such that, for any~$t<T^\ast$, we have \beno
\bigl\|\om_{\frac34}(t)\bigr\|_{L^2}^{2\bigl(\f{p+3}3\bigr)}+\bigl\|\na\om_{\f34}\bigr\|_{L^2_t(L^2)}^{2\bigl(\f{p+3}3\bigr)}
 & \leq  & C \|\Om_0\|_{L^{\f32}}^{\f{p+3}2}\cE(t)\andf\\
 \|\p_3v^3(t)\|_{\cH_\th}^2
 +\|\na\p_3v^3(t')\|_{L^2_t(\cH_\th)}^2
 & \leq & \|\Om_0\|_{L^{\f32}}^2 \cE(t) \with\\
 \cE(t) & \eqdefa &  \exp\biggl(C\exp
\Bigl(C\int_0^t\|v^3(t)\|_{\dH^{\f12+\f2p}}^p\,dt'\Bigr)\biggr).
\eeno
}
\end{prop}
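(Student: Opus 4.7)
The plan is to close the coupled integral estimates of Propositions~\ref{inegfondvroticity2D3D} and~\ref{estimadivhaniso} via a Gronwall-type argument exploiting their matching homogeneity. First I introduce the shorthands $\mathcal{A}(t) := \|\om_{3/4}(t)\|_{L^2}^2 + \int_0^t \|\na\om_{3/4}\|_{L^2}^2\,dt'$, $\mathcal{D}(t) := \|\p_3 v^3(t)\|_{\cH_\th}^2 + \int_0^t\|\na\p_3 v^3\|_{\cH_\th}^2\,dt'$, and $F(t) := \exp\bigl(C\int_0^t \|v^3\|_{\dH^{\f12+\f2p}}^p\,dt'\bigr)$. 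Since $\int_0^t\|\p_3^2 v^3\|_{\cH_\th}^2\,dt' \leq \mathcal{D}(t)$ and $\||\om_0|^{3/4}\|_{L^2}^2 = \|\om_0\|_{L^{3/2}}^{3/2} \leq \|\Om_0\|_{L^{3/2}}^{3/2}$, Proposition~\ref{inegfondvroticity2D3D} reads $\mathcal{A}(t) \leq C F(t)\bigl(\|\Om_0\|_{L^{3/2}}^{3/2} + \mathcal{D}(t)^{3/4}\bigr)$; raising to the power $4/3$ and using $(a+b)^{4/3}\leq C(a^{4/3}+b^{4/3})$ gives the crucial linear-in-$\mathcal{D}$ bound $\mathcal{A}(t)^{4/3} \leq CF(t)^{4/3}\bigl(\|\Om_0\|_{L^{3/2}}^2 + \mathcal{D}(t)\bigr)$.

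Next I apply H\"older's inequality in time to the integrand on the right-hand side of Proposition~\ref{estimadivhaniso}, using the exponent triples $(p,\infty,\f{p}{p-1})$ and $(\f{p}{2},\infty,\f{p}{p-2})$ for the two source terms. The arithmetic coincidence $(\f13+\f1p) + \f{p-1}{p} = (\f13+\f2p) + \f{p-2}{p} = \f43$ collapses both terms to the same homogeneity, yielding
\[
\int_0^t J(t')\,dt' \leq C\,\mathcal{A}(t)^{4/3}\bigl(\Phi(t)^{1/p} + \Phi(t)^{2/p}\bigr),\qquad \Phi(t) := \int_0^t \|v^3\|_{\dH^{\f12+\f2p}}^p\,dt'.
\]
Substituting the linearised bound on $\mathcal{A}^{4/3}$ into Proposition~\ref{estimadivhaniso} gives the self-referential inequality
\[
\mathcal{D}(t) \leq CF(t)\|\Om_0\|_{L^{3/2}}^2 + CF(t)^{7/3}\bigl(\Phi^{1/p}+\Phi^{2/p}\bigr)\bigl(\|\Om_0\|_{L^{3/2}}^2 + \mathcal{D}(t)\bigr).
\]

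To close this I return to the pointwise differential inequality that integrates to Proposition~\ref{estimadivhaniso} and apply Gronwall's lemma with integrating factor $\exp\bigl(C\int_0^t\|v^3\|_{\dH^{\f12+\f2p}}^p\,(1+F(t')^{4/3})\,dt'\bigr)$. The decisive computation is $(F^{4/3})'(t) = \f{4C}{3}\|v^3\|_{\dH^{\f12+\f2p}}^p F(t)^{4/3}$, so that $\int_0^t\|v^3\|^p F^{4/3}\,dt' = \f{3}{4C}\bigl(F(t)^{4/3}-1\bigr)$; since $F(t)^{4/3} = \exp(\f{4C}{3}\Phi(t))$, the resulting Gronwall factor becomes exactly the double exponential $\cE(t) = \exp(C\exp(C\Phi(t)))$ of the statement, giving $\mathcal{D}(t) \leq \|\Om_0\|_{L^{3/2}}^2\cE(t)$. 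Feeding this back into Proposition~\ref{inegfondvroticity2D3D} raised to the power $(p+3)/3$, using $(\|\Om_0\|^{3/2}+\mathcal{D}^{3/4})^{(p+3)/3} \leq C(\|\Om_0\|^{(p+3)/2} + \mathcal{D}^{(p+3)/4})$, yields the claimed bound on $\mathcal{A}(t)^{(p+3)/3}$.

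The main technical obstacle will be closing the self-referential integral inequality, whose coefficient $CF(t)^{7/3}(\Phi^{1/p}+\Phi^{2/p})$ of $\mathcal{D}(t)$ can exceed $1$ as $\Phi$ grows, so direct iteration fails. Resolving this forces one to work at the differential rather than the integrated level, and the double exponential $\cE(t)$ emerges precisely from Gronwall integration against the weight $\|v^3\|^p F^{4/3}$ --- the combination of the $\|v^3\|^p$ factor arising naturally in the underlying differential inequality for $\mathcal{D}$ and the extra $F^{4/3}$ coming from Proposition~\ref{inegfondvroticity2D3D} raised to the power $4/3$.
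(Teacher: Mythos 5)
Your setup is sound and much of the arithmetic matches the paper's: the reduction $\||\om_0|^{3/4}\|_{L^2}^2=\|\om_0\|_{L^{3/2}}^{3/2}\leq\|\Om_0\|_{L^{3/2}}^{3/2}$, the H\"older-in-time exponent triples $(p,\infty,\f p{p-1})$ and $(\f p2,\infty,\f p{p-2})$, and the observation that both source terms of Proposition~\ref{estimadivhaniso} have total homogeneity $4/3$ in the pair $\bigl(\|\om_{3/4}\|_{L^\infty_t(L^2)}^2,\|\na\om_{3/4}\|_{L^2_t(L^2)}^2\bigr)$ are exactly the computations the paper performs. The gap is in the closing step. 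Having derived the self-referential inequality $\mathcal D(t)\leq CF\|\Om_0\|^2+CF^{7/3}(\Phi^{1/p}+\Phi^{2/p})(\|\Om_0\|^2+\mathcal D(t))$, you rightly note it cannot be iterated, and you propose to ``return to the pointwise differential inequality'' for $\|\p_3v^3(t)\|_{\cH_\th}^2$ and apply Gronwall with weight $\|v^3\|^p(1+F^{4/3})$. But the differential inequality \eqref{b.26} is not of Gronwall form with respect to $\mathcal D$: its source terms contain $\|\na\om_{3/4}(t)\|_{L^2}^{2(1-1/p)}$ and $\|\na\om_{3/4}(t)\|_{L^2}^{2(1-2/p)}$ \emph{pointwise in time}, and these are not bounded by any function of $\mathcal D(t)$ or of the data at time $t$ --- only their time integrals are controlled, and that control (via Proposition~\ref{inegfondvroticity2D3D}) involves the terminal value $\mathcal D(t)$ of the very unknown you are trying to bound. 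The proposed integrating factor therefore has nothing to act on; the inequality is non-local in time and Gronwall does not apply at either the differential or the integral level in the form you set up.

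The paper closes the loop in the opposite order, and the order is what makes it work. It substitutes the bound of Proposition~\ref{estimadivhaniso} for $\bigl(\int_0^t\|\p_3^2v^3\|_{\cH_\th}^2\bigr)^{3/4}$ into Proposition~\ref{inegfondvroticity2D3D}, and --- crucially --- does \emph{not} pull $\sup_{t'}\|\om_{3/4}(t')\|_{L^2}$ out of the time integrals. After H\"older in time it applies Young's inequality with constants tuned so that the factors $\bigl(\int_0^t\|\na\om_{3/4}\|_{L^2}^2\bigr)^{\f34(1-\f1p)}$ and $\bigl(\int_0^t\|\na\om_{3/4}\|_{L^2}^2\bigr)^{\f34(1-\f2p)}$ are each absorbed as $\f19\int_0^t\|\na\om_{3/4}\|_{L^2}^2$ into the coefficient $\f59$ on the left of \eqref{a.10qp}, leaving only $e(T)\bigl(\int_0^t\|v^3(t')\|_{\dH^{1/2+2/p}}^p\|\om_{3/4}(t')\|_{L^2}^{2(p+3)/3}dt'\bigr)^{3/(p+3)}$. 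Raising to the power $(p+3)/3$ puts this in genuine integral-Gronwall form for $y(t)=\|\om_{3/4}(t)\|_{L^2}^{2(p+3)/3}$ with kernel $e(T)\|v^3(t')\|_{\dH^{1/2+2/p}}^p$, which is where the double exponential $\cE$ actually comes from; only after the $\om$-estimate is closed is it fed back into Proposition~\ref{estimadivhaniso} to control $\mathcal D$. Your coarser bound $\int_0^tJ\,dt'\leq C\mathcal A(t)^{4/3}(\Phi^{1/p}+\Phi^{2/p})$ destroys both of these structures (the small-constant absorption into the left-hand side and the Gronwall kernel), which is precisely why your closure fails.
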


The proof of this proposition from
Propositions\refer{inegfondvroticity2D3D} and\refer{estimadivhaniso}
is the purpose of Section\refer{Gronwall+}. It consists in plugging
the estimate of Proposition\refer{estimadivhaniso}
 into the one of  Proposition\refer{inegfondvroticity2D3D} and making
careful use of H\"older and convexity inequalities.

\medbreak

Now in order to conclude the proof of Theorem\refer{theoxplosaniscaling}, we need to prove that the control of
$$ \|\om\|_{L^\infty_t(L^{\f32})},\quad
 \int_0^t\|\nabla \om_{\f34}(t')\|_{L^2}^2\,dt'\,,\ \int_0^t \|\p^2_3v^3(t')
\|^{2}_{\cH_\th}\,dt' \andf
\int_0^t\|v^3(t')\|_{\dH^{\frac12+\frac2p}}^pdt'
$$
prevents the solution $v$ of $(NS)$ from  blowing up. This will be
done through the following blow up result, which may have its own
interest. Before stating the theorem, let us recall the definition
of some class of Besov spaces.

\begin{defi}
\label{def2.2}
{\sl If~$\s$ is a positive real number, we define the space~$B^{-\s}_{\infty,\infty}$ as the space of
tempered distributions~$f$  such that
$$
\|f\|_{B^{-\s}_{\infty,\infty}} \eqdefa \sup_{t>0} t^{\frac \s 2} \|e^{t\D}f\|_{L^\infty} <\infty.
$$
For~$p$ in~$]1,\infty[$,  we shall use the notation~$\cB_p\eqdefa B^{-2+\frac 2p}_{\infty,\infty}$.
}
\end{defi}
These spaces are in some sense the largest ones which have  a fixed
scaling. Indeed,  let us consider any Banach space~$E$ which can be
continuously embedded  into the space of tempered
distribution~$\cS'(\R^3)$ such that
$$
\forall (\lam ,\vect a)\in\, ]0,\infty[\times \R^3\,,\ \lam^\s \|f(\lam\cdot+\vect a)\|_E\sim \|f\|_{E}.
$$
The first hypothesis on~$E$ implies that   a constant~$C$ exists such that
$$
\langle f,e^{-|\cdot|^2}\rangle  \leq C\|f\|_{E}.
$$
The scaling hypothesis on~$E$ implies, after a change of variables
in the left-hand side of the above inequality, that
$$
\forall t \in ]0,\infty[\,,\ t^{\frac \s 2} \|e^{t\D}f\|_{L^\infty} \leq C \|f\|_E.
$$
Then the following theorem can be understood as  an end point blow
up theorem  for the incompressible Navier-Stokes equation.

\begin{thm}
\label{blowupBesovendpoint}
{\sl
 Let $v$ be a solution of~$(NS)$  in the
space~$C([0,T^\star[;H^{\frac12})\cap
L^2_{\rm loc}([0,T^\star[;H^{\frac32})$. If~$T^\star$ is the
maximal time of existence and $T^\ast<\infty,$ then for any~$(p_{k,\ell})$ in~$]1,\infty[^9$, one has
$$
\sum_{1\leq k,\ell\leq3} \int_0^{T^\star} \|\partial_\ell
v^k(t)\|^{p_{k,\ell}}_{\cB_{p_{k,\ell}}} dt=\infty.
$$
}
\end{thm}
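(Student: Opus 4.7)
The plan is to argue by contradiction: assume $T^\star<\infty$ together with
\beq
\label{assumpBesovBup}
\cN(T^\star)\eqdefa \sum_{1\leq k,\ell\leq 3}\int_0^{T^\star}\|\p_\ell v^k(t)\|^{p_{k,\ell}}_{\cB_{p_{k,\ell}}}\,dt<\infty,
\eeq
and show that this forces $v$ to stay uniformly bounded in $\dH^{\f12}$ on $[0,T^\star[$, contradicting Theorem~\ref{fujitakatotheo}, which guarantees that $\|v(t)\|_{L^q}$ (hence $\|v(t)\|_{\dH^{\f12}}$) must blow up as $t\to T^\star$ whenever $T^\star$ is the maximal existence time.

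The cornerstone will be an $\dH^{\f12}$ energy estimate. Applying $(-\D)^{\f14}$ to $(NS)$, taking the $L^2$ scalar product with $(-\D)^{\f14}v$, and using $\dive v=0$ to eliminate the pressure and to move one derivative onto $v^k$, one arrives at
$$
\f12\f{d}{dt}\|v(t)\|_{\dH^{\f12}}^2+\|v(t)\|_{\dH^{\f32}}^2=-\sum_{k,\ell=1}^3\int_{\R^3}\p_\ell v^k\,v^\ell\,(-\D)^{\f12}v^k\,dx.
$$
For each of the nine nonlinear terms, setting $p=p_{k,\ell}$, the duality between $\cB_p=\dot B^{-2+\f2p}_{\infty,\infty}$ and $\dot B^{2-\f2p}_{1,1}$ yields
$$
\Bigl|\int_{\R^3}\p_\ell v^k\,v^\ell\,(-\D)^{\f12}v^k\,dx\Bigr|\leq\|\p_\ell v^k\|_{\cB_p}\,\|v^\ell\,(-\D)^{\f12}v^k\|_{\dot B^{2-\f2p}_{1,1}}.
$$

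The crucial step is the scaling-matched product estimate
\beq
\label{prodestim}
\|v^\ell\,(-\D)^{\f12}v^k\|_{\dot B^{2-\f2p}_{1,1}}\lesssim\|v\|^2_{\dH^{\f32-\f1p}}\lesssim\|v\|^{\f2p}_{\dH^{\f12}}\,\|v\|^{2-\f2p}_{\dH^{\f32}},
\eeq
whose exponents are dictated by the scaling of $\cB_p$. I would prove \eqref{prodestim} via Bony's decomposition
$$
v^\ell(-\D)^{\f12}v^k=T_{v^\ell}(-\D)^{\f12}v^k+T_{(-\D)^{\f12}v^k}v^\ell+R(v^\ell,(-\D)^{\f12}v^k),
$$
bounding each piece through Bernstein inequalities, the Sobolev embedding $\dH^{\f32}\hookrightarrow\dot B^{0}_{\infty,2}$, and the interpolation $\|v\|_{\dH^{\f32-\f1p}}\leq\|v\|^{\f1p}_{\dH^{\f12}}\|v\|^{1-\f1p}_{\dH^{\f32}}$.

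Feeding \eqref{prodestim} back and applying Young's inequality with exponents $p$ and $p/(p-1)$ to the factor $\|v\|^{2/p}_{\dH^{\f12}}\|v\|^{2-2/p}_{\dH^{\f32}}$ controls each of the nine terms by $\f1{18}\|v\|^2_{\dH^{\f32}}+C_p\|\p_\ell v^k\|^{p_{k,\ell}}_{\cB_{p_{k,\ell}}}\|v\|^2_{\dH^{\f12}}$. Summing, the dissipation absorbs the $\dH^{\f32}$ contributions and leaves
$$
\f{d}{dt}\|v(t)\|_{\dH^{\f12}}^2+\|v(t)\|_{\dH^{\f32}}^2\leq C\,f(t)\,\|v(t)\|_{\dH^{\f12}}^2\with f(t)\eqdefa\sum_{k,\ell}\|\p_\ell v^k(t)\|^{p_{k,\ell}}_{\cB_{p_{k,\ell}}},
$$
with $f\in L^1([0,T^\star])$ by \eqref{assumpBesovBup}. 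Gronwall's lemma then delivers $v\in L^\infty([0,T^\star[;\dH^{\f12})\cap L^2([0,T^\star[;\dH^{\f32})$, contradicting Theorem~\ref{fujitakatotheo}. The main obstacle is the paradifferential bound \eqref{prodestim}: because $\cB_p$ sits at negative regularity $-2+\f2p$, the product $v^\ell\,(-\D)^{\f12}v^k$ must jointly provide regularity $2-\f2p$ in an $L^1$-based space, forcing a careful separate treatment of the two paraproducts (handled by $\dH^{\f32}\hookrightarrow\dot B^0_{\infty,2}$) and of the remainder term, with the latter being responsible for the endpoint restriction $p>1$.
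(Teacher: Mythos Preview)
Your overall strategy—an $\dH^{\frac12}$ energy estimate, convexity inequality, Gronwall, and contradiction with Fujita--Kato—is exactly the paper's. The gap is in the bilinear estimate \eqref{prodestim}. As written it is false, and your sketch cannot rescue it: the obstruction is the paraproduct $T_{v^\ell}\bigl((-\D)^{\frac12}v^k\bigr)$. To land this in $\dot B^{2-\frac2p}_{1,1}$ you must bound $\|S_{j-1}v^\ell\,\D_j(-\D)^{\frac12}v^k\|_{L^1}$, which forces a H\"older split $L^{q_1}\times L^{q_2}$ with $q_2\geq2$ (Bernstein from $L^2$ goes only upward), hence $q_1\leq2$; but $\|S_{j-1}v^\ell\|_{L^{q_1}}$ for $q_1\leq2$ is uncontrolled by any homogeneous $\dH^s$ with $s>0$, so the low-frequency tail of $v^\ell$ diverges. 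The embedding $\dH^{\frac32}\hookrightarrow\dot B^0_{\infty,2}$ does not help either, since it only gives $\ell^2$ summability of $\|\D_{j''}v^\ell\|_{L^\infty}$, not the $\ell^1$ control needed to bound $\|S_{j-1}v^\ell\|_{L^\infty}$. In short, once you have moved all of $(-\D)^{\frac12}$ onto one factor and paired against $\p_\ell v^k$ by abstract duality, the transport-type cancellation is lost and the estimate simply fails.

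The paper avoids this by \emph{not} passing to the form $\int\p_\ell v^k\,v^\ell\,(-\D)^{\frac12}v^k\,dx$. It keeps the $\dH^{\frac12}$ inner product as $\sum_j 2^j\bigl(\D_j(v\cdot\nabla v)\,\big|\,\D_jv\bigr)_{L^2}$, applies Bony's decomposition to $v^\ell\p_\ell v^k$, and treats the dangerous piece $T(v^\ell,\p_\ell v^k)$ by a commutator argument (Lemma~\ref{lemtheoblowupendpoint}): one writes $\D_j T(v^\ell,\p_\ell v^k)=S_{j-1}v^\ell\,\p_\ell\D_j v^k+[\D_j,S_{j'-1}v^\ell]\p_\ell\D_{j'}v^k+\cdots$, and the main term disappears because $(S_{j-1}v\cdot\nabla\D_j v\,|\,\D_j v)_{L^2}=0$ by the divergence-free condition. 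The commutator remainders gain one derivative, which lets $\nabla v^\ell$ appear and be estimated in $\cB_{p_{\ell,\ell'}}$. This cancellation is precisely what your direct-duality approach throws away, and it is indispensable.
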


 It is easy to observe that
\beq \label{inclusionBesov} \|\partial_\ell v\|_{\cB_p} \lesssim
\|v\|_{L^q} \with \frac2p+\frac3q=1\,, \, p>2,\quad\mbox{and}\quad
L^q \subset \cB_p\with \frac2p+\frac3q=2\, \cdotp \eeq In
particular, Theorem\refer{blowupBesovendpoint} implies blow up
criteria\refeq{blowupFond} and~\eqref{blowupBeiraVega}. It
generalizes also the result by  D. Fang and C. Qian (see~\cite{FQ})
who  proved sort of combined version of blow up
criteria\refeq{blowupFond} and~\eqref{blowupBeiraVega}, like for
instance critical Lebesgue norms of horizontal components of the
vorticity and of derivative to the third component of the velocity.

The paper is organized as follows. In the third section, we explain how the~$L^{\frac 32}$ energy estimate on
 the vorticity allows to prove the local existence of a solution to
 $(NS)$
  which satisfies the smoothing effect~``$\nabla |\Om|^{\frac34}$ belongs to~$L^2_{\rm loc} ([0,T^\star[;L^2(\R^3)$".

In the fourth section, we present the tool of anisotropic
Littlewood-Paley theory and some properties of anisotropic Besov
spaces which play a key role in the proof of
Propositions\refer{inegfondvroticity2D3D}
and\refer{estimadivhaniso}.

In the fifth section, we prove
Proposition\refer{inegfondvroticity2D3D}.

In the sixth section, we prove Proposition\refer{estimadivhaniso}.

In the seventh section,  we explain how to deduce Proposition\refer{SophisticatedGronwall} from  Propositions\refer{inegfondvroticity2D3D} and\refer{estimadivhaniso}.

In the eighth section, we prove Theorem\refer{blowupBesovendpoint}
and conclude the proof of Theorem \ref{theoxplosaniscaling}.

\medbreak Before going on, let us introduce some notations that will
be used in all that follows. Let $A, B$ be two operators, we denote
$[A;B]=AB-BA,$ the commutator between $A$ and $B$. For~$a\lesssim
b$, we mean that there is a uniform constant $C,$ which may be
different on different lines, such that $a\leq Cb$.  We denote by
$(a|b)_{L^2}$ the $L^2(\R^3)$ inner product of $a$ and $b$. For $X$
a Banach space and $I$ an interval of $\R,$ we denote by $C(I;\,X)$
the set of continuous functions on~$I$ with values in $X.$    For
$q$ in~$[1,+\infty],$ the notation $L^q(I;\,X)$ stands for the set
of measurable functions on $I$ with values in $X,$ such that
$t\longmapsto\|f(t)\|_{X}$ belongs to $L^q(I).$ Finally, we denote
by $L^p_T(L^q_{\rm h}(L^r_{\rm v}))$ the space $L^p([0,T];
L^q(\R_{x_{\rm h}};L^r(\R_{x_3})))$ with $x_{\rm h}=(x_1,x_2),$ and
$\nabla_{\rm h}=(\p_{x_1},\p_{x_2}),$ $\D_{\rm
h}=\p_{x_1}^2+\p_{x_2}^2.$

\setcounter{equation}{0}
\section{The local wellposedness of $(NS)$ for vorticity in~$L^{\frac32}$ revisited}

By virtue of  $(NS)$, the vorticity $\Om=\na\times v$ satisfies the
equation
$$
(NSV)\quad\left\{
\begin{array}{c}
\partial_t\Om -\D\Om+v\cdot\nabla \Om -\Om\cdot\nabla v=0\\
\Om_{|t=0} =\Om_0.
\end{array}
\right.
$$
The key  ingredient to prove Theorem \ref{existenceomegaL3/2}  is
the following lemma:
\begin{lem}
\label{estimpropagationLp}
{\sl
Let~$p$ be in~$]1,2[$ and~$a_0$ a
function in~$L^p$. Let us consider a function~$f$
in~$L^1_{\rm loc}(\R^+;L^p)$ and~$v$ a divergence free vector field
in~$L^2_{\rm loc}(\R^+;L^\infty)$ . If $a$ solves
$$
(T_v)\quad\left\{
\begin{array}{c} \partial_t a  -\D a +v\cdot\nabla a=f\\
a_{|t=0}=a_0
\end{array}
\right.
$$
then~$a$ satisfies ~$|a|^{\frac p2}$ belonging
to~$L^\infty_{\rm loc}(\R^+;L^2)\cap L^2_{\rm loc}(\R^+;\dH^1)$ and
\beq\label{tvestimate} \begin{split} \frac1p\int_{\R^3} | a(t,x)
|^pdx &+(p-1) \int_0^t \int_{\R^3} |\nabla a(t',x)|^2|a(t',x)|^{p-2}
dx\,dt'\\
&=\frac1p\int_{\R^3} | a_0(x) |^pdx+ \int_0^t\int_{\R^3} f(t',x)
a(t',x) |a(t',x)|^{p-2}dx\,dt'. \end{split} \eeq
}
\end{lem}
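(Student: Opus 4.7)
The plan is to justify the formal $L^p$-energy computation for $(T_v)$: multiplying the equation by $|a|^{p-2}a$, the time derivative yields $\f1p\f{d}{dt}\|a\|_{L^p}^p$; the transport term vanishes because $v\cdot\nabla a\cdot|a|^{p-2}a=\f1p v\cdot\nabla|a|^p$ integrates to zero by the divergence-free condition on $v$; and integration by parts on the Laplacian produces the nonnegative dissipation $(p-1)\int|\nabla a|^2|a|^{p-2}\,dx$. The obstacle for $p\in\,]1,2[$ is that $|a|^{p-2}$ is singular on the zero set of $a$, so this formal computation must be regularized.

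Concretely, I would introduce the convex approximation $\phi_\e(s)\eqdef(s^2+\e^2)^{p/2}-\e^p$, which satisfies $\phi_\e(s)\nearrow|s|^p$ as $\e\to 0^+$, $|\phi_\e'(s)|\leq p|s|^{p-1}$ with $\phi_\e'(s)\to p|s|^{p-2}s$ pointwise, and $0\leq\phi_\e''(s)=p(s^2+\e^2)^{(p-4)/2}[(p-1)s^2+\e^2]\leq p|s|^{p-2}$ on $\{s\neq 0\}$ with $\phi_\e''(s)\to p(p-1)|s|^{p-2}$ pointwise there. I would also approximate $(v,a_0,f)$ by smooth divergence-free data $(v_n,a_0^n,f_n)$ and use standard parabolic theory to solve $(T_{v_n})$, producing smooth, rapidly decaying solutions $a_n$. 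Testing $(T_{v_n})$ against $\phi_\e'(a_n)$ and using the chain-rule identities $\partial_t\phi_\e(a_n)=\phi_\e'(a_n)\partial_ta_n$, $v_n\cdot\nabla\phi_\e(a_n)=\phi_\e'(a_n)v_n\cdot\nabla a_n$, and $\phi_\e'(a_n)\D a_n=\D\phi_\e(a_n)-\phi_\e''(a_n)|\nabla a_n|^2$ together with $\dive v_n=0$, integration in space and time yields the smoothed identity
\begin{equation*}
\int_{\R^3}\phi_\e(a_n(t))\,dx+\int_0^t\!\!\int_{\R^3}\phi_\e''(a_n)|\nabla a_n|^2\,dx\,dt'=\int_{\R^3}\phi_\e(a_0^n)\,dx+\int_0^t\!\!\int_{\R^3}f_n\phi_\e'(a_n)\,dx\,dt'.
\end{equation*}

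Sending $\e\to 0^+$ for fixed $n$, monotone convergence handles the first and third terms, while dominated convergence handles the source term via the bound $|f_n\phi_\e'(a_n)|\leq p|f_n|\,|a_n|^{p-1}$, which lies in $L^1$ by H\"older's inequality. For the dissipation term I would first apply Fatou's lemma to obtain a ``$\leq$'' inequality; combined with the now-known limits of the other three terms, this forces $|a_n|^{p-2}|\nabla a_n|^2\in L^1([0,t]\times\R^3)$, after which the uniform domination $\phi_\e''(a_n)|\nabla a_n|^2\leq p|a_n|^{p-2}|\nabla a_n|^2$ promotes Fatou to dominated convergence and yields the equality \eqref{tvestimate} for $a_n$. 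The final passage $n\to\infty$ proceeds via uniform-in-$n$ $L^p$-energy bounds and standard compactness to identify the limit with $a$; the regularity $|a|^{p/2}\in L^\infty_{\rm loc}(\R^+;L^2)\cap L^2_{\rm loc}(\R^+;\dot H^1)$ then follows from the chain-rule identity $\nabla|a|^{p/2}=\f p2|a|^{(p-2)/2}\nabla a$ applied to the limiting object.

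The main obstacle lies in controlling the dissipation term through both limits. The integrand $\phi_\e''(a_n)|\nabla a_n|^2$ converges only pointwise on $\{a_n\neq 0\}$ to a function whose integrability is not known a priori, so dominated convergence cannot be invoked directly; instead this integrability must be bootstrapped from the identity itself, using that the other three terms already have known limits. An analogous Fatou-then-bootstrap maneuver is required in the $n\to\infty$ passage in order to avoid degrading the final equality into a one-sided inequality.
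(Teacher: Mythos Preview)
Your proposal is correct and follows essentially the same regularize--then--pass--to--the--limit strategy as the paper: reduce to smooth data, desingularize the factor $|a|^{p-2}$, integrate by parts, and take the regularization parameter to zero. The paper's specific choice is to replace $|a|^{p-2}$ by $(|a|+\delta)^{p-2}$ and to use dominated convergence on the Laplacian side (via the bound $|\Delta a|\,|a|^{p-1}\in L^1$ for smooth $a$) together with monotone convergence on the gradient side, rather than your $\phi_\e(s)=(s^2+\e^2)^{p/2}$ and Fatou-then-bootstrap; both routes are standard and yield the identity \eqref{tvestimate} with the same amount of work.
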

\begin{proof}  Note that for $p$ in~$ ]1,2[,$ $\na a=\na
a|a|^{\f{p-2}2}\times |a|^{\f{2-p}2},$ which belongs to
$L^2_{\rm loc}(\R^+; L^p)$ according to the energy inequality
\eqref{tvestimate}. Moreover,  $v$ belongs to~$L^2_{\rm loc}(\R^+;L^\infty),$ so that
$v\cdot\na a$ is in~$ L^1_{\rm loc}(\R^+; L^p),$ and hence arguing by
density, we can assume that all the functions in $(T_v)$ are smooth.
As the function~$r\mapsto r^p$ is~ $C^1$, we first write that
\beno
\frac1p\frac d{dt} \int_{\R^3}|a(t,x)|^p dx   & = & \int_{\R^3} \partial_t a\, a\, |a|^{p-2} dx\\
& =& -\frac 1p\int_{\R^3} v(t,x)\cdot \nabla |a |^p(t,x) dx
+\int_{\R^3} \D a(t,x) a (t,x)|a(t,x)|^{p-2}dx\\
 & &\qquad\qquad\qquad\qquad\qquad\qquad\qquad{}+\int_{\R^3} f(t,x)a
(t,x)|a(t,x)|^{p-2}dx.
\eeno
 As~$v$ is assumed to be divergence free, we get
$$
\frac1p\frac d{dt} \int_{\R^3}|a(t,x)|^p dx =\int_{\R^3} \D a(t,x) a (t,x)|a(t,x)|^{p-2}dx{}+\int_{\R^3} f(t,x)a (t,x)|a(t,x)|^{p-2}dx.
$$
Integrating the above inequality over $[0,t]$ yields
\beq
\label{estimpropagationLpdemoeq1}
\begin{split}
\frac1p\int_{\R^3}|a(t,x)|^p dx &=\frac1p\int_{\R^3}|a_0(x)|^p dx+\int_0^t\int_{\R^3} \D a(t',x) a (t',x)|a(t',x)|^{p-2}dxdt'\\
&\qquad\qquad\qquad{}+\int_0^t\int_{\R^3} f(t',x)a
(t',x)|a(t',x)|^{p-2}dx.
\end{split}
\eeq
In the case when~$p\geq 2$,  the function~$r\mapsto r^{p-1}$ is~$C^1$ and then an integration by parts implies that
$$
\int_{\R^3} \D a(t,x) a (t,x)|a(t,x)|^{p-2}dx = -(p-1) \int_{\R^3} |\nabla a (t,x)|^2|a(t,x)|^{p-2}dx.
$$
In the case when~$p$ is less than~$2$, some regularization has to be
made.  Indeed, even for smooth function, the fact that~$|a|^{\frac
p2}$ belongs to~$\dH^1$ is not obvious.  As~$a$ is supposed to be
smooth, in particular, we have that $a$ is bounded and~$\D
a\,|a|^{p-1} $ belongs to~$L^\infty_{\rm loc}(\R^+,L^1)$. Thus, using
Lebesgue's theorem, we infer that \beq
\label{estimpropagationLpdemoeq2}
\begin{split}
&\lim_{\d\rightarrow 0}  \int_0^t\int_{\R^3} \D a(t',x) a (t',x)\bigl(|a(t',x)|+\d\bigr)^{p-2}\,dx\,dt'\\
&\qquad\qquad\qquad\qquad\qquad\qquad{}=\int_0^t\int_{\R^3} \D
a(t',x) a (t',x)|a(t',x)|^{p-2}\,dx\,dt' .
\end{split}
\eeq As the function~$r\mapsto (r+\d)^{p-2}$ is smooth for any
positive~$\d$, we obtain
$$
\longformule{ -\int_{\R^3} \D a(t',x) a
(t',x)\bigl(|a(t',x)|+\d\bigr)^{p-2}\,dx = \int_{\R^3} |\nabla
a(t',x)|^2 \bigl(|a(t',x)|+\d\bigr)^{p-2}\,dx } {{} +(p-2)
\int_{\R^3} \nabla a(t',x) \cdot (\nabla |a|)(t',x)  a(t',x)
\bigl(|a(t',x)|+\d\bigr)^{p-3}dx. }
$$
It is well-known that
$$
\nabla |a| =\nabla a\, \frac {a}{|a|}\,\cdotp
$$
Thus we get  by time integration that
$$
\longformule{ \int_0^t\int_{\R^3} \D a(t',x) a
(t',x)\bigl(|a(t',x)|+\d\bigr)^{p-2}dx = \int_0^t\int_{\R^3} |\nabla
a(t',x)|^2 \bigl(|a(t',x)|+\d\bigr)^{p-2}\,dx\,dt' } {{} +(p-2)
\int_0^t\int_{\R^3} | \nabla a(t',x) |^2|  a(t',x)
|\bigl(|a(t',x)|+\d\bigr)^{p-3}\,dx\,dt'. }
$$
For the term in the right-hand side of the above inequality, thanks
to\refeq{estimpropagationLpdemoeq2} and to the monotonic convergence
theorem, we get that~$|\nabla a|^2|a|^{p-2}$ belongs
to~$L^1_{\rm loc}(\R^+;L^1)$ and that
$$
-\int_0^t\int_{\R^3} \D a(t',x) a (t',x)|a(t',x)|^{p-2}\,dx\,dt'
=(p-1) \int_0^t\int_{\R^3} |\nabla a(t',x)|^2 |a(t',x)|^{p-2}
\,dx\,dt'.
$$ Resuming the above estimate into
\eqref{estimpropagationLpdemoeq1} leads to \eqref{tvestimate}. This
proves the lemma.
\end{proof}

We remark that we shall use Lemma\refer{estimpropagationLp} in the
case when~$p=3/2$. Indeed, by virtue of \eqref{a.10wl}, one has
$$
|\nabla a_{\frac34} | =\bigl|\nabla | a |^{\frac34} \bigr| =  \frac
34 |\nabla a |\,|a|^{-\frac14}.
$$
Then \eqref{tvestimate} applied for $p=\f32$ gives rise to
\beq\label{klips} \frac 23 \bigl\|a_{\frac34}(t) \bigr\|_{L^2}^2
+\frac 8 9 \int_0^t \bigl\|\nabla a_{\frac34}(t')\bigr\|_{L^2}^2 dt'
=\frac 23 \bigl\||a_0|^{\f34}\bigr\|_{L^2}^2+\int_0^t \int_{\R^3}
f(t',x)a_{\frac12}(t',x)\,dx\, dt'. \eeq

Let us turn to the proof of Theorem \ref{existenceomegaL3/2}.

 \no\begin{proof}[Proof of
Theorem \ref{existenceomegaL3/2}] Biot-Sarvart's law  claims that
$v_0=-\na\D^{-1}\times \Om_0$. This implies
that~$\|v_0\|_{H^{\frac12} } \lesssim \|\Om_0\|_{H^{-\frac12}}$.
Using the dual Sobolev embedding~$\|f\|_{H^{-\frac12}}\lesssim
\|f\|_{L^{\frac 32}}$, we deduce that ~$v_0$ belongs
to~$H^{\frac12}$.  Then applying Fujita-Kato theory
\cite{fujitakato} ensures that $(NS)$ has a unique solution $v$ on
$[0,T^\ast[$ in the space~$C([0,T^\ast[;H ^{\f12})\cap L^2_{\rm
loc}([0,T^\star[;H ^{\frac32}).$ Moreover, it follows from
Proposition B.1 of \cite{CGZ} that~$v$ belongs to~$L^2_{\rm
loc}([0,T^\ast[; L^\infty).$ Then to apply Lemma
\ref{estimpropagationLp} for~$(NSV)$ with the external
force~$f=\Om\cdot\nabla v$, we only need to estimate this term.
Indeed as the solution~$v$ belongs to~$L^2_{\rm loc}([0,T^\star[;H
^{\frac32})$, we use   Sobolev inequality to get \beno
\Bigl|\int_0^t\int_{\R^3}\Om\cdot\na
v\,\Om|\Om|^{-\f12}\,dx\,dt'\Bigr| & \leq &\int_0^t
\|\Om\cdot\nabla v(t') \|_{L^{\frac32}}\|\Om(t')\|_{L^{\f32}}^{\f12}\,dt'\\
& \leq&\int_0^t\|\Om(t')\|_{L^3}\|\nabla v(t')\|_{L^3}\|\Om(t')\|_{L^{\f32}}^{\f12}\,dt'\\
& \leq&C\int_0^t\|\nabla v(t')\|_{\dH^{\frac
12}}^2\|\Om(t')\|_{L^{\f32}}^{\f12}\,dt'.
 \eeno
  By virtue of $(NSV)$,  by applying Lemma
\ref{estimpropagationLp} and using the convexity inequality
  $$
  ab\leq \frac 23 a^{\frac32}+\frac 13 b^3\with a = \|\nabla v(t')\|_{H^{\frac12}}^{\frac43}\andf
  b =  \|\nabla v(t')\|_{H^{\frac12}}^{\frac23}  \|\Om(t')\|_{L^{\f32}}^{\f12},
  $$
   we infer that
   $$ \longformule{
\f23\int_{\R^3}|\Om(t,x)|^{\f32}\,dx+\f12\int_0^t\int_{\R^3}|\na\Om(t',x)|^2|\Om(t',x)|^{-\f12}\,dx\,dt'}{{}\leq
\f23\int_{\R^3}|\Om_0(x)|^{\f32}\,dx+\int_0^t\|\na
v(t')\|_{\dH^{\f12}}^2\,dt'+C\int_0^t\|\na
v(t')\|_{\dH^{\f12}}^2\|\Om(t')\|_{L^{\f32}}^{\f32}\,dt'.} $$
Applying Gronwall Lemma gives rise to $$ \longformule{
\f23\int_{\R^3}|\Om(t,x)|^{\f32}\,dx+\f12\int_0^t\int_{\R^3}|\na\Om(t',x)|^2|\Om(t',x)|^{-\f12}\,dx\,dt'}{{}
\leq \Bigl(\f23\int_{\R^3}|\Om_0(x)|^{\f32}\,dx+\int_0^t\|\na
v(t')\|_{\dH^{\f12}}^2\,dt'\Bigr)\exp\Bigl(C\int_0^t\|\na
v(t')\|_{\dH^{\f12}}^2\,dt'\Bigr).} $$ Thus
Theorem\refer{existenceomegaL3/2} is proved.
 \end{proof}

\medbreak As a conclusion of this section,  let us establish some
Sobolev type inequalities which involves the regularities of
~$a_{\frac34}$ and~$\nabla a_{\frac34}$ in~$L^2$.
\begin{lem}
\label{BiotSavartomega}
{\sl
We have
\beq
\label{estimbasomega34}
 \|\nabla a\|_{L^{\frac32}} \lesssim \bigl\|\nabla a_{\frac34}\bigr\|_{L^2}
 \bigl\|a_{\frac34}  \bigr\|_{L^2}^{\frac13}.
 \eeq
Moreover, for~$s$  in $\ds\bigl[-1/2\,\virgp\, 5/6 \bigr]$, we have
\beq \label{estimbasomega34.0} \|a\|_{H^s} \leq C
\|a_{\frac34}\|_{L^2} ^{\frac 56-s}  \|\nabla a_{\frac34}\|_{L^2}
^{\frac 12+s} . \eeq }
\end{lem}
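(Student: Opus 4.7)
The plan is to derive both inequalities from the pointwise identity
$$|\nabla a| = \frac{4}{3}\,|a|^{1/4}\,|\nabla a_{3/4}|,$$
which is a rearrangement of $\nabla a_{3/4} = \frac{3}{4}(\nabla a)|a|^{-1/4}$. The regularization at $\{a=0\}$ needed to justify this is exactly the one already carried out in the proof of Lemma~\ref{estimpropagationLp}, so I would simply reuse it.

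For \eqref{estimbasomega34}, I would insert this identity into $\int|\nabla a|^{3/2}\,dx$ and apply H\"older with conjugate exponents $(4/3,\,4)$:
$$\|\nabla a\|_{L^{3/2}}^{3/2} \leq C\int |a|^{3/8}\,|\nabla a_{3/4}|^{3/2}\,dx \leq C\,\|\nabla a_{3/4}\|_{L^{2}}^{3/2}\,\|a\|_{L^{3/2}}^{3/8}.$$
Taking the $2/3$-power and using the tautology $\|a\|_{L^{3/2}}^{1/4}=\|a_{3/4}\|_{L^{2}}^{1/3}$ yields \eqref{estimbasomega34}.

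For \eqref{estimbasomega34.0} I would split $s\in[-1/2,\,5/6]$ into three ranges. The endpoint $s=-1/2$ is the Sobolev embedding $L^{3/2}(\R^3)\hookrightarrow H^{-1/2}(\R^3)$ applied to $\|a\|_{L^{3/2}}=\|a_{3/4}\|_{L^{2}}^{4/3}$. For the upper range $s\in[1/2,\,5/6]$, I would extend the H\"older step to an arbitrary $p\in[3/2,\,9/5]$ using conjugate exponents $(2/p,\,2/(2-p))$, together with the three-dimensional Gagliardo--Nirenberg inequality $\|a_{3/4}\|_{L^{q}}\lesssim \|a_{3/4}\|_{L^{2}}^{3/q-1/2}\,\|\nabla a_{3/4}\|_{L^{2}}^{3/2-3/q}$ applied with $q=2p/(3(2-p))\in[2,6]$. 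This yields
$$\|\nabla a\|_{L^{p}}\lesssim \|\nabla a_{3/4}\|_{L^{2}}^{3(p-1)/p}\,\|a_{3/4}\|_{L^{2}}^{(9-5p)/(3p)},$$
and the Sobolev embedding $W^{1,p}(\R^3)\hookrightarrow H^{s}(\R^3)$ with $s=5/2-3/p$ converts this into exactly \eqref{estimbasomega34.0}. The remaining range $s\in(-1/2,\,1/2)$ is then handled by the standard $L^2$-based interpolation $\|a\|_{H^{s}}\leq \|a\|_{H^{-1/2}}^{\lambda}\|a\|_{H^{1/2}}^{1-\lambda}$ with $\lambda=1/2-s$, matching the two endpoints.

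There is no real obstacle here: every step is a pointwise rewrite, a H\"older inequality, or a Sobolev embedding. The only delicate point is arithmetic bookkeeping on the exponents, which is forced by the scaling of the inequality and therefore closes on the nose; with $p=6/(5-2s)$, one checks $3(p-1)/p=1/2+s$ and $(9-5p)/(3p)=5/6-s$, and in the interpolation step $(3\lambda+1)/3=5/6-s$ with $1-\lambda=1/2+s$.
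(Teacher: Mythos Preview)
Your proof is correct and uses the same ingredients as the paper: the pointwise identity $|\nabla a|=\frac43|a|^{1/4}|\nabla a_{3/4}|$, H\"older, Sobolev embeddings, and interpolation between Sobolev spaces. The only difference is organizational. The paper is more economical: it establishes just the two endpoints $s=-1/2$ (via $L^{3/2}\hookrightarrow H^{-1/2}$) and $s=5/6$ (via the single H\"older step with $p=9/5$, then $\|a_{3/4}\|_{L^6}\lesssim\|\nabla a_{3/4}\|_{L^2}$ and $W^{1,9/5}\hookrightarrow H^{5/6}$), and then interpolates once across the whole interval $[-1/2,5/6]$. Your detour through a continuous family $p\in[3/2,9/5]$ to cover $s\in[1/2,5/6]$ directly, followed by a separate interpolation on $(-1/2,1/2)$, works but is redundant --- the intermediate $s$ are already captured by the single endpoint-to-endpoint interpolation. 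Either way the arithmetic closes on scaling, as you note.
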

\begin{proof}
Notice that due to \eqref{a.10wl}, $\ds |\na a |=\f43|\na
a_{\f34}|\,|a|^{\f14},$ then we get \eqref{estimbasomega34} by using
H\"older inequality. The dual Sobolev inequality claims that \beq
\label{BiotSavartomegademoeq1} \|a\|_{H^{-\frac12}} \leq C
\|a\|_{L^{\frac32}}= C \|a_{\frac34}\|_{L^2} ^{\frac 43}. \eeq
Moreover, using again that $|\nabla a|= \f43|\nabla
a_{\frac34}|\,|a|^{\frac14}$, H\"older inequality implies that \beno
\|\nabla a \|_{L^{\frac95}} & \leq &  \f43\|\nabla a_{\frac34}\|_{L^2} \|\,|a|^{\frac14}\|_{L^{18}}\\
& \leq & \f43 \|\nabla a_{\frac34}\|_{L^2}
\|a_{\frac34}\|_{L^6}^{\frac13}. \eeno Sobolew embedding of~$H^1$
into~$L^6$ then ensures that \beq \label{BiotSavartomegademoeq2}
\|\nabla a \|_{L^{\frac95}}\leq C  \|\nabla
a_{\frac34}\|_{L^2}^{\frac43}. \eeq Sobolev embedding
of~$W^{1,\frac95}$ into~$H^{\frac 56}$ leads to
$$
\|a\|_{H^{\frac56}} \leq C  \|\nabla a_{\frac34}\|_{L^2}^{\frac43},
$$
from which and \eqref{BiotSavartomegademoeq1}, we concludes the
proof of \eqref{estimbasomega34.0} and hence the lemma by using
interpolation inequality between~$H^s$ Sobolev spaces.
\end{proof}

\setcounter{equation}{0} \section{Some estimates related to
Littlewood-Paley analysis}

As we shall use the anisotropic Littlewood-Paley theory, we recall
the functional space framework we are going to use in this section.
As in\ccite{CDGG}, \ccite{CZ1} and \ccite{Pa02}, the definitions of
the spaces we are going to work with requires anisotropic dyadic
decomposition   of the Fourier variables. Let us recall from
\cite{BCD} that \beq
\begin{split}
&\Delta_k^{\rm h}a=\cF^{-1}(\varphi(2^{-k}|\xi_{\rm h}|)\widehat{a}),\qquad
\Delta_\ell^{\rm v}a =\cF^{-1}(\varphi(2^{-\ell}|\xi_3|)\widehat{a}),\\
&S^{\rm h}_ka=\cF^{-1}(\chi(2^{-k}|\xi_{\rm h}|)\widehat{a}),
\qquad\ S^{\rm v}_\ell a =
\cF^{-1}(\chi(2^{-\ell}|\xi_3|)\widehat{a})
 \quad\mbox{and}\\
&\Delta_ja=\cF^{-1}(\varphi(2^{-j}|\xi|)\widehat{a}),
 \qquad\ \
S_ja=\cF^{-1}(\chi(2^{-j}|\xi|)\widehat{a}), \end{split}
\label{1.0}\eeq where $\xi_{\rm h}=(\xi_1,\xi_2),$ $\cF a$ and
$\widehat{a}$ denote the Fourier transform of the distribution $a,$
$\chi(\tau)$ and~$\varphi(\tau)$ are smooth functions such that
 \beno
&&\Supp \varphi \subset \Bigl\{\tau \in \R\,/\  \ \frac34 \leq
|\tau| \leq \frac83 \Bigr\}\andf \  \ \forall
 \tau>0\,,\ \sum_{j\in\Z}\varphi(2^{-j}\tau)=1,\\
&&\Supp \chi \subset \Bigl\{\tau \in \R\,/\  \ \ |\tau|  \leq
\frac43 \Bigr\}\quad \ \ \andf \  \ \, \chi(\tau)+ \sum_{j\geq
0}\varphi(2^{-j}\tau)=1.
 \eeno

\begin{defi}\label{def4.1}
{\sl  Let $(p,r)$ be in~$[1,+\infty]^2$ and~$s$ in~$\R$. Let us consider~$u$ in~${\mathcal
S}_h'(\R^3),$ which means that $u$ is in~$\cS'(\R^3)$ and satisfies~$\ds\lim_{j\to-\infty}\|S_ju\|_{L^\infty}=0$. We set
$$
\|u\|_{B^s_{p,r}}\eqdefa\big\|\big(2^{js}\|\Delta_j u\|_{L^{p}}\big)_j\bigr\|_{\ell
^{r}(\ZZ)}.
$$
\begin{itemize}

\item
For $s<\frac{3}{p}$ (or $s=\frac{3}{p}$ if $r=1$), we define $
B^s_{p,r}(\R^3)\eqdefa \big\{u\in{\mathcal S}_h'(\R^3)\;\big|\; \|
u\|_{B^s_{p,r}}<\infty\big\}.$

\item
If $k$ is  a positive integer and if~$\frac{3}{p}+k\leq s<\frac{3}{p}+k+1$ (or
$s=\frac{3}{p}+k+1$ if $r=1$), then we define~$ B^s_{p,r}(\R^3)$  as
the subset of distributions $u$ in~${\mathcal S}_h'(\R^3)$ such that
$\partial^\beta u$ belongs to~$ B^{s-k}_{p,r}(\R^3)$ whenever $|\beta|=k.$
\end{itemize}
}
\end{defi}
Let us remark that in the particular case when $p=r=2,$ $B^s_{p,r}$
coincides with the classical homogeneous Sobolev spaces $\dH^s$.
Moreover, in the case when~$p=r=\infty$, it coincides with the
spaces defined in Definition\refer{def2.1ad} (see for instance
Theorem~2.34 on page~76 of\ccite{BCD}).


\medbreak

Similar to Definition \ref{def4.1}, we can also define the
homogeneous anisotropic Besov space.

\begin{defi}\label{anibesov}
{\sl Let us define the space $\bigl(B^{s_1}_{p,q_1}\bigr)_{\rm
h}\bigl(B^{s_2}_{p,q_2}\bigr)_{\rm v}$ as the space of distribution
in~$\cS'_h$  such that
$$
\|u\|_{\bigl(B^{s_1}_{p,q_1}\bigr)_{\rm
h}\bigl(B^{s_2}_{p,q_2}\bigr)_{\rm v}}\eqdefa \biggl(\sum_{k\in\Z}
2^{q_1ks_1} \Bigl(\sum_{\ell\in\Z}2^{q_2\ell s_2}\|\D_k^{\rm
h}\D_\ell^{\rm
v}u\|_{L^p}^{q_2}\Bigr)^{\f{q_1}{q_2}}\biggr)^{\f1{q_1}}
$$
is finite.
}
\end{defi}
We remark that when $p=q_1=q_2=2,$ the anisotropic Besov space
$\bigl(B^{s_1}_{p,q_1}\bigr)_{\rm h}\bigl(B^{s_2}_{p,q_2}\bigr)_{\rm
v}$ coincides with the classical homogeneous anisotropic Sobolev
space $\dH^{s_1,s_2}$ and thus the space~$\bigl(B^{-\frac
12+\theta}_{2,2}\bigr)_{\rm h}\bigl(B^{-\theta}_{2,2}\bigr)_{\rm v}$
is the space~$\cH_\theta$ defined in Definition\refer{def2.1ad}. Let
us also remark that in the case when
 $q_1$ is different from~$q_2$, the order of summation is important.

\medbreak
 For the
convenience of the readers, we recall the following anisotropic
Bernstein type lemma from \cite{CZ1, Pa02}:

\begin{lem}
\label{lemBern} {\sl Let $\cB_{h}$ (resp.~$\cB_{v}$) a ball
of~$\R^2_{h}$ (resp.~$\R_{v}$), and~$\cC_{h}$ (resp.~$\cC_{v}$) a
ring of~$\R^2_{h}$ (resp.~$\R_{v}$); let~$1\leq p_2\leq p_1\leq
\infty$ and ~$1\leq q_2\leq q_1\leq \infty.$ Then there holds:

\smallbreak\noindent If the support of~$\wh a$ is included
in~$2^k\cB_{h}$, then
\[
\|\partial_{x_{\rm h}}^\alpha a\|_{L^{p_1}_{\rm h}(L^{q_1}_{\rm v})}
\lesssim
2^{k\left(|\al|+2\left(\frac1{p_2}-\frac1{p_1}\right)\right)}
\|a\|_{L^{p_2}_{\rm h}(L^{q_1}_{\rm v})}.
\]
If the support of~$\wh a$ is included in~$2^\ell\cB_{v}$, then
\[
\|\partial_{3}^\beta a\|_{L^{p_1}_{\rm h}(L^{q_1}_{\rm v})} \lesssim
2^{\ell(\beta+(\frac1{q_2}-\frac1{q_1}))} \| a\|_{L^{p_1}_{\rm
h}(L^{q_2}_{\rm v})}.
\]
If the support of~$\wh a$ is included in~$2^k\cC_{h}$, then
\[
\|a\|_{L^{p_1}_{\rm h}(L^{q_1}_{\rm v})} \lesssim
2^{-kN}\sup_{|\al|=N} \|\partial_{x_{\rm h}}^\al a\|_{L^{p_1}_{\rm
h}(L^{q_1}_{\rm v})}.
\]
If the support of~$\wh a$ is included in~$2^\ell\cC_{v}$, then
\[
\|a\|_{L^{p_1}_{\rm h}(L^{q_1}_{\rm v})} \lesssim 2^{-\ell N}
\|\partial_{x_3}^N a\|_{L^{p_1}_{\rm h}(L^{q_1}_{\rm v})}.
\]
}
\end{lem}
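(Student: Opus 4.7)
The plan is to reduce each of the four inequalities to a standard one-variable convolution estimate, exploiting the tensor-product nature of anisotropic Fourier cutoffs. The horizontal operator associated with a support condition in the variable~$\xi_{\rm h}$ acts by convolution in~$x_{\rm h}$ with a rescaled smooth kernel, and similarly for the vertical direction; the only ingredient needed beyond scaling is Young's convolution inequality applied in a single variable, with the other variable handled as a parameter via the Minkowski integral inequality.

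For the first inequality, I would pick a smooth function $\phi_{\rm h}\in C_c^\infty(\R^2)$ equal to $1$ on a neighborhood of~$\cB_{\rm h}$. Since $\Supp \wh a\subset 2^k\cB_{\rm h}$, one has
\[
a(x_{\rm h},x_3)=2^{2k}\int_{\R^2}\phi_{\rm h}\bigl(2^k(x_{\rm h}-y_{\rm h})\bigr)\,a(y_{\rm h},x_3)\,dy_{\rm h},
\]
and differentiating yields $\p_{x_{\rm h}}^\al a=2^{k|\al|}\,K_{k,\al}*_{\rm h}a$ with $K_{k,\al}(x_{\rm h})\eqdefa 2^{2k}(\p^\al\phi_{\rm h})(2^k x_{\rm h})$. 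Taking the $L^{p_1}_{\rm h}(L^{q_1}_{\rm v})$ norm, the Minkowski inequality brings the $L^{q_1}_{\rm v}$ norm inside the horizontal convolution; Young's inequality with $1+\f1{p_1}=\f1r+\f1{p_2}$ then bounds the result by $\|K_{k,\al}\|_{L^r_{\rm h}}\,\|a\|_{L^{p_2}_{\rm h}(L^{q_1}_{\rm v})}$. An elementary scaling computation gives $\|K_{k,\al}\|_{L^r_{\rm h}}\lesssim 2^{2k(1/p_2-1/p_1)}$, which yields the stated bound. The second inequality is identical with roles of variables swapped: one uses $\phi_{\rm v}\in C_c^\infty(\R)$ supported near~$\cB_{\rm v}$, performs the vertical convolution, and applies Young in the variable~$x_3$ while treating~$x_{\rm h}$ as parameter.

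For the third and fourth inequalities, I would use an \emph{elliptic division} on the ring. Choose $\wt\phi_{\rm h}$ smooth, supported in a slightly enlarged ring, and equal to~$1$ on~$\cC_{\rm h}$; then for any~$N$ one can write
\[
\wt\phi_{\rm h}(\xi_{\rm h})=\sum_{|\al|=N}\xi_{\rm h}^\al\,g_\al(\xi_{\rm h})
\]
with each~$g_\al$ smooth and compactly supported (since~$|\xi_{\rm h}|$ is bounded away from~$0$ on~$\Supp\wt\phi_{\rm h}$). On the Fourier side this gives a reconstruction formula $a=\sum_{|\al|=N}2^{-kN}\,\wt K_{k,\al}*_{\rm h}(\p_{x_{\rm h}}^\al a)$, where $\wt K_{k,\al}$ is the rescaled inverse Fourier transform of~$g_\al$. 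Young's inequality in the horizontal variable produces the desired $2^{-kN}$ factor, and a supremum over~$|\al|=N$ absorbs the finite sum. The vertical ring inequality is proved in exactly the same way.

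The only technical care needed is to check that the kernels $K_{k,\al}$ and $\wt K_{k,\al}$ are Schwartz, so that their $L^r$ norms are uniform in~$k$ and~$\ell$ after rescaling; this is automatic because the underlying symbols are smooth and compactly supported. Consequently I do not anticipate any serious obstacle beyond the Minkowski bookkeeping step, which is the place where the anisotropic structure differs from the classical isotropic Bernstein lemma.
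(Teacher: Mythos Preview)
Your argument is correct and is the standard proof of the anisotropic Bernstein inequalities. The paper itself does not give a proof of this lemma: it is merely recalled from the references~\cite{CZ1,Pa02} (and the isotropic version is in~\cite{BCD}), so there is nothing to compare against. The convolution-with-rescaled-kernel plus Young/Minkowski scheme you outline is exactly the argument found in those sources, including the elliptic division $\wt\phi_{\rm h}(\xi_{\rm h})=\sum_{|\al|=N}\xi_{\rm h}^{\al}\,g_\al(\xi_{\rm h})$ on the ring, which one obtains for instance from the identity $1=\sum_{|\beta|=N}\dfrac{N!}{\beta!}\,\dfrac{\xi_{\rm h}^{2\beta}}{|\xi_{\rm h}|^{2N}}$ multiplied by~$\wt\phi_{\rm h}$.
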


\medbreak

As a corollary of Lemma \ref{lemBern}, we have the following
inequality, if~$1\leq p_2\leq p_1$, \beq
\label{inclusionSobolevtypeaniso}
\|a\|_{\bigl(B^{s_1-2\left(\frac1{p_2}-\frac 1
{p_1}\right)}_{p_1,q_1}\bigr)_{\rm
h}\bigl(B^{s_2-\left(\frac1{p_2}-\frac 1
{p_1}\right)}_{p_1,q_2}\bigr)_{\rm v}}\lesssim
\|a\|_{\bigl(B^{s_1}_{p_2,q_1}\bigr)_{\rm
h}\bigl(B^{s_2}_{p_2,q_2}\bigr)_{\rm v}}. \eeq

\medbreak

 To consider the
product of a distribution in the isentropic Besov space with a
distribution in the anisotropic Besov space, we need the following
result which allows to embed  isotropic Besov spaces into the
anisotropic ones.
\begin{lem}\label{embeda}
{\sl
Let $s$ be a positive real number and $(p,q)$ in~$ [1,\infty]$ with
$p\geq q.$ Then one has \beno
\|a\|_{L^p_{\rm h}\bigl((B^s_{p,q})_{\rm v}\bigr)}\lesssim \|a\|_{B^s_{p,q}}.
\eeno
}
\end{lem}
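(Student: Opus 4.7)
The plan is to reduce the anisotropic mixed norm to a purely vertical Besov norm computed with the full $L^p(\R^3)$ norm, and then to bound that in turn by the isotropic Besov norm by means of a frequency support argument. Throughout, write $c_j \eqdef 2^{js}\|\D_j a\|_{L^p}$ so that $\|a\|_{B^s_{p,q}} = \|(c_j)\|_{\ell^q}$, and let us recall that by Definition \ref{anibesov},
\beno
\|a\|_{L^p_{\rm h}((B^s_{p,q})_{\rm v})}
= \Bigl\| \bigl(2^{\ell s}\|\D_\ell^{\rm v} a(x_{\rm h},\cdot)\|_{L^p_{\rm v}}\bigr)_\ell \Bigr\|_{L^p_{\rm h}(\ell^q(\Z))}.
\eeno

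The first step is to exploit the hypothesis $p\geq q$. Since $p/q\geq 1$, Minkowski's inequality in $L^{p/q}_{\rm h}$ permits us to interchange the horizontal integration with the $\ell^q$ summation:
\beno
\|a\|_{L^p_{\rm h}((B^s_{p,q})_{\rm v})}
\leq \Bigl(\sum_{\ell\in\Z} 2^{q\ell s} \|\D_\ell^{\rm v} a\|_{L^p(\R^3)}^q\Bigr)^{\f1q},
\eeno
after using that $\|\D_\ell^{\rm v} a\|_{L^p_{\rm h}(L^p_{\rm v})} = \|\D_\ell^{\rm v} a\|_{L^p(\R^3)}$. This is exactly the point where the restriction $p\geq q$ is used, and it reduces the problem to a purely one-dimensional Littlewood--Paley comparison.

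Next, observe that the Fourier support of $\D_\ell^{\rm v} a$ is contained in $\{|\xi_3|\sim 2^\ell\}$, while the Fourier support of $\D_j a$ sits in $\{|\xi|\sim 2^j\}$. Since $|\xi_3|\leq |\xi|$, there exists a fixed integer $N_0$ such that $\D_\ell^{\rm v} \D_j a\equiv 0$ whenever $j\leq \ell-N_0$. Consequently, using the uniform $L^p$ boundedness of $\D_\ell^{\rm v}$,
\beno
\|\D_\ell^{\rm v} a\|_{L^p(\R^3)} \lesssim \sum_{j\geq \ell-N_0} \|\D_j a\|_{L^p(\R^3)},
\eeno
which after multiplication by $2^{\ell s}$ can be rewritten as the discrete convolution
\beno
2^{\ell s}\|\D_\ell^{\rm v} a\|_{L^p} \lesssim \sum_{k\leq N_0} 2^{ks}\, c_{\ell-k}.
\eeno

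The final step is to conclude with Young's inequality for sequences. Since $s>0$, the sequence $e_k\eqdef 2^{ks}\car_{\{k\leq N_0\}}$ belongs to $\ell^1(\Z)$, so Young's inequality yields
\beno
\Bigl(\sum_{\ell\in\Z}\bigl(2^{\ell s}\|\D_\ell^{\rm v} a\|_{L^p}\bigr)^q\Bigr)^{\f1q}
\lesssim \|e\|_{\ell^1}\,\|(c_j)\|_{\ell^q}
\lesssim \|a\|_{B^s_{p,q}},
\eeno
which combined with the Minkowski step above finishes the proof.

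The two ingredients that do the real work are the Minkowski interchange (requiring $p\geq q$) and the one-sided convolution kernel $e_k$ (summable thanks to $s>0$); I expect no serious obstacle beyond being careful with these two hypotheses, both of which appear as explicit assumptions in the statement.
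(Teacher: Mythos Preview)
Your proof is correct and follows essentially the same approach as the paper's: both use the Fourier support observation that $\D_\ell^{\rm v}\D_j\equiv 0$ for $j\leq \ell-N_0$, Young's inequality on $\Z$ (using $s>0$), and Minkowski's inequality (using $p\geq q$). The only cosmetic difference is the order of the two main steps: you apply Minkowski first and then the frequency/Young argument, whereas the paper establishes the $\ell^q$ bound on $\bigl(2^{\ell s}\|\D_\ell^{\rm v}a\|_{L^p}\bigr)_\ell$ first and applies Minkowski afterward.
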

\begin{proof} Once noticed that,  an integer~$N_0$ exists such that, if~$j$ is less or equal to~$\ell-N_0$
 then the operator~$\D^{\rm v}_\ell\D_j$ is identically~$0$, we can write that
\beno 2^{\ell s}\|\D_\ell^{\rm v}a\|_{L^p} & \lesssim & 2^{\ell
s}\sum_{\ell\leq j+N_0}\|\D_\ell^{\rm v}\D_j
a\|_{L^p}\\
& \lesssim&\sum_{\ell\leq j+N_0} 2^{(\ell-j)s}2^{j s}\|\D_j
a\|_{L^p}.
\eeno
Because~$s$ is positive,  Young inequality on~$\ZZ$ implies that
$$
\bigl\|\bigl( 2^{\ell s} \|\D_\ell^{\rm
v}a\|_{L^p}\bigr)_\ell\bigr\|_{\ell^q(\ZZ)} \lesssim
\|a\|_{B^s_{p,q}}.
$$
Due to $p\geq q,$ Minkowski inequality implies that \beno
\|a\|_{L^p_{\rm h}\bigl((B^s_{p,q})_{\rm v}\bigr)} &= &
\bigl\|\bigl(2^{\ell s}\|\D_\ell^{\rm v}
a(x_{\rm h},\cdot)\|_{L^p_{\rm v}}\bigr)_\ell\bigr\|_{\ell^q(\ZZ)}\bigr\|_{L^p_{\rm h}}\\
&\lesssim & \bigl\|\bigl( 2^{\ell s} \|\D_\ell^{\rm v}a\|_{L^p}\bigr)_\ell\bigr\|_{\ell^q(\ZZ)}\\
& \lesssim & \|a\|_{B^s_{p,q}}.
 \eeno
The lemma is proved.
\end{proof}

\begin{lem}
\label{isoaniso}
 {\sl For any~$s$ positive and any~$\theta$ in~$]0,s[$,
we have
$$
\|f\|_{(B^{s-\theta}_{p,q})_{\rm h}(B^{\theta}_{p,1})_{\rm v}} \lesssim
\|f\|_{B^{s}_{p,q}}.
$$
}
\end{lem}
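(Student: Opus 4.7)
The plan is to reduce the anisotropic norm on the left-hand side to a dyadic estimate in the isotropic blocks $\Delta_j f$, exactly as in the proof of Lemma~\ref{embeda}, but performing the book-keeping in both the horizontal and vertical frequencies simultaneously.

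First I would note that the supports of the Fourier transforms give $\Delta_k^{\rm h}\Delta_j=0$ whenever $k\geq j+N_0$ and $\Delta_\ell^{\rm v}\Delta_j=0$ whenever $\ell\geq j+N_0$, for some fixed integer $N_0$. Writing $f=\sum_j\Delta_j f$ and using that $\Delta_k^{\rm h}$ and $\Delta_\ell^{\rm v}$ are convolutions with $L^1$ kernels of uniformly bounded norms, I get the pointwise bound
\begin{equation*}
\|\Delta_k^{\rm h}\Delta_\ell^{\rm v}f\|_{L^p}\lesssim\sum_{j\geq\max(k,\ell)-N_0}\|\Delta_j f\|_{L^p}.
\end{equation*}

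Next, for each fixed $k$, I split the inner sum defining the vertical $B^{\theta}_{p,1}$ norm into $\ell\leq k$ and $\ell>k$. On $\ell\leq k$, the constraint becomes $j\geq k-N_0$, and the geometric series $\sum_{\ell\leq k}2^{\ell\theta}\lesssim 2^{k\theta}$ is summable because $\theta>0$. On $\ell>k$, I exchange the order of summation in $(\ell,j)$: the new range is $k+1\leq\ell\leq j+N_0$, and again $\theta>0$ gives $\sum_{\ell=k+1}^{j+N_0}2^{\ell\theta}\lesssim 2^{j\theta}$. Putting both pieces together (and using $k\leq j+N_0$),
\begin{equation*}
\sum_{\ell\in\Z}2^{\ell\theta}\|\Delta_k^{\rm h}\Delta_\ell^{\rm v}f\|_{L^p}\lesssim \sum_{j\geq k-N_0}2^{j\theta}\|\Delta_j f\|_{L^p}.
\end{equation*}

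Finally, I would multiply by $2^{k(s-\theta)}$, take the $\ell^q$ norm in $k$, and recognize the right-hand side as a convolution on $\Z$. Setting $a_j=2^{js}\|\Delta_j f\|_{L^p}$, the estimate becomes
\begin{equation*}
2^{k(s-\theta)}\sum_{j\geq k-N_0}2^{j\theta}\|\Delta_j f\|_{L^p}=\sum_{j\geq k-N_0}2^{(k-j)(s-\theta)}a_j,
\end{equation*}
which is the convolution of $(a_j)$ with the sequence $c_m=2^{-m(s-\theta)}\mathbf{1}_{m\geq-N_0}$. Since $\theta<s$, one has $s-\theta>0$ and therefore $c\in\ell^1(\Z)$; Young's inequality then gives the $\ell^q_k$ bound by $\|a\|_{\ell^q(\Z)}=\|f\|_{B^s_{p,q}}$, which is exactly the desired inequality.

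The only delicate point is the bookkeeping in the double sum over $(k,\ell,j)$, but it is routine once the restriction $\max(k,\ell)\leq j+N_0$ is exploited; both summability conditions ($\theta>0$ and $s-\theta>0$) are precisely built into the hypotheses, so no further obstacle arises.
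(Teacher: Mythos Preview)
Your proof is correct and follows essentially the same route as the paper's: both arguments split the inner sum at $\ell=k$, exploit the support restrictions $\Delta_k^{\rm h}\Delta_j=0$ for $j\leq k-N_0$ and $\Delta_\ell^{\rm v}\Delta_j=0$ for $j\leq \ell-N_0$, sum the geometric series in $\ell$ using $\theta>0$, and finish with Young's inequality on $\ZZ$ using $s-\theta>0$. The only cosmetic difference is that the paper treats the piece $\ell\leq k$ by bounding $\|\D_k^{\rm h}\D_\ell^{\rm v}f\|_{L^p}\lesssim\|\D_k^{\rm h}f\|_{L^p}$ directly (so the resulting convolution kernel there decays like $2^{-ms}$ rather than $2^{-m(s-\theta)}$), whereas you insert the isotropic decomposition $f=\sum_j\D_j f$ from the outset and obtain the uniform bound $\sum_{j\geq k-N_0}2^{j\theta}\|\D_j f\|_{L^p}$ for both pieces; this is a harmless simplification.
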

\begin{proof} This lemma means exactly that
 \beq
\label{inclusionanisoBesovell1demoeq1} V_k \eqdefa  \sum_{\ell\in\Z}
2^{\ell\theta}\|\D_k^{\rm h}\D_\ell^{\rm v}f\|_{L^p} \lesssim
c_{k,q} 2^{-k(s-\theta)}\|f\|_{B^s_{p,q}} \with( c_{k,q})_k\in
\ell^q(\ZZ).
 \eeq
We distinguish the case when~$\ell$ is less or equal to~$k$ from the
case when~$\ell$ is greater than~$k$. Using the fact that the
operators~$\D_\ell^{\rm v}$ are uniformly bounded on~$L^p,$ we write
 \ben
2^{k(s-\theta)} V_k &= &2^{k(s-\theta)} \sum_{\ell\leq k}
2^{\ell\theta} \|\D_k^{\rm h}\D_\ell^{\rm v}f\|_{L^p}
 +2^{k(s-\theta)} \sum_{\ell> k} 2^{\ell\theta}  \|\D_k^{\rm h}\D_\ell^{\rm v}f\|_{L^p}\nonumber\\
 \label{inclusionanisoBesovell1demoeq2}
  &\lesssim  & 2^{ks} \|\D_k^{\rm h}f\|_{L^p} +2^{k(s-\theta)} \sum_{\ell> k}
2^{\ell\theta} \|\D_k^{\rm h}\D_\ell^{\rm v}f\|_{L^p}. \een In the
case when~$\ell$ is greater than~$k$,  the set~$2^k\cC_{\rm h}\times
2^\ell\cC_{\rm v}$ is included a  ring of the type~$2^\ell\wt\cC$.
Thus, if~$|j-\ell|$ is greater than some fixed integer~$N_0$, then
we have~$\D_{j}\D_k^{\rm h}\D_\ell^{\rm v}\equiv 0$.  This gives
$$
\sum_{\ell> k} 2^{\ell\theta} \|\D_k^{\rm h}\D_\ell^{\rm v}f\|_{L^p}
\lesssim \sum_{\substack{|j-\ell|\leq N_0\\\ell>k}}
2^{\ell\theta} \|\D_{j}\D_k^{\rm h}\D_\ell^{\rm v}f\|_{L^p}.
$$Then using again that the
operators~$\D_\ell^{\rm v}$ and~$\D_k^{\rm h}$ are uniformly bounded
on~$L^p,$ we infer that
$$
\sum_{\ell> k} 2^{\ell\theta} \|\D_k^{\rm h}\D_\ell^{\rm v}f\|_{L^p}
\lesssim\sum_{j>k-N_0}2^{-j(s-\th)}2^{js}\|\D_{j}f\|_{L^p}.
$$
Moreover, we have~$\D_j\D_k^{\rm h}=0$ if~$j\leq k-N_1$. We thus
deduce from\refeq{inclusionanisoBesovell1demoeq2} that
$$
2^{k(s-\theta)} V_k \lesssim \sum_{j\geq k-N_1} 2^{-(j-k)s}
2^{js}\|\D_{j} f\|_{L^p}+ \sum_{j\geq k-N_0} 2^{-(j-k)(s-\theta)}
2^{js}\|\D_{j} f\|_{L^p}.
$$
This gives\refeq{inclusionanisoBesovell1demoeq1} and thus the lemma.
\end{proof}

One of the main motivation of using anisotropic Besov space is the
proof of the following proposition.
\begin{prop}
\label{BiotSavartBesovaniso}
{\sl
 Let~$v$ be a divergence free vector
field. Let us consider~$(\al,\theta)$ in~$]0,1/2[^2.$
Then we have
$$
\|v^{\rm h}\|_{\bigl(B^{1}_{2,1}\bigr)_{\rm
h}\bigl(B^{\f12-\al}_{2,1}\bigr)_{\rm v}}\lesssim \bigl\|\,
\om_{\frac34}\bigr\|_{L^2} ^{\frac13+\al} \bigl\|\nabla
\om_{\frac34}\bigr\|_{L^2} ^{1-\al} +\|\partial_3v^3\|_{L^2}^\al
\|\nabla
\partial_3v^3\|_{\cH_\theta}^{1-\al}.
$$
}
\end{prop}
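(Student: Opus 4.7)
The idea is to split $v^{\rm h}=\vcurl+\vdiv$ using the horizontal Biot-Savart decomposition \eqref{a.1wrt}, and to bound each piece by one of the two terms on the right-hand side of the claim.

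For $\vcurl=\nablah^\perp\D_{\rm h}^{-1}\om$, the Fourier multiplier $\nablah^\perp\D_{\rm h}^{-1}$ has order $-1$ in the horizontal variable, so on each horizontal dyadic block $\D_k^{\rm h}$ it acts as multiplication by $\sim 2^{-k}$. This exactly absorbs the weight $2^k$ coming from the $\bigl(B^1_{2,1}\bigr)_{\rm h}$ norm, reducing the first task to
$$\bigl\|\vcurl\bigr\|_{\bigl(B^1_{2,1}\bigr)_{\rm h}\bigl(B^{\f12-\al}_{2,1}\bigr)_{\rm v}}\lesssim \|\om\|_{\bigl(B^0_{2,1}\bigr)_{\rm h}\bigl(B^{\f12-\al}_{2,1}\bigr)_{\rm v}}.$$
The plan is then to dominate this anisotropic norm by the geometric mean of two isotropic Sobolev norms $\|\om\|_{\dH^{\f12-\al\pm\e}}$ for some small $\e>0$: I would split the horizontal frequency sum at a level $k_0$, use $H^{\pm\e}_{\rm h}$ to sum each geometric tail by Cauchy-Schwarz, and optimize $k_0$ (with a parallel trick in the vertical variable to pass from $\ell^1$ to $\ell^2$). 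Feeding each resulting $\|\om\|_{\dH^{\f12-\al\pm\e}}$ into Lemma \ref{BiotSavartomega} yields a product in $\|\om_{\f34}\|_{L^2}$ and $\|\na\om_{\f34}\|_{L^2}$, and log-convexity of the exponents in the Sobolev index $s$ returns exactly the powers $\f13+\al$ and $1-\al$ claimed.

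For $\vdiv=-\nablah\D_{\rm h}^{-1}\p_3v^3$, the same multiplier argument reduces the bound to
$$\bigl\|\vdiv\bigr\|_{\bigl(B^1_{2,1}\bigr)_{\rm h}\bigl(B^{\f12-\al}_{2,1}\bigr)_{\rm v}}\lesssim \|\p_3v^3\|_{\bigl(B^0_{2,1}\bigr)_{\rm h}\bigl(B^{\f12-\al}_{2,1}\bigr)_{\rm v}}.$$
I would interpolate the right-hand side between the low-regularity endpoint $\p_3v^3\in L^2$ (which furnishes the factor $\|\p_3v^3\|_{L^2}^\al$) and a high-regularity endpoint supplied by $\|\na\p_3v^3\|_{\cH_\th}$. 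Since $\cH_\th=H^{-\f12+\th,-\th}$, the latter quantity simultaneously controls $\p_3v^3$ in the two anisotropic Sobolev spaces $H^{\f12+\th,-\th}$ (via the horizontal gradient) and $H^{-\f12+\th,1-\th}$ (via the vertical one). Choosing the interpolation parameter equal to $1-\al$ lands on the target anisotropic regularity $\bigl(0,\f12-\al\bigr)$; the assumptions $\al,\th\in\,]0,1/2[$ are used to ensure all the geometric series in the dyadic frequency splittings converge.

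The main technical obstacle is that in the homogeneous setting the anisotropic Besov norm $\bigl(B^s_{2,1}\bigr)_{\rm h}\bigl(B^{s'}_{2,1}\bigr)_{\rm v}$ carries an $\ell^1$ summation in both directions and is therefore \emph{not} dominated by the corresponding anisotropic Sobolev norm $H^{s,s'}$. The small loss $\e$ and its recovery through log-convexity---on the curl side via Lemma \ref{BiotSavartomega}, on the divergence side via the convex combination of the $L^2$ and $\cH_\th$ endpoints---is what makes the passage possible, and the delicate bookkeeping required to arrive precisely at the exponents $\bigl(\f13+\al,1-\al\bigr)$ and $(\al,1-\al)$ is the technical heart of the argument.
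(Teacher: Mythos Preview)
Your overall strategy---split $v^{\rm h}=\vcurl+\vdiv$, kill the $\nablah\D_{\rm h}^{-1}$ multiplier to reduce to the $\bigl(B^0_{2,1}\bigr)_{\rm h}\bigl(B^{\frac12-\al}_{2,1}\bigr)_{\rm v}$ norm of $\om$ and of $\partial_3v^3$, then interpolate---matches the paper's. The gap is in the interpolation step for the curl piece.

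You propose to control $\|\om\|_{(B^0_{2,1})_{\rm h}(B^{\frac12-\al}_{2,1})_{\rm v}}$ by a product of \emph{isotropic} Sobolev norms $\|\om\|_{\dH^{\frac12-\al\pm\e}}$ and then invoke Lemma~\ref{BiotSavartomega}. This inequality is false in general. Take $a$ with $\|\D_k^{\rm h}\D_\ell^{\rm v}a\|_{L^2}=d_k$ for $\ell=0$, $k\le0$, and zero otherwise, where $d_k=(|k|+1)^{-1}$. Since $|\xi|\sim1$ on the support of each block, $\|a\|_{\dH^s}^2\sim\sum_{k\le0}d_k^2<\infty$ for \emph{every} $s$, so any product $\|a\|_{\dH^{s_1}}^{1/2}\|a\|_{\dH^{s_2}}^{1/2}$ is finite; yet the anisotropic norm is $\sum_{k\le0}d_k=\infty$. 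The obstruction is structural: isotropic $H^s$ sees only $|\xi|$ and gives no control on how mass is distributed among horizontal dyadic shells when those shells all sit at comparable $|\xi|$. With horizontal index exactly $0$ and $\ell^1$ summability, that distribution matters.

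The paper circumvents this by exploiting the extra Lebesgue integrability of $\om$. It first embeds
\[
\|\om\|_{(B^0_{2,1})_{\rm h}(B^{\frac12-\al}_{2,1})_{\rm v}}\lesssim \|\om\|_{(B^{1/9}_{9/5,1})_{\rm h}(B^{5/9-\al}_{9/5,1})_{\rm v}}
\]
via anisotropic Bernstein, which shifts the horizontal index from $0$ to the strictly positive $1/9$; Lemma~\ref{isoaniso} then legitimately passes to the isotropic $B^{2/3-\al}_{9/5,1}$. Finally a high/low frequency split interpolates $\|\om\|_{B^{2/3-\al}_{9/5,1}}$ between $\|\om\|_{L^{3/2}}=\|\om_{3/4}\|_{L^2}^{4/3}$ and $\|\nabla\om\|_{L^{9/5}}\lesssim\|\nabla\om_{3/4}\|_{L^2}^{4/3}$. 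The detour through $p=9/5$ is not cosmetic: it is precisely what buys the positive horizontal index needed to sum in~$k$.

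For the divergence piece, the paper proves a dedicated interpolation (Lemma~\ref{interpolHtheta}) giving
\(
\|a\|_{(B^0_{2,1})_{\rm h}(B^{\frac12-\al}_{2,1})_{\rm v}}\lesssim \|a\|_{\cH_\th}^\al\|\nabla a\|_{\cH_\th}^{1-\al},
\)
with $\cH_\th$ rather than $L^2$ as the low endpoint. This is again forced by the same issue: $\cH_\th=H^{-\frac12+\th,-\th}$ has \emph{negative} horizontal index, which is exactly what makes the low-$k$ sum converge in the $H_L$ part of the proof. Your proposed $L^2$ endpoint has horizontal index $0$ and runs into the same counterexample (note also that the $L^2$ in the proposition's statement is in fact inconsistent with the paper's own proof, which delivers $\cH_\th$).
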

\begin{proof}
Using horizontal Biot-Savart law\refeq {a.1} and
Lemma\refer{lemBern}, we have \beq
\label{BiotSavartBesovanisodemoeq1}\|v^{\rm
h}\|_{\bigl(B^{1}_{2,1}\bigr)_{\rm
h}\bigl(B^{\f12-\al}_{2,1}\bigr)_{\rm v}}\lesssim
\|\om\|_{\bigl(B^{0}_{2,1}\bigr)_{\rm
h}\bigl(B^{\f12-\al}_{2,1}\bigr)_{\rm v}}+
\|\partial_3v^3\|_{\bigl(B^{0}_{2,1}\bigr)_{\rm
h}\bigl(B^{\f12-\al}_{2,1}\bigr)_{\rm v}}. \eeq Applying
Lemmas\refer{lemBern} and Lemma \refer{isoaniso} gives \ben
\|\om\|_{\bigl(B^{0}_{2,1}\bigr)_{\rm
h}\bigl(B^{\f12-\al}_{2,1}\bigr)_{\rm v}} & \lesssim &
\|\om\|_{\bigl(B^{\frac19}_{\frac95,1}\bigr)_{\rm
h}\bigl(B^{\f59-\al}_{\frac 95,1}\bigr)_{\rm v}}
\nonumber\\
\label{BiotSavartBesovanisodemoeq2}& \lesssim & \|\om\|_{B^{\frac
23-\al}_{\frac95,1}}. \een Now let us
estimate~$\|\om\|_{B^s_{\f95,1}}$  in terms
of~$\bigl\|\om_{\frac34}\bigr\|_{L^2}$
and~$\bigl\|\na\om_{\frac34}\bigr\|_{L^{2}}$.
 For $s$ in~$ ]-1/3, 1[$  and any positive integer $N,$ which we shall choose hereafter, we write that
\beno
\|\om\|_{B^s_{\f95,1}} &=&
\sum_{j\leq N}2^{js}\|\D_j\om\|_{L^{\f95}}+\sum_{j>N}2^{js}\|\D_j\om\|_{L^{\f95}}\\
& \lesssim &
\sum_{j\leq N}2^{j(s+\f13)}\|\D_j\om\|_{L^{\f32}}+\sum_{j>N}2^{j(s-1)}\|\D_j\na\om\|_{L^{\f95}}\\
& \lesssim &
2^{N(s+\f13)}\|\om\|_{L^{\f32}}+2^{N(s-1)}\|\na\om\|_{L^{\f95}}.
\eeno
 Choosing $\ds N=\biggl[\log_2\biggl(e+ \Bigl(\f{\|\na
\om\|_{L^{\f95}}}{\|\om\|_{L^{\f32}}}\Bigr)^{\f34}\biggr)\biggr]$
yields
$$
\|\om\|_{B^s_{\f95,1}}\lesssim
\|\om\|_{L^{\f32}}^{\f34(1-s)}\|\na\om\|_{L^{\f95}}^{\f34(s+\f13)}.
$$
Using\refeq{BiotSavartomegademoeq2}, we infer that
$$
\|\om\|_{B^s_{\f95,1}}\lesssim
\bigl\|\om_{\frac34}\bigr\|_{L^2}^{1-s}\bigl\|\na\om_{\frac34}\bigr\|_{L^{2}}^{\f13+s}.
$$
Using this inequality  with~$s=\ds \frac 23 -\al$ and
\eqref{BiotSavartBesovanisodemoeq2} gives \beq \label{b.8}
\|\om\|_{\bigl(B^{0}_{2,1}\bigr)_{\rm
h}\bigl(B^{\f12-\al}_{2,1}\bigr)_{\rm v}} \lesssim \bigl\|\,
\om_{\frac34}\bigr\|_{L^2} ^{\frac13+\al} \bigl\|\nabla
\om_{\frac34}\bigr\|_{L^2} ^{1-\al}. \eeq

Now let us prove the following lemma.

\begin{lem}
\label{interpolHtheta}
{\sl  Let us consider~$(\al,\theta)$
in~$]0,1/2[^2$.
Then we have
$$
\|a\|_{\bigl(B^{0}_{2,1}\bigr)_{\rm
h}\bigl(B^{\f12-\al}_{2,1}\bigr)_{\rm v}}\lesssim
\|a\|_{\cH_\theta}^{\al}\|\nabla a\|_{\cH_\theta}^{1-\al}.
$$
}
\end{lem}
\begin{proof}
By definition of~$\|\cdot\|_{\bigl(B^{0}_{2,1}\bigr)_{\rm
h}\bigl(B^{\f12-\al}_{2,1}\bigr)_{\rm v}}$, we have \ben
\|a\|_{\bigl(B^{0}_{2,1}\bigr)_{\rm h}\bigl(B^{\f12-\al}_{2,1}\bigr)_{\rm v}}& =& H_L(a)+V_L(a)\with \nonumber\\
\label{interpolHthetademoeq1} H_L(a) & \eqdefa & \sum_{k\leq \ell}
\|\D_k^{\rm h}\D^{\rm v}_\ell\|_{L^2}
2^{\ell\left (\frac12-\al\right)} \andf\\
 V_L(a) & \eqdefa & \sum_{k> \ell} \|\D_k^{\rm h}\D^{\rm v}_\ell a\|_{L^2}  2^{\ell\left (\frac12-\al\right)} \nonumber
 \een
 In order to estimate~$H_L(a),$ we classically estimate differently  high and low vertical frequencies
  which are here the dominant ones. Using Lemma\refer{lemBern}, we  write that for any~$N$ in~$\ZZ$,
$$
H_L(a)  \lesssim    \sum_{k\leq \ell\leq N} \|\D_k^h\D^v_\ell a\|_{L^2} 2^{\ell\left (\frac12-\al\right)}+ \sum_{\substack{k\leq \ell\\\ell>N}} \|\D_k^h\D^v_\ell \partial_3a\|_{L^2} 2^{-\ell\left (\frac12+\al\right)}.
$$
By definition of the norm of~$\cH_\theta$, we get
$$
H_L(a)  \lesssim  \|a\|_{\cH_\theta}  \sum_{k\leq \ell\leq N} 2^{k\left(\frac12-\theta\right)} 2^{\ell\left (\frac12-\al+\theta\right)}+  \|\partial_3a\|_{\cH_\theta}\sum_{\substack{k\leq \ell\\\ell>N}}
2^{k\left(\frac12-\theta\right)} 2^{-\ell\left (\frac12+\al-\theta\right)}
$$
The hypothesis on~$(\al,\theta)$ imply that
\beno
H_L(a)  & \lesssim &   \|a\|_{\cH_\theta}  \sum_{\ell\leq N}  2^{\ell(1-\al)}+
 \|\partial_3a\|_{\cH_\theta} \sum_{\ell>N}   2^{-\al\ell}\\
& \lesssim &  \|a\|_{\cH_\theta}  2^{N(1-\al)}+
\|\partial_3a\|_{\cH_\theta}  2^{-N\al} \eeno Choosing~$N$ such
that~$\ds
2^N\sim\frac{\|\partial_3a\|_{\cH_\theta}}{\|a\|_{\cH_\theta}}$
gives \beq \label{interpolHthetademoeq2} H_L(a)\lesssim
\|a\|_{\cH_\theta}^\al\|\partial_3a\|_{\cH_\theta}^{1-\al}. \eeq The
term~$V_L(a)$ is estimated along the same lines. In fact, we  get,
by using again Lemma\refer{lemBern}, that \beno V_L(a) & \lesssim &
\sum_{\ell<k\leq N} \|\D_k^h\D^v_\ell a\|_{L^2} 2^{\ell\left
(\frac12-\al\right)} + \sum_{\substack{ \ell<k\\
k>N}}\|\D_k^h\D^v_\ell
\nabla_{\rm h}a\|_{L^2} 2^{\ell\left (\frac12-\al\right)}2^{-k}\\
& \lesssim &  \|a\|_{\cH_\theta} \sum_{\ell <k\leq N}
 2^{\ell\left (\frac12-\al+\th\right)} 2^{k\left(\frac 12-\theta\right)}+
 \|\nabla_{\rm h} a\|_{\cH_\theta}\sum_{\substack{\ell\leq k\\ k>N}} 2^{\ell\left (\frac12-\al+\th\right)}2^{-k\left(\frac12+\theta\right)}\\
 & \lesssim &  \|a\|_{\cH_\theta}  2^{N(1-\al)}+  \|\nabla_{\rm h}a\|_{\cH_\theta}  2^{-N\al}
\eeno Choosing~$N$ such that~$\ds 2^N\sim \frac{\|\nabla_{\rm h}
a\|_{\cH_\theta}}{\|a\|_{\cH_\theta}}$ yields
$$
V_L(a)\lesssim \|a\|_{\cH_\theta}^\al\|\nabla_{\rm h}
a\|_{\cH_\theta}^{1-\al}.
$$
Together with\refeq{interpolHthetademoeq1} and\refeq{interpolHthetademoeq2}, this gives the lemma.
\end{proof}

The application of Lemma  \ref{interpolHtheta} together
with\refeq{b.8} leads to Proposition \ref{BiotSavartBesovaniso}.
\end{proof}

To study product laws between distributions in the anisotropic Besov
spaces, we need to  modify the isotropic para-differential
decomposition of  Bony \cite{Bo81} to the setting of anisotropic
version. We first recall the isotropic para-differential
decomposition from \cite{Bo81}: let $a$ and~Ê$b$ be in~$
\cS'(\R^3)$, \beq \label{pd}\begin{split} &
ab=T(a,b)+\bar{T}(a,b)+ R(a,b)\with\\
& T(a,b)=\sum_{j\in\Z}S_{j-1}a\Delta_jb, \quad
\bar{T}(a,b)=T(b,a), \andf\\
&R(a,b)=\sum_{j\in\Z}\Delta_ja\tilde{\Delta}_{j}b,\quad\hbox{with}\quad
\tilde{\Delta}_{j}b=\sum_{\ell=j-1}^{j+1}\D_\ell a. \end{split} \eeq

As an application of the above basic facts on Littlewood-Paley
theory, we present the following product laws in the anisotropic
Besov spaces.

\begin{lem}
\label{lem2.2qw}
{\sl Let $q\geq 1,$ $p_1\geq p_2\geq 1$ with
$\f1{p_1}+\f1{p_2}\leq 1,$ and $s_1< \f2{p_1},$ $ s_2< \f2{p_2}$
(resp. $s_1\leq \f2{p_1},$ $ s_2\leq \f2{p_2}$ if $q=1$) with
$s_1+s_2>0.$ Let $\sigma_1< \f1{p_1},$ $ \sigma_2< \f1{p_2}$ (resp.
$\sigma_1\leq \f1{p_1},$ $ \sigma_2\leq \f1{p_2}$ if $q=1$) with
$\sigma_1+\sigma_2>0$. Then for $a$ in~$
\bigl(B^{s_1}_{p_1,q}\bigr)_{\rm h}\bigl(B^{\s_1}_{p_1,q}\bigr)_{\rm v}$ and~$b$
in~$\bigl(B^{s_2}_{p_2,q}\bigr)_{\rm h}\bigl(B^{\s_2}_{p_2,q}\bigr)_{\rm v}$ ,
the product~$ab$ belongs to~$
\bigl(B^{s_1+s_2-\f2{p_2}}_{p_1,q}\bigr)_{\rm h}\bigl(B^{\sigma_1+\sigma_2-\f{1}{p_2}}_{p_1,q}\bigr)_{\rm v},$
and \beno \|a
b\|_{\bigl(B^{s_1+s_2-\f2{p_2}}_{p_1,q}\bigr)_{\rm h}\bigl(B^{\sigma_1+\sigma_2-\f{1}{p_2}}_{p_1,q}\bigr)_{\rm v}}\lesssim
\|a\|_{\bigl(B^{s_1}_{p_1,q}\bigr)_{\rm h}\bigl(B^{\s_1}_{p_1,q}\bigr)_{\rm v}}\|b\|_{\bigl(B^{s_2}_{p_2,q}\bigr)_{\rm h}\bigl(B^{\s_2}_{p_2,q}\bigr)_{\rm v}}.
\eeno}
\end{lem}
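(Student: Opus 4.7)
The plan is to adapt Bony's paradifferential calculus \eqref{pd} to the anisotropic setting by applying the decomposition simultaneously in the horizontal and vertical variables. Writing $ab=T^{\rm h}(a,b)+\bar T^{\rm h}(a,b)+R^{\rm h}(a,b)$ horizontally, and then applying the analogous vertical decomposition to each of the three resulting pieces, one obtains the nine-term expansion
\[
ab=\sum_{\star,\diamond\in\{T,\bar T,R\}}\star^{\rm h}\diamond^{\rm v}(a,b),
\]
where each summand is an iterated sum of the form $\sum_{k,\ell}$ of products of two $\D_k^{\rm h}\D_\ell^{\rm v}$-localized factors. The target Besov norm is then estimated by computing $\|\D_k^{\rm h}\D_\ell^{\rm v}(\star^{\rm h}\diamond^{\rm v}(a,b))\|_{L^{p_1}}$ for each piece and summing in $(k,\ell)$ with the appropriate weights.

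For each of the nine terms I would combine three ingredients. First, the support information that comes with each block (almost-orthogonality of $\D_k^{\rm h}\D_\ell^{\rm v}$) pins down which pairs of dyadic blocks of $a$ and $b$ actually contribute. Second, the anisotropic Bernstein inequalities of Lemma \ref{lemBern} are used to trade $L^{p_2}$ integrability for $L^{p_1}$ at the cost of losing $2(1/p_2-1/p_1)$ horizontal derivatives and $1/p_2-1/p_1$ vertical derivatives, which is exactly the loss reflected in the indices of the conclusion. Third, H\"older's inequality in $L^{p_1}_{\rm h}(L^{p_1}_{\rm v})$ is applied to the product of the two factors, and this is precisely where the hypothesis $\tfrac1{p_1}+\tfrac1{p_2}\le 1$ is used.

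The summation in $\ell^q$ is then carried out via Young's inequality on $\Z$. In a paraproduct block carrying $T^{\rm h}$ (resp.\ $T^{\rm v}$), the low-frequency factor is estimated by summing a geometric series, which converges under the strict inequalities $s_1<\tfrac2{p_1}$ (resp.\ $\sigma_1<\tfrac1{p_1}$), with the equality case allowed when $q=1$ because the remaining $\ell^1$-sum is still admissible; by symmetry the same applies to $\bar T^{\rm h}$ and $\bar T^{\rm v}$, giving the analogous conditions on $s_2$ and $\sigma_2$. In a remainder block carrying $R^{\rm h}$ (resp.\ $R^{\rm v}$), one has to shift summation indices and then apply convolution in $\Z$, which converges exactly under $s_1+s_2>0$ (resp.\ $\sigma_1+\sigma_2>0$). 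Collecting the nine estimates and combining them by the discrete H\"older inequality in the $(k,\ell)$-variables yields the claimed bound.

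The main obstacle is bookkeeping: the nine pieces split into several subcases depending on which direction supplies the upper-bound condition $s_i<\tfrac2{p_i}$, $\sigma_i<\tfrac1{p_i}$ and which supplies the positivity condition $s_1+s_2>0$ or $\sigma_1+\sigma_2>0$. The most delicate piece is $R^{\rm h}R^{\rm v}(a,b)$, since both horizontal and vertical frequencies must be summed freely: one needs both positivity conditions at once, together with H\"older in two directions and Minkowski's inequality to interchange the partial sums; this is also where the sharp borderline $q=1$ naturally appears, since Young's inequality $\ell^q\!\star\!\ell^q\hookrightarrow \ell^q$ is available in that direction.
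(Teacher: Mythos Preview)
Your proposal is correct and follows exactly the approach the paper indicates: the paper itself does not give a detailed proof but only remarks that the lemma is ``a standard application of Bony's decomposition \eqref{pd} in both horizontal and vertical variables and Definition \ref{anibesov}'' and skips the details. Your nine-term horizontal--vertical paraproduct expansion, together with the use of Lemma~\ref{lemBern}, H\"older's inequality, and Young's inequality on $\Z$ under the stated index conditions, is precisely the standard argument being alluded to.
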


The proof of the above Lemma is a standard application of Bony's
decomposition \eqref{pd} in both horizontal and vertical variables
and Definition \ref{anibesov}. We skip the details here.

\setcounter{equation}{0}
\section{Proof of the  estimate for the horizontal vorticity}
\label{proofestimateomega}

 The purpose of this section to present the proof of Proposition \ref{inegfondvroticity2D3D}. Let us recall  the first  equation of our reformulation~$(\wt {NS})$ of the incompressible Navier-Stokes equation
 which is
 $$
 \partial_t\om+v\cdot\nabla\om -\D\om=  F\eqdefa \partial_3v^3\om +\partial_2v^3\partial_3v^1-\partial_1v^3\partial_3v^2.
 $$
 As already explained in the second section, we decompose~$F$ as a sum of three terms.  Hence by virtue of \eqref{klips}, we obtain
 \beq
\label{theoxplosaniscalingdemoeq1}
\begin{split}
&\frac 2 3 \bigl\|\om_{\frac 34}(t) \bigr\|_{L^2}^2  + \frac89\int_0^t \bigl\|\nabla \om_{\frac34}(t')\bigr\|_{L^2}^2\,dt' = \frac 2 3 \bigl\||\om_0|^{\frac 3 4}
\bigr\|_{L^2}^2 +\sum_{\ell=1}^3 F_\ell(t) \with\\
&F_1(t) \eqdefa  \int_0^t \int_{\R^3} \p_3v^3 |\om|^{\frac 3 2}\,dx\,dt'\,,\\
&F_2(t) \eqdefa  \int_0^t \int_{\R^3}\bigl(\p_2v^3\p_3v_{\rm curl}^1-\p_1v^3\p_3v_{\rm curl}^2\bigr) \om_{\frac 1 2}\,dx\,dt'\andf\\
&F_3(t) \eqdefa  \int_0^t \int_{\R^3}\bigl(\p_2v^3\p_3v_{\rm
div}^1-\p_1v^3\p_3v_{\rm div}^2 \bigr) \om_{\frac 1 2}\,dx\,dt',
\end{split}
 \eeq
where~$v_{\rm curl}^{\rm h}$ (resp.~$v_{\rm div}^{\rm h}$) corresponds the horizontal
divergence free (resp. curl free) part of the horizontal
vector~$v^{\rm h}=(v^1,v^2),$ which is given by \eqref{a.1},  and
where~$\om_{\frac 1 2}\eqdefa |\om|^{-\frac 12}\,\om$.

Let us start with the easiest term~$F_1$. 
 We first get, by using
integration by parts, that
$$
|F_1(t)|\leq\f32\int_0^t\int_{\R^3}|v^3(t',x)|\,|\p_3\om(t',x)|\,|\om(t',x)|^{\f12}\,dx\,dt'.
$$
Using that
$$
\frac {p-2} {3p} + \frac 2 3 + \frac 2 {3p}=1,
$$
we apply H\"older inequality to get
$$
|F_1(t)|\leq\frac32
\int_0^t\|v^3(t')\|_{L^{\f{3p}{p-2}}}\|\p_3\om(t')\|_{L^{\f32}}\bigl\|\om_{\frac34}(t')\bigr\|_{L^p}^{\f23}\,dt'.
$$
As~$p$ is in~$]4,6[$,  Sobolev embedding  and interpolation
inequality imply that
$$
\bigl\|\om_{\frac34}(t')\bigr\|_{L^p} \lesssim
\bigl\|\om_{\frac34}(t')\bigr\|_{H^{3\left(\frac12-\frac1p\right)}}\lesssim
\bigl\|\om_{\frac34}(t')\bigr\|_{L^2}^{\frac 3p-\frac12}
\bigl\|\nabla \om_{\f34}(t')\bigr\|_{L^2}^{\frac 3 2- \frac 3p}.
$$
Using\refeq{estimbasomega34}, this gives
$$
{
|F_1(t)| \lesssim \int_0^t\|v^3(t')\|_{\dH^{\f12+\f2p}}
\bigl\|\p_3\om_{\frac34}(t')\bigr\|_{L^2}
\bigl\|\om_{\frac34}(t')\bigr\|_{L^2}^{\f13}
}
{
\bigl\|\na\om_{\frac34}(t')\bigr\|_{L^2}^{1-\f2p}\bigl\|\om_{\frac34}(t')\bigr\|_{L^2}^{\f2p-\f13}\,dt'.
}
$$
Applying convex inequality, we obtain
 \ben
|F_1(t)| & \lesssim &
\int_0^t\|v^3(t')\|_{\dH^{\f12+\f2p}}\bigl\|\om_{\frac34}(t')\bigr\|_{L^2}^{\f2p}\bigl\|\na\om_{\frac34}(t')\bigr\|_{L^2}^{2(1-\f1p)} \,dt' \nonumber\\
 \label{a.9}  & \leq &
\f19\int_0^t\bigl\|\na\om_{\frac34}(t')\bigr\|_{L^2}^2\,dt'+C\int_0^t\|v^3(t')\|_{\dH^{\f12+\f2p}}^p\bigl\|\om_{\frac34}(t')\bigr\|_{L^2}^{2}\,dt'.
 \een

\medbreak The other two terms requires a refined way of the
description of the regularity of~$\om_{\frac12}$ and demands a
detailed study  of the anisotropic operator~$\nabla_{\rm h}\D_{\rm
h}^{-1}$ associated with the Biot-Savart's law in horizontal
variables.   Now we  state the lemmas which  allows us to treat  the
terms~$F_2$ and $F_3$ in \eqref{theoxplosaniscalingdemoeq1}.

\begin{lem}
 \label{puisancealphaBesov}
 {\sl Let~$(s,\al)$ be in~$]0,1[^2$ and~$(p,q)$ in~$[1,\infty]^2$.  We consider a function~$G$ from $\R$ to~$\R$ which is H\"olderian
  of exponent~$\al$. Then
 for any~$a$ in the Besov space~$ B^s_{p,q},$ one has
 $$
 \|G(a)\|_{B^{\al s}_{\frac p\al,\frac q\al} }\lesssim \|G\|_{C^\al} \bigl(\|a\|_{B^s_{p,q}}\bigr)^\al\with
\|G\|_{C^\al} \eqdefa \sup_{r\not =r' } \frac
{|G(r)-G(r')|}{|r-r'|^\al} \,\cdotp
 $$
 }
 \end{lem}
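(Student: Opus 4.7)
The plan is to reduce to the finite-difference (Nikol'skii-type) characterization of Besov spaces of regularity strictly between $0$ and $1$, where the H\"older estimate on $G$ can be applied pointwise. Recall that for $s\in\,]0,1[$ and any $(p,q)\in [1,\infty]^2$, one has the equivalence
$$
\|a\|_{B^s_{p,q}}\sim \Bigl(\int_{\R^3}\frac{\|\tau_h a-a\|_{L^p}^q}{|h|^{sq+3}}\,dh\Bigr)^{\f1q}\with \tau_h a(x)\eqdef a(x+h),
$$
with the obvious $\sup_h$ replacement if $q=\infty$. This is a classical theorem for the Littlewood-Paley definition used in \cite{BCD}, valid in particular for the homogeneous version of $B^s_{p,q}$ introduced in Definition \ref{def4.1}. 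I would first invoke this equivalence in order to rephrase both sides of the inequality in terms of translation differences.

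Next, I would apply the equivalence to $G(a)$ at the target index $(\al s, p/\al,q/\al)$ (note that $\al s$ still lies in $]0,1[$ and $p/\al, q/\al \in [1,\infty]$):
$$
\|G(a)\|_{B^{\al s}_{p/\al, q/\al}}^{q/\al}\sim
\int_{\R^3}\frac{\|\tau_h G(a)-G(a)\|_{L^{p/\al}}^{q/\al}}{|h|^{\al s (q/\al)+3}}\,dh
=\int_{\R^3}\frac{\|\tau_h G(a)-G(a)\|_{L^{p/\al}}^{q/\al}}{|h|^{sq+3}}\,dh.
$$
The pointwise H\"older bound $|G(a(x+h))-G(a(x))|\leq \|G\|_{C^\al}|a(x+h)-a(x)|^\al$ combined with the identity $\|\,|f|^\al\,\|_{L^{p/\al}}=\|f\|_{L^p}^\al$ yields
$$
\|\tau_h G(a)-G(a)\|_{L^{p/\al}}^{q/\al}\leq \|G\|_{C^\al}^{q/\al}\,\|\tau_h a-a\|_{L^p}^{q}.
$$
Substituting this into the previous integral gives
$$
\|G(a)\|_{B^{\al s}_{p/\al, q/\al}}^{q/\al}\lesssim \|G\|_{C^\al}^{q/\al}\int_{\R^3}\frac{\|\tau_h a-a\|_{L^p}^{q}}{|h|^{sq+3}}\,dh\sim \|G\|_{C^\al}^{q/\al}\,\|a\|_{B^s_{p,q}}^q,
$$
and raising to the power $\al/q$ gives the desired estimate. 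The boundary cases $p=\infty$ or $q=\infty$ are treated identically after replacing the corresponding integral by an essential supremum.

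The only genuine obstacle is justifying the finite-difference characterization in the homogeneous setting used here, since one must be sure that membership of $a$ in $\cS'_h$ with finite Besov seminorm guarantees that the integral $\int |h|^{-sq-3}\|\tau_h a-a\|_{L^p}^q\,dh$ is actually equivalent to the dyadic sum $\sum 2^{jsq}\|\Delta_j a\|_{L^p}^q$. This is standard (see e.g.\ Theorem 2.36 and the comments on homogeneous spaces in \cite{BCD}): the low-frequency cancellation afforded by membership in~$\cS'_h$ allows the usual splitting $\tau_h a-a=\sum_j(\tau_h\Delta_j a-\Delta_j a)$ to be summed, while on each dyadic block one uses the Taylor inequality $\|\tau_h\Delta_j a-\Delta_j a\|_{L^p}\lesssim \min(1,2^j|h|)\,\|\Delta_j a\|_{L^p}$ and the condition $0<s<1$ to gain summability. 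Once this equivalence is granted, the proof is a one-line computation as above.
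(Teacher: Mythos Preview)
Your proof is correct and follows essentially the same approach as the paper: both invoke the finite-difference characterization of Besov spaces for regularity in $]0,1[$ (citing Theorem~2.36 of \cite{BCD}), apply the pointwise H\"older bound $|G(a(x+h))-G(a(x))|\leq \|G\|_{C^\al}|a(x+h)-a(x)|^\al$, use the identity $\|\,|f|^\al\|_{L^{p/\al}}=\|f\|_{L^p}^\al$, and then integrate in $h$ with the appropriate weight. Your additional paragraph justifying the characterization in the homogeneous setting is more careful than the paper, which simply cites the reference.
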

 \begin{proof}
 Because the indices $s$ and~$ \al$ are between $0$ and~$1$, we use the definition of Besov spaces
  coming from integral in the physical space (see for instance Theorem 2.36 of \cite{BCD}). Indeed as
$$
\bigl | G(a) -G(b)|\lesssim \|G\|_{C^\al}  |a-b|^\al,
$$
we infer that \beno
\|G(a)-G(a(\cdot+y))\|_{L^{\frac p \al}} & = & \biggl(\int_{\R^d}\bigl |G(a(x))-G(a(x+y))\bigr|^{\frac p\al}  dx \biggr)^{\frac \al p}\\
& \lesssim & \|G\|_{C^\al}\biggl(\int_{\R^d} |a(x)-a(x+y)|^{p}  dx \biggr)^{\frac \al p} \\
& \lesssim & \|G\|_{C^\al}\|a-a(\cdot+y)\|_{L^p}^{\al}. \eeno Then
for any $q<\infty,$ we  write that \beno \biggl(\int_{\R^d} \frac {
\|G(a)-G(a(\cdot+y))\|_{L^{\frac p \al}}^{\frac q\al } } {|y|^{\al
s\times \frac q\al} } \frac {dy}{|y|^d} \biggr)^{\frac \al q}&
\lesssim & \|G\|_{C^\al}\biggl(\int_{\R^d} \frac {
\|a-a(\cdot+y)\|_{L^p} ^q}
{|y|^{sq} } \frac {dy}{|y|^d}\biggr)^{\frac \al q} \\
& \lesssim & \|G\|_{C^\al}\bigl(\|a\|_{B^s_{p,q}}\bigr)^\al. \eeno The case for
$q=\infty$ is identical. This completes the proof of the lemma.
 \end{proof}

 \begin{lem}
 \label{anositropiclemma}
{\sl Let  $\theta$ be in~$]0,1/6[,$ ~$\s$ in~$]3/4,1[$, and $ \ds s=\frac12+1-\frac23\s$. Then we have
 \beq \label{b.1} \begin{split} \Bigl| \int_{\R^3}
\partial_{\rm h}\D_{\rm h}^{-1}f
\partial_{\rm h}a \,\om_{\frac12} dx\Bigr|
& \lesssim  \|f\|_{L^{\frac32}} \|a\|_{\dH^s} \bigl\|\om_{\frac34}\bigr\|_{\dH^\s}^{\frac23}\andf   \\
 \Bigl| \int_{\R^3} \partial_{\rm h}\D_{\rm h}^{-1}f
\partial_{\rm h}a \,\om_{\frac12} dx\Bigr|& \lesssim  \| f\|_{\cH_{\theta}}
\|a\|_{\dH^s} \bigl\|\om_{\frac34}\bigr\|_{\dH^\s}^{\frac23},
\end{split}
\eeq for $\cH_\th$ given by Definition \ref{def2.1ad}.}
 \end{lem}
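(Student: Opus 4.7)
The approach rests on three ingredients. First, by the pointwise identity $\omega_{1/2}=G(\omega_{3/4})$ with $G(r)=r|r|^{-1/3}$ of H\"older class $C^{2/3}$, Lemma~\ref{puisancealphaBesov} applied with $\alpha=2/3$ converts a regularity bound on $\omega_{3/4}$ into a Besov bound on $\omega_{1/2}$; in particular it yields $\|\omega_{1/2}\|_{\dot B^{2\sigma/3}_{3,3}}\lesssim \|\omega_{3/4}\|_{\dH^\sigma}^{2/3}$, and by Besov--Sobolev embedding $\|\omega_{1/2}\|_{L^{9/(3-2\sigma)}}\lesssim \|\omega_{3/4}\|_{\dH^\sigma}^{2/3}$. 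Second, the operator $\partial_{\rm h}\D_{\rm h}^{-1}$ is a two-dimensional horizontal Riesz potential of order $-1$, hence gains one horizontal derivative both in anisotropic Lebesgue scales (by the 2-D Hardy--Littlewood--Sobolev inequality applied pointwise in $x_3$) and in anisotropic Besov scales. Third, the anisotropic product laws of Lemma~\ref{lem2.2qw}, combined with the embeddings of Lemmas~\ref{embeda}--\ref{isoaniso}, are used to combine the three factors.

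For the first inequality I would transfer $\partial_{\rm h}\D_{\rm h}^{-1}$ off $f$ by duality:
\[
\int_{\R^3}\partial_{\rm h}\D_{\rm h}^{-1}f\cdot \partial_{\rm h}a\cdot \omega_{1/2}\,dx=-\int_{\R^3} f\cdot \partial_{\rm h}\D_{\rm h}^{-1}\bigl(\partial_{\rm h}a\cdot \omega_{1/2}\bigr)\,dx,
\]
so the left side is bounded by $\|f\|_{L^{3/2}}\|\partial_{\rm h}\D_{\rm h}^{-1}(\partial_{\rm h}a\cdot \omega_{1/2})\|_{L^3}$. The horizontal 2-D HLS inequality reduces this to controlling $\|\partial_{\rm h}a\cdot \omega_{1/2}\|_{L^3_{\rm v}(L^{6/5}_{\rm h})}$, which I would in turn bound by $\|a\|_{\dH^s}\|\omega_{3/4}\|_{\dH^\sigma}^{2/3}$ via a Bony paraproduct decomposition in both horizontal and vertical variables, estimating each piece using Lemma~\ref{lem2.2qw} and feeding in the composition bound for $\omega_{1/2}$ together with the anisotropic placement of $a\in\dH^s$ supplied by Lemmas~\ref{embeda}--\ref{isoaniso}. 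The relation $s=\tfrac32-\tfrac{2\sigma}{3}$ is precisely what makes the regularity indices balance.

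For the second inequality I would replace the $L^{3/2}$--$L^3$ pairing by the natural anisotropic duality between $\cH_\theta=H^{-1/2+\theta,-\theta}$ and $H^{1/2-\theta,\theta}$, giving
\[
\Bigl|\int_{\R^3}\partial_{\rm h}\D_{\rm h}^{-1}f\cdot \partial_{\rm h}a\cdot \omega_{1/2}\,dx\Bigr|\le \|f\|_{\cH_\theta}\,\|\partial_{\rm h}\D_{\rm h}^{-1}(\partial_{\rm h}a\cdot \omega_{1/2})\|_{H^{1/2-\theta,\theta}}.
\]
Since $\partial_{\rm h}\D_{\rm h}^{-1}$ lowers the horizontal order by one, this reduces to controlling $\|\partial_{\rm h}a\cdot \omega_{1/2}\|_{H^{-1/2-\theta,\theta}}$, which I would handle by the same paraproduct/product-law strategy but against an anisotropic target instead of an isotropic mixed-norm one.

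The main obstacle is the index bookkeeping forced by Lemma~\ref{lem2.2qw}: the integrability thresholds $s_i<2/p_i$, $\sigma_i<1/p_i$ and the positivity conditions $s_1+s_2>0$, $\sigma_1+\sigma_2>0$ leave only a narrow window for distributing regularity between $\partial_{\rm h}a$ and $\omega_{1/2}$. The hypothesis $\sigma>3/4$ is exactly what guarantees a strictly positive Besov exponent $\tfrac{2\sigma}{3}>\tfrac12$ for $\omega_{1/2}$ after the H\"older composition, while $\sigma<1$ keeps the admissible range of Lemma~\ref{puisancealphaBesov}. The upper bound $\theta<1/6$ is needed so that, after losing one horizontal derivative to $\partial_{\rm h}a$ and another to $\partial_{\rm h}\D_{\rm h}^{-1}$, enough horizontal regularity margin remains for the target space $H^{-1/2-\theta,\theta}$. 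Verifying that a single consistent choice of exponents works simultaneously for all three paraproduct pieces (in both horizontal and vertical directions) and across both inequalities is the most delicate part of the argument.
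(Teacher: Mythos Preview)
Your high-level plan is on target (the composition bound $\|\om_{\frac12}\|_{B^{2\s/3}_{3,3}}\lesssim\|\om_{\frac34}\|_{\dH^\s}^{2/3}$ via Lemma~\ref{puisancealphaBesov}, duality, and a Bony decomposition), and your identification of the role of the constraints on $\s$ and $\theta$ is correct. However, there is a genuine gap: you propose to place the \emph{whole} product $\partial_{\rm h}a\cdot\om_{\frac12}$ into $L^3_{\rm v}(L^{6/5}_{\rm h})$ (first inequality) or into $H^{-\frac12-\theta,\theta}$ (second), and this fails for one paraproduct piece.

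The problematic term is $T(\om_{\frac12},\partial_{\rm h}a)$, where $\om_{\frac12}$ carries the low frequencies. Since $s-1<0$, the high-frequency factor $\partial_{\rm h}a$ has negative regularity, while $\om_{\frac12}\in B^{2\s/3}_{3,3}$ has \emph{positive} index; consequently $\|S_{j-1}\om_{\frac12}\|_{L^3}$ is not controlled by $\|\om_{\frac12}\|_{B^{2\s/3}_{3,3}}$ alone (the low-frequency tail diverges), and the best you can extract for this piece is membership in $H^{-1/2}$, which embeds neither into $L^3_{\rm v}(L^{6/5}_{\rm h})$ nor into $H^{-\frac12-\theta,\theta}$ (the latter demands strictly positive vertical regularity). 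Lemma~\ref{lem2.2qw} cannot repair this: with $p_1=2$, $p_2=3$ the output lands in an $H^{-1/2}$-type space, and no admissible choice of indices reaches the anisotropic targets you need.

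The paper's remedy is a Leibniz rewriting,
\[
T(\om_{\frac12},\partial_{\rm h}a)=\partial_{\rm h}T(\om_{\frac12},a)-T(\partial_{\rm h}\om_{\frac12},a),
\]
followed by \emph{different} treatments of the two resulting pieces. In $T(\partial_{\rm h}\om_{\frac12},a)$ the low-frequency factor now has negative index $2\s/3-1<0$, so it joins the other good terms in $B^{1/2}_{6/5,6/5}$ (which embeds into both $L^3_{\rm v}(L^{6/5}_{\rm h})$ and, for $\theta<1/6$, into $H^{-\frac12-\theta,\theta}$). For the total-derivative piece $\partial_{\rm h}T(\om_{\frac12},a)$ one integrates by parts once more so that $\partial_{\rm h}\D_{\rm h}^{-1}$ becomes the \emph{bounded} multiplier $\partial_{\rm h}^2\D_{\rm h}^{-1}$, and then pairs $f$ with $T(\om_{\frac12},a)\in H^{1/2}$ via the isotropic $H^{-1/2}\times H^{1/2}$ duality (using $L^{3/2}\hookrightarrow H^{-1/2}$ for the first inequality, and $H^{1/2}\hookrightarrow H^{\frac12-\theta,\theta}$ for the second). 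This split---not a single product law---is the missing idea in your argument.
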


 \begin{proof}
 Let us observe that~$\ds \om_{\frac12} =G(\om_{\frac34})$ with~$G(r)\eqdefa r|r|^{-\frac13}$. Using Lemma\refer{puisancealphaBesov}, we obtain
 \beq
 \label{anositropiclemmademoeq1}
\bigl\|\om_{\frac12}\bigr\|_{B^{\frac23\s}_{3,3}} \lesssim
\bigl\|\om_{\frac34}\bigr\|_{\dH^\s}^{\frac23}.
 \eeq
 Let us study the product~$\partial_{\rm h}a\om_{\frac12}$. Using Bony's decomposition \eqref{pd} and the Leibnitz formula, we
 write
 \beno
 \partial_{\rm h} a\, \om_{\frac12} & = & T({\partial_{\rm h} a}, \om_{\frac12}) +R(\partial_{\rm h}a,\om_{\frac12})+
 T({\om_{\frac12}}, \partial_{\rm h}a)\\
 & = & \partial_{\rm h}T({\om_{\frac12}}, a) +A(a,\om)\with \\
 A(a,\om) &\eqdefa&
  T({\partial_{\rm h} a}, \om_{\frac12}) +R(\partial_{\rm h}a,\om_{\frac12})
 -T({ \partial_{\rm h}\om_{\frac12}},a).
 \eeno
We first get, by using Lemma \ref{lemBern}, that
 \beno
\|\D_jT({\om_{\frac12}}, a)\|_{L^2}& \lesssim &\sum_{|j-j'|\leq
4}\|S_{j'-1}\om_{\f12}\|_{L^\infty}\|\D_{j'}a\|_{L^2}\\
& \lesssim &
c_{j,2}2^{-j(s+\f23\s-1)}\|\om_{\f12}\|_{B^{\f23\s}_{3,3}}\|a\|_{\dH^s},
\eeno
which together with \eqref{anositropiclemmademoeq1} ensures that
$$
 \bigl\| T({\om_{\frac12}}, a) \|_{\dH^{\frac12} } \lesssim \|a\|_{\dH^s} \bigl\|\om_{\frac34}\bigr\|_{H^\s}^{\f23}.
 $$
Using that the operator~$ \partial_{\rm h}^2\D_{\rm h}^{-1}$ is a bounded Fourier multiplier and  the dual Sobolev embedding~$L^{\frac32}\subset \dH^{-\frac12}$, we get that
\ben
\Bigl| \int_{\R^3} \partial_{\rm h}\D_{\rm h}^{-1}f \partial_{\rm h}
T({\om_{\frac12}}, a)\, dx\Bigr|
& =&
 \Bigl|\int_{\R^3}
\partial^2_{\rm h}\D_{\rm h}^{-1}f T({\om_{\frac12}}, a) \,dx\Bigr|\nonumber \\
&  \leq &
\|f\|_{H^{-\frac 12}}\|T({\om_{\frac12}}, a)\|_{H^{\frac12}} \nonumber \\
  \label{anositropiclemmademoeq2}&  \lesssim &
  \|f\|_{L^{\frac32}} \|a\|_{\dH^s}
  \bigl\|\om_{\frac34}\bigr\|_{\dH^\s}^{\frac23}.
 \een
 In the case of the anisotropic norm, recalling that~$\cH_\theta= H^{-\frac12+\theta,-\theta}$, and using Lemma\refer{isoaniso},  we write
\ben
 \Bigl| \int_{\R^3}
\partial^2_{\rm h}\D_{\rm h}^{-1}f  T({\om_{\frac12}}, a)\, dx\Bigr|  & \leq &
 \|f\|_{\cH_\theta} \|T({\om_{\frac12}}, a)\|_{\dH^{\frac12-\theta,\theta}}\nonumber \\
 & \leq &  \|f\|_{\cH_\theta} \|T({\om_{\frac12}}, a)\|_{\dH^{\frac12}}\nonumber \\
\label{anositropiclemmademoeq2b}& \leq &  \|f\|_{\cH_\theta} \|a\|_{\dH^s}
\bigl\|\om_{\frac34}\bigr\|_{\dH^\s}^{\frac23}.
  \een

 Now let us take into account the anisotropy induced by the operator~$\partial_{\rm h}\D_{\rm h}^{-1}$.
  Hardy-Littlewood-Sobolev inequality implies that~$\partial_{\rm h}\D_{\rm h}^{-1} f$ belongs to~$L^{\frac32}_{\rm v}(L^6_{\rm h})$
   if $f$ is in~$ L^{\f32}.$
  So that it amounts  to prove that~$A(a,\om)$ belongs to~$L^3_{\rm v}(L^{\frac65}_{\rm h}),$ which is simply an anisotropic Sobolev type embedding.
   Because of ~$s<1$, we get, by using Lemma \ref{lemBern}, that
   \beno
   \|\D_jT({\partial_{\rm h}a},\om_{\f12})\|_{L^{\f65}}
   &\lesssim &
   \sum_{|j'-j|\leq  4}\|S_{j'-1}\partial_{\rm h}a\|_{L^2}\|\D_{j'}\om_{\f12}\|_{L^3}\\
  &  \lesssim &\sum_{|j'-j|\leq
   4}c_{j',2}c_{j',3}2^{-j'(s+\f23\s-1)}\|a\|_{\dH^s}\|\om_{\f12}\|_{B^{\f23\s}_{3,3}}\\
&\lesssim &c_{j,\f65}2^{-\f{j}2}\|a\|_{\dH^s}\bigl\|\om_{\frac34}\bigr\|_{\dH^\s}^{\f23}.
\eeno
Along the same line, it is easy to check that the other two
terms in $A(a,\om)$ satisfy the same estimate. This leads to
 \beq\label{b.2}
 \|A(a,\om)\|_{ B^{\f12}_{\frac65,\frac65}} \lesssim  \|a\|_{\dH^s}\bigl\|\om_{\frac34}\bigr\|_{\dH^\s}^{\f23}.
 \eeq
While it follows from Lemma \ref{embeda} that
 $$
B^{\f12}_{\frac65,\frac65} \subset
L^{\frac65}_{\rm h}\bigl((B^{\f12}_{\frac65,\frac65})_{\rm v}\bigr).
 $$
Sobolev type embedding theorem (see for instance Theorem~2.40
of\ccite{BCD}) claims that
 $$
 B^{\f12}_{\frac65,\frac65}(\R) \subset B^0_{3,2}(\R) \subset L^3(\R).
 $$
 As a consequence, by virtue of \eqref{b.2}, we obtain
 \beno
 \Bigl|
\int_{\R^3}
\partial_{\rm h}\D_{\rm h}^{-1}f A(a,\om) \, dx\Bigr| & \lesssim &
\|\partial_{\rm h}\D_{\rm h}^{-1}f\|_{L^{\f32}_v(L^6_{\rm h})}\|A(a,\om)\|_{L^3_{\rm v}(L^{\f65}_h)}\\
&\lesssim&
\|f\|_{L^{\f32}}\|A(a,\om)\|_{L^{\f65}_h(L^3_{\rm v})}\\
& \lesssim &
\|f\|_{L^{\f32}}\|a\|_{\dH^s}\bigl\|\om_{\frac34}\bigr\|_{\dH^\s}^{\f23},
\eeno which together with \refeq{anositropiclemmademoeq2} leads to
the first inequality of \eqref{b.1}.

 In order to prove the second inequality of \eqref{b.1}, we observe that
 $$
\| \nabla_h\D_{\rm h}^{-1} f\|_{\dH^{\frac12+\theta,-\theta} }\lesssim
\|f\|_{\dH^{-\frac12+\theta,-\theta} }=\|f\|_{\cH_\theta}.
 $$
 Thus thanks to \eqref{b.2}, for $\th$ given by the lemma, what we only need to prove now is that
 \beq\label{b.3}
B^{\f12}_{\frac65,\frac65}\subset \dH^{-\frac12-\theta,\theta}. \eeq
 As a matter of fact, using Lemma \ref{isoaniso} and Lemma \ref{lemBern}, we have, for any~$\alpha$ in~$\Bigl]\ds0,\frac12\Bigr[$,
\beno
B^{\f12}_{\frac65,\frac65} & \subset &  \bigl(B^{\f12-\al}_{\frac65,\frac65}\bigr)_{\rm h}\bigl(B^{\al}_{\frac65,\frac65}\bigr)_{\rm v}
\andf\\
 \bigl(B^{\f12-\al}_{\frac65,\frac65}\bigr)_{\rm h}\bigl(B^{\al}_{\frac65,\frac65}\bigr)_{\rm v}
  & \subset &    \bigl(B^{\f12-\al-2\left(\frac56-\frac12\right)}_{2,2}\bigr)_{\rm h}\bigl(B^{\al-\left(\frac56-\frac12\right)}_{2,2})\bigr)_{\rm v}\\
    & \subset &
 \dH^{-\al-\frac16,\al-\frac13}.
\eeno
 Choosing $\ds \al = \frac13+\theta$ gives \eqref{b.3} because
$\theta$ is less than~$\ds\frac16$.  This completes the proof of the
lemma.
 \end{proof}

\medbreak The estimate of~$F_2(t)$ uses the Biot-Savart's law in the
horizontal variables  (namely\refeq{a.1}) and
Lemma\refer{anositropiclemma} with~$f=\partial_3\om$,~$a=v^3$. This
gives for any time~$t<T^\star$ and $\s$ in~$ ]3/4,1[$ that
 $$
 \longformule{
I_\om(t)\eqdefa\Bigl|\int_{\R^3}\bigl(\p_2v^3(t,x)\p_3v_{\rm
curl}^1(t,x)-\p_1v^3(t,x)\p_3v_{\rm curl}^2(t,x) \bigr) \om_{\frac 1
2}(t,x)\,dx\Bigr| }{{}  \lesssim  \|\p_3\om(t) \|_{L^{\f32}} \|
v^3(t)\|_{\dH^{\f32-\f23\s}}
\bigl\|\om_{\frac34}(t)\bigr\|_{\dH^\s}^{\frac23}. }
 $$
By virtue of \eqref{estimbasomega34} and of the interpolation
inequalities between~$L^2$ and~$H^1$, we thus obtain
$$ I_\om(t) \lesssim  \|v^3(t)\|_{H^{\frac12+2\left (\frac12 -\frac\s3\right) }}
\bigl\|\om_{\frac34}(t)\bigr\|_{L^2} ^{2\left (\frac12
-\frac\s3\right)}\bigl\|\nabla\om_{\frac34}(t)\bigr\|_{L^2}^{2\left(\frac12
+\frac\s3\right)}.
 $$
Choosing~$\ds \s = 3\Bigl(\frac 12 -\frac1p\Bigr)$, which is between $3/4$ and~$1$ because~$p$  is between~$4$ and~$6$, gives
$$
I_\om(t)\lesssim \|v^3(t)\|_{H^{\frac12+ \frac 2 p }}
\bigl\|\om_{\frac34}(t)\bigr\|_{L^2} ^{\frac 2p
}\bigl\|\nabla\om_{\frac34}(t)\bigr\|_{L^2}^{2\left(1
-\frac1p\right)} .
$$
Then  by using convexity inequality and time integration, we get
 \beq
 \label{a.10}
 |F_2(t)| \leq \frac19 \int_0^t\bigl\|\nabla \om_{\frac34}(t')\bigr\|_{L^2}^2
 dt' +C \int_0^t  \|v^3(t')\|_{\dH^{\frac12+\frac 2p}}^p\bigl\|\om_{\frac34}(t')\bigr\|_{L^2}
 ^{2}\,dt'.
 \eeq

In order to estimate~$F_3(t)$, we write that
$$
\longformule{ F_3(t) =- \int_0^t
\int_{\R^3}\Bigl(\p_2v^3(t',x)(\p_1\D_{\rm h}^{-1}\p_3^2v^3)(t',x) }
{{} -\p_1v^3(t',x)(\p_2\D_{\rm h}^{-1}\p_3^2v^3)(t',x) \Bigr)\,
\om_{\frac 1 2}(t',x)dxdt'. }$$
As~$\ds \f 2p=1 -\f{2\s}3$, thanks to
interpolation inequality between Sobolev spaces,  we get, by
applying Lemma\refer{anositropiclemma} with~$f=\partial_3^2v^3$ and
~$a=v^3,$
 that
\beno
 |F_3(t)| & \lesssim & \int_0^t \|\p^2_3v^3(t')
\|_{\cH_\th}\| v^3(t')\|_{\dH^{\f32-\f23\s}}
\bigl\|\om_{\frac34}(t')\bigr\|_{\dH^\s}^{\frac23}\,dt'\\
& \lesssim & \int_0^t\|\p^2_3v^3(t') \|_{\cH_\th}\|
v^3(t')\|_{\dH^{\f12+\f2p}}\bigl\|\om_{\frac34}(t')\bigr\|_{L^2}^{\f23(1-\s)}
\bigl\|\na\om_{\frac34}(t')\bigr\|_{L^2}^{\frac23\s}\,dt'\\
& \lesssim & \int_0^t\|\p^2_3v^3(t') \|_{\cH_\th}
\|v^3(t')\|^{\frac p 6}_{\dH^{\f12+\f2p}}\\
&&\qquad\qquad\qquad{}\times\bigl(\|v^3(t')\|^{p}_{\dH^{\f12+\f2p}}\bigl\|\om_{\frac34}(t')\bigr\|^2_{L^2}\bigr)^{\f
1p-\frac 16}
\bigl\|\na\om_{\frac34}(t')\bigr\|_{L^2}^{2\left(\frac12-\frac1p\right)}\,dt'.
\eeno As we have
$$
\frac 1 2+ \frac 1 6 +\biggl(\frac 1 p-\frac1 6\biggr) +\biggl(\frac 12 -\frac1p\biggr)=1,
$$
 applying H\"older inequality ensures that
$$
\longformule{
 |F_3(t)| \lesssim \Bigl(\int_0^t \|\p^2_3v^3(t')\|^{2}_{\cH_\th}\,dt'\Bigr)^{\frac 1{2}}
  \Bigl(\int_0^t \|v^3(t')\|^{p}_{\dH^{\f12+\f2p}}\,dt' \Bigr)^{\frac1{6}}
} { {}\times  \Bigl(\int_0^t \|v^3(t')\|_{H^{\frac12+\frac
2p}}^p\bigl\|\om_{\frac34}(t')\bigr\|_{L^2}^{2}\,dt'\Bigr)^{\frac1{p}-\f16}
\Bigl(\int_0^t \bigl\|\nabla \om_{\frac 3 4}(t')\bigr\|^2_{L^2}
dt'\Bigr)^{\f12-\f1p}. }
$$
Applying the convexity inequality
$$
\displaylines{ a_1a_2a_3 \leq \frac 1{p_1}  a^{p_1}+\frac 1{p_2}
a^{p_2}+\frac 1{p_3}  a^{p_3}\with\cr a_1= C\Bigl(\int_0^t
\|\p^2_3v^3(t')|^{2}_{\cH_\th}\,dt'\Bigr)^{\frac 1{2}}
  \Bigl(\int_0^t \|v^3(t')\|^{p}_{\dH^{\f12+\f2p}}\,dt' \Bigr)^{\frac1{6}},\cr
  a_2= \Bigl(9^{\frac {3(p-2)}{6-p}}\int_0^t
\|v^3(t')\|_{H^{\frac12+\frac
2p}}^p\bigl\|\om_{\frac34}(t')\bigr\|_{L^2}^{2}\,dt'\Bigr)^{\frac1{p}-\f16},\cr
 a_3  = \Bigl(\frac19\int_0^t \bigl\|\nabla \om_{\frac 3
4}(t')\bigr\|^2_{L^2} dt'\Bigr)^{\f12-\f1p}\andf\cr \frac 1 {p_1}=
\frac 2 3\,\virgp\ \ \frac 1 {p_2} =  \frac 1p-\frac 16\andf \frac 1
{p_3} = \frac 12-\frac1p }
$$
 leads to
 \beq
\label{a.8}
\begin{split}
 |F_3(t)| \leq\,& \frac 1 9  \int_0^t
\bigl\|\nabla \om_{\frac 3 4}(t')\bigr\|^2_{L^2} dt' +C \int_0^t
\|v^3(t')\|_{\dH^{\frac12+\frac
2p}}^p\bigl\|\om_{\frac34}(t')\bigr\|_{L^2}
 ^{2}\,dt' \\
&\qquad\qquad\qquad{}+ C\Bigl(\int_0^t \|
v^3(t')\|^{p}_{\dH^{\f12+\f2p}}\,dt' \Bigr)^{\frac {1}{4}}
\Bigl(\int_0^t \|\p^2_3v^3(t') \|^{2}_{\cH_\th}\,dt'\Bigr)^{\frac
{3}{4}}. \end{split} \eeq

\begin{proof}[Conclusion of the proof to
Proposition\refer{inegfondvroticity2D3D}] Resuming the estimates
\eqref{a.9}, \eqref{a.10} and \eqref{a.8} into
\eqref{theoxplosaniscalingdemoeq1}, we obtain \beno \frac 2 3
\bigl\|\om_{\frac34}(t) \bigr\|_{L^2}^2  + \frac59\int_0^t \|\nabla
\om_{\frac 34}(t')\|_{L^2}^2\,dt' & &
\\
&&
\!\!\!\!\!\!\!\!\!\!\!\!\!\!\!\!\!\!\!\!\!\!\!\!\!\!\!\!\!\!\!\!\!\!\!\!\!\!\!\!\!\!\!\!\!\!\!\!\!\!\!\!\!\!\!\!\!\!\!\!\!\!\!\!\!\!\!\!\!\!\!\!\!\!\!\!\!\!\!\!\!\!\!\!\!\!\!\!\!
\leq \frac 2 3 \bigl\||\om_0|^{\frac 3 4} \bigr\|_{L^2}^2{}+
C\Bigl(\int_0^t \| v^3(t')\|^{p}_{\dH^{\f12+\f2p}}\,dt'
\Bigr)^{\frac {1}{4}} \Bigl(\int_0^t \|\p^2_3v^3(t')
\|^{2}_{\cH_\th}\,dt'\Bigr)^{\frac {3}{4}}\\
&&
\quad
{}+C\int_0^t\|
v^3(t')\|_{\dH^{\f12+\f2p}}^{p}
\bigl\|\om_{\frac34}(t')\bigr\|_{L^2}^{2}\,dt'.
\eeno
 Inequality~\eqref{a.10qp} follows from Gronwall lemma once
notice that ~$x^{\frac 14} e^{Cx}\lesssim e^{C'x}$ for~$C'>C$.
\end{proof}

\setcounter{equation}{0}
\section{Proof of the  estimate for the second vertical derivatives of $v^3$}
\label{Sectionestimatedivh}

In this section, we shall present the proof of Proposition
\ref{estimadivhaniso}. Let $\cH_\th$ be given by
Definition\refer{def2.1ad}.  We  get, by taking the $\cH_\th$ inner
product of the $\p_3v^3$ equation of $(\wt{NS})$ with $\p_3v^3,$
that \beq \label{b.4pu}
\begin{split}
\f12\f{d}{dt}\|\p_3v^3(t)\|_{\cH_\th}^2+&\|\na\p_3v^3(t)\|_{\cH_\th}^2 =-\sum_{n=1}^3 \bigl(Q_n(v,v) \, |\, \p_3v^3\bigr)_{\cH_\th}\with\\
 Q_1(v,v) & \eqdefa \bigl(\Id+\partial_3^2\D^{-1} \bigr)(\partial_3v^3)^2 +\partial_3^2\D^{-1}
  \biggl(\sum_{\ell,m=1}^2\partial_\ell v^m\partial_m  v^\ell\biggr) \,,\\
 Q_2(v,v) & \eqdefa  \bigl(\Id+2\partial_3^2\D^{-1} \bigr)\biggl(\sum_{\ell=1}^2 \partial_3 v^\ell
 \partial_\ell v^3\biggr)\andf\\
 Q_3(v,v) & \eqdefa v\cdot \nabla\partial_3 v^3.
 \end{split}
 \eeq
 The  estimate involving~$Q_1$ relies on the
 the following lemma.
 \begin{lem}
\label{estimdivhlemme1}
{\sl Let~$A$ be a bounded Fourier multiplier. If~$p$  and~$\theta$ satisfy
 \beq
\label{Conditionenergyaniso} 0<\th<\f12-\f1p\,\virgp
 \eeq
 then we have
 $$
 \bigl|\bigl(A(D) (fg) \,|\,\partial_3v^3\bigr)_{\cH_\theta}\bigr| \lesssim \|f\|_{H^{\theta,\frac12-\theta-\frac1p} }
\|g\|_{H^{\theta,\frac12-\theta-\frac1p} } \|v^3\|_{H^{\frac12+\frac 2p}}.
 $$
 }
 \end{lem}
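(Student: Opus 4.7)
The plan is to reduce the inner-product estimate to an anisotropic product law in Sobolev spaces via Plancherel and a carefully chosen Cauchy--Schwarz split of the Fourier weights. First I would use Plancherel to rewrite
$$
\bigl(A(D)(fg)\,|\,\partial_3 v^3\bigr)_{\cH_\theta}
= \int_{\R^3} A(\xi)\,\xi_3\,|\xi_{\rm h}|^{-1+2\theta}|\xi_3|^{-2\theta}\,\widehat{fg}(\xi)\,\overline{\widehat{v^3}(\xi)}\,d\xi.
$$
Since $v^3$ enters only through its isotropic $\dH^{\frac12+\frac2p}$ norm, I would split the weight as $|\xi|^{\frac12+\frac2p}\cdot|\xi|^{-\frac12-\frac2p}$, sending the positive power to $\widehat{v^3}$ and keeping the negative one on the $fg$ side.

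Next, Cauchy--Schwarz produces $\|v^3\|_{\dH^{\frac12+\frac2p}}$ from the $\widehat{v^3}$ factor, and on the other side a weighted $L^2$ norm of $\widehat{fg}$ with weight of size $|\xi_{\rm h}|^{-1+2\theta}|\xi_3|^{1-2\theta}|\xi|^{-\frac12-\frac2p}$ (the $A(\xi)$ factor being harmless since $A$ is a bounded multiplier). The key pointwise observation is that $|\xi|\geq|\xi_3|$, so $|\xi|^{-\frac12-\frac2p}\leq|\xi_3|^{-\frac12-\frac2p}$ globally, which converts the mixed isotropic/anisotropic weight into the purely anisotropic one $|\xi_{\rm h}|^{-1+2\theta}|\xi_3|^{\frac12-2\theta-\frac2p}$. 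Therefore
$$
\bigl|\bigl(A(D)(fg)\,|\,\partial_3 v^3\bigr)_{\cH_\theta}\bigr|
\lesssim \|fg\|_{H^{-1+2\theta,\,\frac12-2\theta-\frac2p}}\,\|v^3\|_{\dH^{\frac12+\frac2p}}.
$$

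It then remains to apply the anisotropic product law (Lemma~\ref{lem2.2qw}) with $p_1=p_2=q=2$, $s_1=s_2=\theta$ and $\sigma_1=\sigma_2=\frac12-\theta-\frac1p$, which yields exactly
$$
\|fg\|_{H^{-1+2\theta,\,\frac12-2\theta-\frac2p}}\lesssim \|f\|_{H^{\theta,\frac12-\theta-\frac1p}}\|g\|_{H^{\theta,\frac12-\theta-\frac1p}}.
$$
The hypotheses of that lemma reduce to $\theta<1$, $2\theta>0$, $\frac12-\theta-\frac1p<\frac12$, and $1-2\theta-\frac2p>0$; the last inequality is precisely the assumption \eqref{Conditionenergyaniso}, while the others are automatic. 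Combining the two displayed inequalities gives the claim.

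The only mildly delicate point in this plan is verifying that the algebraic bookkeeping of the anisotropic indices matches on both sides of the product law: horizontal regularity $\theta+\theta-1=-1+2\theta$ and vertical regularity $(\frac12-\theta-\frac1p)+(\frac12-\theta-\frac1p)-\frac12=\frac12-2\theta-\frac2p$ have to agree with what comes out of Cauchy--Schwarz, and the strictness of \eqref{Conditionenergyaniso} is what guarantees positivity of the sum of the vertical indices so that Lemma~\ref{lem2.2qw} is applicable; beyond that, the argument is a one-line Plancherel computation combined with that product estimate.
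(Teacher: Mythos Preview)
Your argument is correct and is essentially the paper's own proof: the paper performs the same Cauchy--Schwarz split of the $\cH_\theta$ weight (with $\alpha=0$, $\beta=-\frac12+\frac2p$) to reach $\|fg\|_{H^{-1+2\theta,\frac12-2\theta-\frac2p}}\|\partial_3 v^3\|_{H^{0,-\frac12+\frac2p}}$, then bounds the second factor by $\|v^3\|_{H^{\frac12+\frac2p}}$ via $|\xi_3|\leq|\xi|$ and applies the same product law with the same indices. The only cosmetic difference is that you first split off the isotropic weight $|\xi|^{\frac12+\frac2p}$ and then use $|\xi|\geq|\xi_3|$ on the $fg$ side, whereas the paper does the anisotropic split first and uses $|\xi_3|\leq|\xi|$ on the $v^3$ side; these are the same inequality used at the same place.
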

 \begin{proof}
 Let us first observe that, for any couple~$(\al,\beta)$ in~$\R^2$, we have, thanks to
Cauchy-Schwartz inequality, that, for any real valued function~$a$
and~$b$, \ben \bigl|(a|b)_{\cH_\theta}\bigr|  &  =  &
\Bigl|\int_{\R^3} |\xi_{\rm h}|^{-1+2\theta-\alpha}
 |\xi_3|^{-2\theta-\beta} \wh a(\xi)|\xi_{\rm h}|^{\alpha}  \,|\xi_3|^{\beta} \wh b(-\xi)d\xi\Bigr|\nonumber\\
\label{estimdivhlemme1demoeq1}
 & \leq & \|a\|_{H^{-1+2\theta-\alpha, -2\theta-\beta}} \|b\|_{H^{\alpha, \beta}} .
\een  As~$A(D)$ is  a bounded Fourier multiplier, applying
\eqref{estimdivhlemme1demoeq1} with~$\al= 0$ and~$\ds\beta= -\frac12
+\frac2p$, we obtain \beq \label{estimdivhlemme1demoeq2}
\bigl|\bigl(A(D) (fg) \,|\,\partial_3v^3\bigr)_{\cH_\theta}\bigr|
\lesssim \|fg\|_{H^{-1+2\theta,\frac12-\frac2p-2\theta}}
\|\partial_3v^3\|_{H^{0,-\frac12+\frac 2p}} \eeq
 Because~$H^{s,s'} = \bigl(B^s_{2,2}\bigr)_{\rm h} \bigl(B^{s'}_{2,2}\bigr)_{\rm v }$
  and thanks to Condition\refeq{Conditionenergyaniso}, law of products of Lemma\refer{lem2.2qw} implies in particular  that
 $$
  \|fg\|_{H^{-1+2\theta,\frac12-\frac2p-2\theta}}  \lesssim \|f\|_{H^{\theta, \frac12-\theta-\frac 1p}}
  \|g\|_{H^{\theta, \frac12-\theta-\frac 1p}}.
 $$
 As~$\|\partial_3v^3\|_{H^{0,-\frac12+\frac 2p}} \lesssim \|v^3\|_{H^{0,\frac12+\frac 2p}}\leq \|v^3\|_{H^{\frac12+\frac 2p}}$,  the lemma is proved.
 \end{proof}

 \medbreak
  Because both~$\p_3^2\D^{-1}$ and $\partial_{\rm h}^2\D_{\rm h}^{-1}$ are bounded Fourier
  multipliers,
 applying Lemma \ref{estimdivhlemme1} with~$f$ and~$g$ of the form~$\partial_{\rm h} v^{\rm h}_{\rm curl}$ or~$\partial_{\rm h} v^{\rm h}_{\rm div}$
  or with~$f=g=\partial_3v^3$ gives,
 $$
\bigl| \bigl(Q_1(v,v) \, |\, \p_3v^3\bigr)_{\cH_\th}\bigr| \lesssim
\|v^3\|_{H^{\frac12+\frac2p}} \bigl(
 \|\om\|_{H^{\theta, \frac12-\theta-\frac 1p}}^2 +\|\partial_3v^3\|_{H^{\theta, \frac12-\theta-\frac 1p}}^2\bigr).
 $$
Because of Condition\refeq{Conditionenergyaniso}, we get, by using
Lemma \ref{isoaniso} and Lemma\refer{BiotSavartomega}, that
$$
\|\om\|_{H^{\theta, \frac12-\theta-\frac 1p}}\leq
\|\om\|_{H^{\frac12-\frac 1p}}\lesssim \bigl\|\,
\om_{\frac34}\bigr\|_{L^2}^{ \frac {p+3}{ 3p} } \bigl\|\nabla
\om_{\frac34}\bigr\|_{L^2} ^{1-\frac1p}.
$$
While it follows from Definition \ref{def2.1ad} that \beno
\|a\|_{H^{\theta, \frac12-\theta-\frac 1p}}^2&=&\int_{\R^3}|\xi_{\rm
h}|^{2\th}|\xi_3|^{1-2\th-\frac2p}|\widehat{a}(\xi)|^2\,d\xi\\
&\leq&\int_{\R^3}|\widehat{a}(\xi)|^{\frac2p}\bigl(|\xi||\widehat{a}(\xi)|\bigr)^{2\bigl(1-\frac1p\bigr)}
|\xi_{\rm h}|^{2(-\f12+\th)}|\xi_3|^{-2\th}\,d\xi. \eeno Applying
H\"older's inequality with measure $|\xi_{\rm
h}|^{2(-\f12+\th)}|\xi_3|^{-2\th}\,d\xi$ yields
$$
\|a\|_{H^{\theta, \frac12-\theta-\frac 1p}} \leq \|a\|_{\cH_\theta}^{\frac 1p} \|\nabla a\|_{\cH_\theta}^{1-\frac 1p},
$$
We then infer that
$$
 \bigl|\bigl(Q_1(v,v) \, |\, \p_3v^3\bigr)_{\cH_\th}\bigr| \lesssim \|v^3\|_{H^{\frac12+\frac2p}} \Bigl(
\bigl\|\, \om_{\frac34}\bigr\|_{L^2}^{ \frac {2(p+3)}{ 3p} }
\bigl\|\nabla \om_{\frac34}\bigr\|_{L^2} ^{2-\frac2p}
+\|\partial_3v^3\|_{\cH_\theta}^{\frac 2p} \|\nabla
\partial_3v^3\|_{\cH_\theta}^{2-\frac 2p}\Bigr).
$$
Convexity inequality ensures \beq \label{estimatedivhdemoeq1}
\begin{split}
 \bigl|\bigl(Q_1(v,v) \, |\, \p_3v^3\bigr)_{\cH_\th}\bigr| &\leq \frac 1 {6} \|\nabla \partial_3v^3\|_{\cH_\theta}^2
 + C \|v^3\|_{H^{\frac12+\frac2p}}^p \|\partial_3v^3\|_{\cH_\theta}^2 \\
&\qquad\qquad\qquad{} + C\|v^3\|_{H^{\frac12+\frac2p}} \bigl\|\,
\om_{\frac34}\bigr\|_{L^2}^{ \frac {2(p+3)}{ 3p} } \bigl\|\nabla
\om_{\frac34}\bigr\|_{L^2} ^{2-\frac2p}.
\end{split}
\eeq

\medbreak The estimates of the two terms involving~$Q_2(v,v)$
and~$Q_3(v,v)$ rely on the following lemma.
\begin{lem}
\label{estimdivhlemme2} {\sl  Let~$A$ be a bounded Fourier
multiplier.  If ~$p$  and~$\theta$ satisfy
Condition\refeq{Conditionenergyaniso} and $\th<\f2p.$ We have,
for~$\ell$ in~$\{1,2\}$,
 $$
\longformule{
 \bigl|\bigl(A(D) (v^\ell\partial_\ell \partial_3v^3)\,|\,\partial_3v^3\bigr)_{\cH_\theta}\bigr|\lesssim
\|v^3\|_{H^{\frac12+\frac2p}} } { {}\times \Bigl( \bigl\|\,
\om_{\frac34}\bigr\|_{L^2}^{ \frac 1 3 +\frac2 p } \bigl\|\nabla
\om_{\frac34}\bigr\|_{L^2} ^{1-\frac2p}
+\|\partial_3v^3\|_{\cH_\theta}^{\frac 2p} \|\nabla
\partial_3v^3\|_{\cH_\theta}^{1-\frac 2p}\Bigr)
\|\nabla\partial_3v^3\|_{\cH_\theta}. }
$$
}
\end{lem}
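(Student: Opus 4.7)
The plan is to follow the strategy of the proof of Lemma~\ref{estimdivhlemme1}: apply the anisotropic Cauchy--Schwarz identity \eqref{estimdivhlemme1demoeq1} to peel off one factor of $v^3$ in $\dH^{\f12+\f2p}$, and then handle the remaining bilinear quantity via the anisotropic product law. Specifically I choose $(\alpha,\beta)=\bigl(0,-\f12+\f2p\bigr)$ in \eqref{estimdivhlemme1demoeq1}; using that $A(D)$ is a bounded Fourier multiplier and that $\|\p_3v^3\|_{H^{0,-1/2+2/p}}\leq\|v^3\|_{\dH^{1/2+2/p}}$, this yields
$$
\bigl|\bigl(A(D)(v^\ell\p_\ell\p_3v^3)\,|\,\p_3v^3\bigr)_{\cH_\theta}\bigr|\lesssim\|v^\ell\p_\ell\p_3v^3\|_{H^{-1+2\theta,\,\f12-\f2p-2\theta}}\,\|v^3\|_{\dH^{\f12+\f2p}}.
$$
To the bilinear norm above I would then apply Lemma~\ref{lem2.2qw} with $p_1=p_2=q=2$, splitting the Sobolev indices as $(s_1^f,\sigma_1^f)=\bigl(\f12+\theta,\,1-\f2p-\theta\bigr)$ on the $v^\ell$ factor and $(s_2^f,\sigma_2^f)=\bigl(-\f12+\theta,-\theta\bigr)$ on the $\p_\ell\p_3 v^3$ factor; the hypotheses $\f12-\f2p<\theta<\f16$ and $\theta<\f2p$ are exactly what is needed to verify all the conditions $s_i^f<1$, $\sigma_i^f<\f12$, $s_1^f+s_2^f>0$, $\sigma_1^f+\sigma_2^f>0$ required by that lemma. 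The second factor is absorbed at once via $\|\p_\ell\p_3 v^3\|_{H^{-1/2+\theta,-\theta}}\leq\|\nabla_{\rm h}\p_3v^3\|_{\cH_\theta}\leq\|\nabla\p_3v^3\|_{\cH_\theta}$, reducing the task to bounding $\|v^\ell\|_{H^{1/2+\theta,\,1-2/p-\theta}}$ by the two-term sum of the conclusion.

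To estimate this last norm I use the horizontal Biot--Savart decomposition \eqref{a.1wrt}, $v^\ell=v^\ell_{\rm curl}+v^\ell_{\rm div}$. The divergence part is straightforward: the identity $v^\ell_{\rm div}=-\p_\ell\D_{\rm h}^{-1}\p_3v^3$ gives $\|v^\ell_{\rm div}\|_{H^{1/2+\theta,\,1-2/p-\theta}}\sim\|\p_3v^3\|_{H^{-1/2+\theta,\,1-2/p-\theta}}$. Since the vertical index $1-\f2p-\theta$ is the convex combination of $-\theta$ (the vertical index of $\cH_\theta$) and $1-\theta$ (one extra $\p_3$) with weight $1-\f2p$, H\"older's inequality in the Fourier variable immediately delivers
$$
\|v^\ell_{\rm div}\|_{H^{1/2+\theta,\,1-2/p-\theta}}\lesssim\|\p_3v^3\|_{\cH_\theta}^{2/p}\|\p_3^2v^3\|_{\cH_\theta}^{1-2/p}\leq\|\p_3v^3\|_{\cH_\theta}^{2/p}\|\nabla\p_3v^3\|_{\cH_\theta}^{1-2/p},
$$
which is the second summand in the conclusion.

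The curl part is where the main technical difficulty lies. Since $v^\ell_{\rm curl}=\pm\p_{3-\ell}\D_{\rm h}^{-1}\om$, the task becomes estimating $\|\om\|_{H^{-1/2+\theta,\,1-2/p-\theta}}$, an anisotropic Sobolev norm whose negative horizontal index cannot be controlled by any isotropic bound via \eqref{isoanisoinclud} alone. I expect to circumvent this by imitating the proof of Proposition~\ref{BiotSavartBesovaniso}: first use the anisotropic Bernstein inclusion \eqref{inclusionSobolevtypeaniso} with $(p_2,p_1)=\bigl(\f95,2\bigr)$ to convert this norm into an $L^{9/5}$-based anisotropic Besov quantity; then invoke Lemma~\ref{isoaniso} (after a harmless $\e$-loss in the vertical summability index) to absorb the vertical direction into the isotropic Besov norm $\|\om\|_{B^{1/2-2/p}_{9/5,1}}$; and finally apply the dyadic argument that produced \eqref{BiotSavartomegademoeq2} together with Lemma~\ref{BiotSavartomega} at isotropic order $s=\f12-\f2p$ to obtain
$$
\|\om\|_{B^{1/2-2/p}_{9/5,1}}\lesssim\|\om_{\f34}\|_{L^2}^{1/3+2/p}\,\|\nabla\om_{\f34}\|_{L^2}^{1-2/p},
$$
which matches the first summand in the conclusion. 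The delicate point is keeping every intermediate index admissible throughout this chain of embeddings; the ranges $\theta\in\bigl(\f12-\f2p,\f16\bigr)$, $\theta<\f2p$, and $p\in(4,6)$ are used precisely to keep $s=\f12-\f2p$ inside the window $[-\f12,\f56]$ of Lemma~\ref{BiotSavartomega} and to preserve the positivity of the intermediate vertical Besov indices.
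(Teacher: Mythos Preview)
Your opening moves are fine and match the paper: you correctly apply \eqref{estimdivhlemme1demoeq1} with $(\alpha,\beta)=(0,-\tfrac12+\tfrac2p)$ to peel off $\|v^3\|_{H^{1/2+2/p}}$, and your verification of the hypotheses of Lemma~\ref{lem2.2qw} for the split $(s_1,\sigma_1)=(\tfrac12+\theta,\,1-\tfrac2p-\theta)$ on $v^\ell$ and $(s_2,\sigma_2)=(-\tfrac12+\theta,-\theta)$ on $\partial_\ell\partial_3v^3$ is correct. The divergence part then goes through exactly as you describe.

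The gap is in the curl part. After horizontal Biot--Savart you must control $\|\omega\|_{H^{-1/2+\theta,\,1-2/p-\theta}}$, whose \emph{horizontal} index $-\tfrac12+\theta$ is strictly negative (since $\theta<\tfrac16$). Your proposed route---Bernstein inclusion \eqref{inclusionSobolevtypeaniso} down to $L^{9/5}$, then Lemma~\ref{isoaniso}, then the dyadic interpolation---does not run: after the Bernstein step the horizontal index becomes $-\tfrac{7}{18}+\theta$, still negative, and Lemma~\ref{isoaniso} requires both anisotropic indices to be positive (its proof needs $s-\theta'>0$ for the sum over $\ell>k$ to converge). There is no general embedding of an isotropic Besov space into an anisotropic one with a negative horizontal index, so this norm on $\omega$ is simply not controlled by $\|\omega_{3/4}\|_{L^2}$ and $\|\nabla\omega_{3/4}\|_{L^2}$ through the available tools. (Incidentally, the total regularity after the Bernstein step is $\tfrac23-\tfrac2p$, not $\tfrac12-\tfrac2p$ as you wrote, but this arithmetic slip is secondary to the structural problem.)

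The paper avoids this by choosing a different split in the product law: it places $v^\ell$ in the \emph{endpoint} space $\bigl(B^{1}_{2,1}\bigr)_{\rm h}\bigl(B^{1/2-2/p}_{2,1}\bigr)_{\rm v}$ (using $q=1$ so that the critical value $s_1=1=\tfrac2{p_1}$ is admissible in Lemma~\ref{lem2.2qw}), and places $\partial_\ell\partial_3v^3$ in $H^{-1+2\theta,\,1/2-2\theta}$, which is bounded by $\|\nabla\partial_3v^3\|_{\cH_\theta}$ by a direct Fourier computation. The point of pushing the horizontal index on $v^\ell$ all the way up to $1$ is that, after $\nabla_{\rm h}^\perp\Delta_{\rm h}^{-1}$, the horizontal index on $\omega$ lands at $0$ rather than at something negative; Proposition~\ref{BiotSavartBesovaniso} (with $\alpha=\tfrac2p$) then applies directly and delivers both summands of the conclusion at once. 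Your Hilbert-space split with $q=2$ cannot reach this endpoint, which is why the curl term escapes.
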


\begin{proof}
Using\refeq{estimdivhlemme1demoeq2} and the law of product of
Lemma\refer{lem2.2qw} gives, \beno \bigl|\bigl(A(D)
(v^\ell\partial_\ell
\partial_3v^3)\,|\,\partial_3v^3\bigr)_{\cH_\theta}\bigr|&\leq&\|v^\ell\partial_\ell\partial_3v^3\|_{H^{-1+2\th,\f12-\frac2p-2\th}}\|\partial_3v^3\|_{H^{0,-\frac12+\frac2p}}\\
& \lesssim & \|v^\ell\|_{\bigl(B^{1}_{2,1}\bigr)_{\rm
h}\bigl(B^{\f12-\f2p}_{2,1}\bigr)_{\rm v}}
\|\partial_\ell\partial_3v^3\|_{H^{-1+2\theta,\frac12-2\theta}}\|\partial_3v^3\|_{H^{0,-\frac12+\frac2p}}\\
&\lesssim & \|v^\ell\|_{\bigl(B^{1}_{2,1}\bigr)_{\rm
h}\bigl(B^{\f12-\f2p}_{2,1}\bigr)_{\rm v}}
\|\partial_3v^3\|_{H^{2\theta,\frac12-2\theta}}\|v^3\|_{H^{\frac12+\frac2p}}.
\eeno However, notice from Definition \ref{def2.1ad} that \beno
\|\partial_3v^3\|_{H^{2\theta,\frac12-2\theta}}^2&=&\int_{\R^3}|\xi_{\rm
h}|^{4\th}|\xi_3|^{1-4\th}|\widehat{\p_3v^3}(\xi)|^2\,d\xi\\
&\leq& \int_{\R^3}|\xi_{\rm
h}|^{-1+2\th}|\xi_3|^{-2\th}|\xi|^2|\widehat{\p_3v^3}(\xi)|^2\,d\xi=\|\na\p_3v^3\|_{\cH_\th}^2.
\eeno We thus obtain \beno \bigl|\bigl(A(D) (v^\ell\partial_\ell
\partial_3v^3)\,|\,\partial_3v^3\bigr)_{\cH_\theta}\bigr|
&\lesssim & \|v^\ell\|_{\bigl(B^{1}_{2,1}\bigr)_{\rm
h}\bigl(B^{\f12-\f2p}_{2,1}\bigr)_{\rm v}}
\|\nabla\partial_3v^3\|_{\cH_\th}\|v^3\|_{H^{\frac12+\frac2p}}.
\eeno Then Proposition\refer{BiotSavartBesovaniso} leads to the
result.
\end{proof}

\medbreak In order to
estimate~$\bigl(Q_2(v,v)\,|\,\partial_3v^3\bigr)_{\cH_\theta}$, we
write that
\ben
\bigl( (\Id +2\p_3^2\D^{-1}) ( \partial_3v^\ell\partial_\ell v^3)\,|\,\partial_3v^3\bigr)_{\cH_\theta}& = & \cA_1(v^\ell,v^3)+\cA_2(v^\ell,v^3)\with\nonumber\\
\label{estimatedivhdemoeq2-1}
\cA_1(v^\ell,v^3) & \eqdefa &  -\bigl( (\Id +2\p_3^2\D^{-1}) (v^\ell\partial_\ell v^3)\,|\,\partial^2_3v^3\bigr)_{\cH_\theta}\andf\\
\cA_2(v^\ell,v^3) & \eqdefa &  -\bigl( (\Id +2\p_3^2\D^{-1})
(v^\ell\partial_\ell
\partial_3v^3)\,|\,\partial_3v^3\bigr)_{\cH_\theta}.\nonumber \een
Law of product of Lemma\refer{lem2.2qw} implies that \beno
\bigl|\cA_1(v^\ell,v^3)\bigr| &  \lesssim & \|v^\ell\partial_\ell v^3\|_{\cH_\theta} \|\partial_3^2v^3\|_{\cH_\theta}\\
& \lesssim & \|v^\ell\|_{\bigl(B^{1}_{2,1}\bigr)_{\rm
h}\bigl(B^{\f12-\f2p}_{2,1}\bigr)_{\rm v}} \|\partial_\ell
v^3\|_{H^{-\frac 12+\theta,\frac2 p-\theta}}
\|\partial_3^2v^3\|_{\cH_\theta}. \eeno As we have~$\|\partial_\ell
v^3\|_{H^{-\frac 12+\th,\frac2 p-\theta}} \lesssim \|
v^3\|_{H^{\frac 12+\theta,\frac2 p-\theta}}\leq \|v^3\|_{H^{\frac
12+\frac2 p}}$, we infer that
$$
\bigl|\cA_1(v^\ell,v^3)\bigr| \lesssim
\|v^\ell\|_{\bigl(B^{1}_{2,1}\bigr)_{\rm
h}\bigl(B^{\f12-\f2p}_{2,1}\bigr)_{\rm v}} \|v^3\|_{H^{\frac
12+\frac2 p}} \|\partial_3^2v^3\|_{\cH_\theta}.
$$
Because of\refeq{estimatedivhdemoeq2-1},
Proposition\refer{BiotSavartBesovaniso}  and
Lemma\refer{estimdivhlemme2} ensures that
\[
\begin{split}
\bigl|\bigl(Q_2(v,v)\,|\,\partial_3v^3\bigr)_{\cH_\theta}\bigr|
&\lesssim
 \|v^3\|_{H^{\frac12+\frac2p}}\\
 &\!\!\!{}\times \Bigl(
\bigl\|\, \om_{\frac34}\bigr\|_{L^2}^{ \frac 13 +\frac2p }
\bigl\|\nabla \om_{\frac34}\bigr\|_{L^2} ^{1-\frac2p}
+\|\partial_3v^3\|_{\cH_\theta}^{\frac 2p} \|\nabla
\partial_3v^3\|_{\cH_\theta}^{1-\frac 2p}\Bigr)
 \|\nabla \partial_3v^3\|_{\cH_\theta}.
\end{split}
\] Applying convexity inequality yields
\beq
\label{estimatedivhdemoeq2}
\begin{split}
\bigl|\bigl(Q_2(v,v)\,|\,\partial_3v^3\bigr)_{\cH_\theta}\bigr|\,\leq\,
&\frac 1 {6} \|\nabla \partial_3v^3\|_{\cH_\theta}^2
 + C \|v^3\|_{H^{\frac12+\frac2p}}^p \|\partial_3v^3\|_{\cH_\theta}^2 \\
&\qquad\qquad\qquad{} + C\|v^3\|_{H^{\frac12+\frac2p}}^2 \bigl\|\,
\om_{\frac34}\bigr\|_{L^2}^{ \frac {2(p+6)}{ 3p} } \bigl\|\nabla
\om_{\frac34}\bigr\|_{L^2} ^{2\bigl(1-\frac2p\bigr)}.
\end{split}
\eeq

{} \medbreak Finally let us estimate~$\bigl(Q_3(v,v) \, |\,
\p_3v^3\bigr)_{\cH_\th}$.  Lemma\refer{estimdivhlemme2} implies that
\beq
 \label{estimatedivhdemoeq3}
\begin{split}
&\bigl|\bigl( v^{\rm h}\cdot \nabla_{\rm h} \partial_3
v^3\,|\,\partial_3v^3\bigr)_{\cH_\theta}\bigr| \lesssim
\|v^3\|_{H^{\frac12+\frac2p}}\\
&\qquad\quad\qquad {}\times \Bigl( \bigl\|\,
\om_{\frac34}\bigr\|_{L^2}^{ \frac 1 3 +\frac2 p } \bigl\|\nabla
\om_{\frac34}\bigr\|_{L^2} ^{1-\frac2p}
+\|\partial_3v^3\|_{\cH_\theta}^{\frac 2p} \|\nabla
\partial_3v^3\|_{\cH_\theta}^{1-\frac 2p}\Bigr)
\|\nabla\partial_3v^3\|_{\cH_\theta}.
\end{split}
\eeq To estimate~$\bigl( v^{3} \partial_3^2
v^3\,|\,\partial_3v^3\bigr)_{\cH_\theta}$, we write, according to
\eqref{estimdivhlemme1demoeq1}, that
$$
\bigl|(f\,|\,g)_{\cH_\theta}\bigr| \leq
\|f\|_{H^{\theta+\f2p-\f32,-\theta}}\|g\|_{H^{\f12+\theta-\f2p,-\theta}}.
$$ As $\th>\f12-\f2p,$
we get, by applying law of product  of Lemma\refer{lem2.2qw}  and then
Lemma\refer{isoaniso}, that \beno \bigl|\bigl( v^{3} \partial_3^2
v^3\,|\,\partial_3v^3\bigr)_{\cH_\theta}\bigr|
& \leq & \|v^3\partial_3^2v^3\|_{\dH^{\theta+\f2p-\f32,-\theta}}\|\partial_3v^3\|_{\dH^{\f12+\theta-\f2p,-\theta}}\\
& \lesssim &
 \|v^3\|_{\bigl(\dH^{\frac2p}\bigr)_{\rm h}\bigl(B^{\frac12}_{2,1}\bigr)_{\rm v}}
 \| \partial^2_3v^3\|_{\cH_\theta}\|\partial_3v^3\|_{\dH^{\f12+\theta-\f2p,-\theta}}  \\
 & \lesssim & \|v^3\|_{\dH^{\f12+\f2p}}\|
 \partial^2_3v^3\|_{\cH_\theta}\|\partial_3v^3\|_{\dH^{\f12+\theta-\f2p,-\theta}},
 \eeno
This along with the interpolation inequality which claims that
 \beno
\|\partial_3v^3\|_{\dH^{\f12+\theta-\f2p,-\theta}}\leq
\|\p_3v^3\|_{\cH_\th}^{\frac2p}\|\na_h\p_3v^3\|_{\cH_\th}^{1-\f2p},
\eeno ensures
 \beno  \bigl|\bigl( v^{3} \partial_3^2
v^3\,|\,\partial_3v^3\bigr)_{\cH_\theta}\bigr|\lesssim
\|v^3\|_{\dH^{\f12+\f2p}}\|\p_3v^3\|_{\cH_\th}^{\frac2p}\|
 \na\partial_3v^3\|_{\cH_\theta}^{2-\frac2p}.\eeno
Due to \eqref{estimatedivhdemoeq3} and convexity inequality, we thus
obtain \beq \label{estimatedivhdemoeq4p}
\begin{split}
\bigl|\bigl(Q_3(v,v)\,|\,\partial_3v^3\bigr)_{\cH_\theta}\bigr|\leq
\,&\,\frac 1 {6} \|\nabla \partial_3v^3\|_{\cH_\theta}^2
 + C \|v^3\|_{H^{\frac12+\frac2p}}^p \|\partial_3v^3\|_{\cH_\theta}^2 \\
&\qquad\qquad\qquad{} + C\|v^3\|_{H^{\frac12+\frac2p}}^2 \bigl\|\,
\om_{\frac34}\bigr\|_{L^2}^{ \frac {2(p+6)}{ 3p} } \bigl\|\nabla
\om_{\frac34}\bigr\|_{L^2} ^{2\bigl(1-\frac2p\bigr)}.
\end{split}
\eeq

\bigbreak


Now we are in a position to complete the proof of Proposition
\ref{estimadivhaniso}.

\medbreak

\begin{proof}[Conclusion of the proof to Proposition
\ref{estimadivhaniso}] By resuming the estimates
\eqref{estimatedivhdemoeq1}, \eqref{estimatedivhdemoeq2} and
\eqref{estimatedivhdemoeq4p} into\refeq{b.4pu}, we obtain
\beq\label{b.26}
\begin{split}
\f{d}{dt}\|\p_3v^3(t)\|_{\cH_\th}^2+&\|\na\p_3v^3(t)\|_{\cH_\th}^2\\
&{}\leq{}
C\Bigl(\|v^3\|_{\dH^{\f12+\f2p}}^2\bigl\|\om_{\frac34}\bigr\|_{L^2}^{2\bigl(\f13+\f2p\bigr)}\bigl\|\na\om_{\frac34}\bigr\|_{L^2}^{2\bigl(1-\f2p\bigr)}
\\
&{}+\|v^3\|_{\dH^{\f12+\f2p}}^p\|\p_3v^3\|_{\cH_\th}^2+\|v^3\|_{\dH^{\f12+\f2p}}\bigl\|\om_{\frac34}\bigr\|_{L^2}^{2\bigl(\f13+\f1p\bigr)}\bigl\|\na\om_{\frac34}\bigr\|_{L^2}^{2\bigl(1-\f1p\bigr)}
\Bigr).
\end{split}
\eeq
On the other hand, Inequality\refeq{initialdataHtheta}  claims that
$\|\p_3v_0^3\|_{\cH^{\th}}\lesssim\|v_0\|_{\dH^{\f12}} $. Thus
Gronwall's inequality allows to conclude  the proof of  Proposition
\ref{estimadivhaniso}.\end{proof}

\setcounter{equation}{0}
\section{The closure of the estimates to horizontal vorticity and divergence}
\label{Gronwall+}

The  main step of the proof of Proposition\refer{SophisticatedGronwall} is the proof of  the following estimate, for any~$t$ in~$[0,T^\star[$.
 \beq
 \label{b.28}
\bigl\|\om_{\f34}(t)\bigr\|_{L^2}^{2\bigl(\f{p+3}3\bigr)}+\bigl\|\na\om_{\frac34}\bigr\|_{L^2_t(L^2)}^{2\bigl(\f{p+3}3\bigr)}
\leq C \|\Om_0\|_{L^{\f32}}^{\f{p+3}2} \exp\biggl(C\exp
\Bigl(C\int_0^t\|v^3(t)\|_{\dH^{\f12+\f2p}}^p\,dt'\Bigr)\biggr).
\eeq In order to do it, let us  introduce the notation
\beq\label{b.289q} \ds e(T)\eqdefa C\exp \Bigl(C\int_0^T
\|v^3(t)\|_{\dH^{\frac 12+\frac2 p}}^p dt\Bigr). \eeq where the
constant~$C$ may change from line to line. As~$(a+b)^{\frac 34} \sim
a^{\frac34}+b^{\frac34}$, Proposition\refer{estimadivhaniso} implies
that \beq
 \label{SophisticatedGronwalldemoeq2}
\begin{split}
\Bigl(\int_0^t \|\p^2_3v^3(t')
\|^{2}_{\cH_\th}\,dt'\Bigr)^{\frac {3}{4}}e(T) \lesssim e(T)\bigl(\|\Om_0\|_{L^{\f32}}^{\f32}+V_1(t)+V_2(t)\bigr)\with\qquad\qquad\\
V_1(t) \eqdefa
 \biggl(\int_0^t\|v^3(t')\|_{H^{\frac12+\frac2p}}
\bigl\| \, \om_{\frac34}(t')\bigr\|_{L^2}^{2\left(\frac13+\frac 1
p\right)}
\bigl\|\na\om_{\frac34}(t')\bigr\|_{L^2}^{2\bigl(1-\f1p\bigr)}dt'
\biggr)^{\frac34}
\andf \\
V_2(t) \eqdefa \biggl(\int_0^t\|v^3(t')\|_{H^{\frac12+\frac2p}}^2
\bigl\| \,
\om_{\frac34}(t')\bigr\|_{L^2}^{2\left(\frac13+\frac2p\right)}
\bigl\|\na\om_{\frac34}(t')\bigr\|_{L^2}^{2\bigl(1-\f2p\bigr)}dt'
\biggr)^{\frac34}.\qquad
\end{split}
\eeq Let us estimate the two terms~$V_j(t), j=1,2$. Applying
H\"older inequality gives
 \beno
  V_1(t)  &\leq &
\biggl(\int_0^t\|v^3(t')\|_{\dH^{\frac12+\frac2p}}^p
\bigl\| \om_{\frac34}(t')\bigr\|_{L^2}^{2\left(\frac13+\frac1p\right)p}\,dt'\biggr)^{\frac34
\times\frac1 p}
\biggl(\int_0^t\bigl\|\na\om_{\frac34}(t')\bigr\|_{L^2}^{2}dt' \biggr)^{\frac34\left(1-\frac1p\right)}\\
  & \leq &   \biggl(\int_0^t\|v^3(t')\|_{\dH^{\frac12+\frac2p}}^p
\bigl\| \, \om_{\frac34}(t')\bigr\|_{L^2}^{2\bigl(\frac
{p+3}3\bigr)}\,dt'\biggr)^{\frac3{4 p}}
\biggl(\int_0^t\bigl\|\na\om_{\frac34}(t')\bigr\|_{L^2}^{2}dt'
\biggr)^{\frac34\left(1-\frac1p\right)}.
 \eeno
 As we have
$$
1-\frac34\Bigl(1-\frac1p\Bigr) = \frac {p+3}{4p}\,\virgp
$$
convexity inequality implies that, for any~$t$ in~$[0,T]$, \beq
\label{SophisticatedGronwalldemoeq3} e(T) V_1(t) \leq \frac19
\int_0^t\bigl\|\na\om_{\frac34}(t')\bigr\|_{L^2}^{2}dt' + e(T)
\biggl(\int_0^t\|v^3(t')\|_{H^{\frac12+\frac2p}}^p \bigl\|
\om_{\frac34}(t')\bigr\|_{L^2}^{2\bigl(\frac
{p+3}3\bigr)}dt'\biggr)^{\frac 3{p+3}}. \eeq Now let us estimate the
term~$V_2(t)$. Applying H\"older inequality yields \beno V_2(t)  &
\leq & \biggl(\int_0^t\|v^3(t')\|_{H^{\frac12+\frac2p}}^p \bigl\|
\om_{\frac34}(t')\bigr\|_{L^2}^{2\left(\frac13+\frac2p\right)\frac
p2}dt'\biggr)^{\frac34\times\frac2 p}
\biggl(\int_0^t\bigl\|\na\om_{\frac34}(t')\bigr\|_{L^2}^{2}dt' \biggr)^{\frac34\left(1-\frac2p\right)}\\
&  \leq &   \biggl(\int_0^t\|v^3(t')\|_{H^{\frac12+\frac2p}}^p
\bigl\|  \om_{\frac34}(t')\bigr\|_{L^2}^{2\bigl(\frac {p
+6}6\bigr)}dt'\biggr)^{\frac3{2 p}}
\biggl(\int_0^t\bigl\|\na\om_{\frac34}(t')\bigr\|_{L^2}^{2}dt'
\biggr)^{\frac34\left(1-\frac2p\right)}.
 \eeno
 As we have
$$
1-\frac34\Bigl(1-\frac2p\Bigr) = \frac {p+6}{4p}\,\virgp
$$
convexity inequality implies that
\beq
\label{SophisticatedGronwalldemoeq4} e(T) V_2(t)
 \leq \frac19
\int_0^t\bigl\|\na\om_{\frac34}(t')\bigr\|_{L^2}^{2}dt' + e(T)
\biggl(\int_0^t\|v^3(t')\|_{H^{\frac12+\frac2p}}^p
\|  \om_{\frac34}(t')\|_{L^2}^{2\bigl(\frac{p+6}6\bigr)}dt'\biggr)^{\frac 6{p+6}}.
\eeq
 Let us notice that the power of~$\bigl\|  \om_{\frac34}\bigr\|_{L^2}$ here is not the
same as that in Inequality\refeq{SophisticatedGronwalldemoeq3}.
Applying H\"older inequality with
$$
q = \frac {p+3} 3\times\frac 6{p+6} =  2\,\frac {p+3}{p+6}
$$
and with the measure~$\|v^3(t')\|_{H^{\frac12+\frac2p}}^pdt'$ gives
$$
\longformule{ \biggl(\int_0^t\|v^3(t')\|_{H^{\frac12+\frac2p}}^p
\bigl\|  \om_{\frac34}(t')\bigr\|_{L^2}^{2\bigl(\frac
{p+6}6\bigr)}dt'\biggr)^{\frac 6{p+6}} \leq
\biggl(\int_0^t\|v^3(t')\|_{H^{\frac12+\frac2p}}^pdt'\biggr)^{\bigl(1-\frac
1q\bigr)\times\frac6{p+6}} } {{}\times
\biggl(\int_0^t\|v^3(t')\|_{H^{\frac12+\frac2p}}^p \bigl\|
\om_{\frac34}(t')\bigr\|_{L^2}^{2\bigl(\frac
{p+3}3\bigr)}\,dt'\biggr)^{\frac 3{p+3}}. }
$$
By definition of~$e(T)$, we have
$$
\biggl(\int_0^t\|v^3(t')\|_{H^{\frac12+\frac2p}}^pdt'\biggr)^{\bigl(1-\frac
1q\bigr)\times\frac6{p+6}} e(T)\leq e(T).
$$
Thus we deduce from\refeq{SophisticatedGronwalldemoeq4} that
$$
e(T) V_2(t) \leq \frac19
\int_0^t\bigl\|\na\om_{\frac34}(t')\bigr\|_{L^2}^{2}dt' + e(T)
\biggl(\int_0^t\|v^3(t')\|_{H^{\frac12+\frac2p}}^p
\bigl\|\om|_{\frac34}(t')\bigr\|_{L^2}^{2\bigl(\frac
{p+3}3\bigr)}dt'\biggr)^{\frac 3{p+3}}.
$$
Plugging this inequality and\refeq{SophisticatedGronwalldemoeq3} into\refeq{SophisticatedGronwalldemoeq2} gives, for any~$t$ in~$[0,T]$,
$$
\longformule{ \Bigl(\int_0^t \|\p^2_3v^3(t')
\|^{2}_{\cH_\th}\,dt'\Bigr)^{\frac {3}{4}}e(T) \leq \frac 2 9
\int_0^t\bigl\|\na\om_{\frac34}(t')\bigr\|_{L^2}^{2}dt'+e(T)\|\Om_0\|_{L^{\f32}}^{\f32}
} {{} + e(T) \biggl(\int_0^t\|v^3(t')\|_{H^{\frac12+\frac2p}}^p
\bigl\| \om_{\frac34}(t')\bigr\|_{L^2}^{2\bigl(\frac
{p+3}3\bigr)}dt'\biggr)^{\frac 3{p+3}}. }
$$
Hence thanks to Proposition\refer{inegfondvroticity2D3D}, we deduce
that
$$
\longformule{
\frac 23  \|  \om_{\frac34}(t)\|_{L^2}^{2}
 +\frac 13 \int_0^t\|\nabla \om_{\frac34}(t')\|_{L^2}^2\,dt' \leq  \|\Om_0\|_{L^{\f32}}^{\f32}e(T)
} {{} + e(T) \biggl(\int_0^t\|v^3(t')\|_{H^{\frac12+\frac2p}}^p
\|\om_{\frac34}(t')\|_{L^2}^{2\bigl(\frac
{p+3}3\bigr)}dt'\biggr)^{\frac 3{p+3}}. }
$$
Taking the power~$\ds \frac {p+3}3$ of this inequality  and using
that ~$(a+b)^{ \frac {p+3}3} \sim a^{ \frac {p+3}3}+b^{ \frac
{p+3}3}$,  we obtain for any~$t$ in~$[0,T]$,
$$
\longformule{
 \bigl\|  \om_{\frac34}(t)\bigr\|^{2\bigl(\frac {p+3}3\bigr)}_{L^2}
 +\biggl(\int_0^t\bigl\|\nabla \om_{\frac34}(t')\bigr\|_{L^2}^2\,dt'\biggr)^{\frac{p+3}3}
  \leq \| \Om_0\|_{L^{\f32}}^{\frac {p+3}2}e(T)
 }
{{} + e(T)\int_0^t\|v^3(t')\|_{\dH^{\frac12+\frac2p}}^p
\bigl\| \om_{\frac34}(t')\bigr\|_{L^2}^{2\bigl(\frac {p+3}3\bigr)}dt'. }
$$
Then Gronwall lemma leads to Inequality \eqref{b.28}.

On the other hand, it follows from
Proposition\refer{estimadivhaniso}  that, for any $t<T^\ast,$ \beno
&& \|\p_3v^3(t)\|_{\cH_\th}^2
 +\int_0^t\|\na\p_3v^3(t')\|_{\cH_\th}^2\,dt'\\
&&\qquad\qquad \leq
 e(t)\biggl(\|\Om_0\|_{L^{\f32}}^2+\|v^3\|_{L^p_t(\dH^{\f12+\f2p})}\bigl\|\om_{\frac34}\bigr\|_{L^\infty_t(L^2)}^{2\bigl(\frac {p+3} {3p}\bigr)}
 \bigl\|\na\om_{\frac34}\bigr\|_{L^2_t(L^2)}^{2\bigl(1-\f1p\bigr)}\\
 &&\qquad\qquad\qquad\qquad\qquad\qquad\qquad{}+\|v^3\|_{L^p_t(\dH^{\f12+\f2p})}^2\bigl\|\om_{\frac34}\bigr\|_{L^\infty_t(L^2)}^{2\bigl(\frac {p+6} {3p}\bigr)}
 \bigl\|\na\om_{\frac34}\bigr\|_{L^2_t(L^2)}^{2\bigl(1-\f2p\bigr)}\biggr).
 \eeno
Resuming the estimate \eqref{b.28} into the above inequality
concludes the proof of Proposition\refer{SophisticatedGronwall}.

\setcounter{equation}{0}
\section{Proof of the end point blow up theorem}
The proof of Theorem\refer{blowupBesovendpoint} relies on the following  lemma.
\begin{lem}
\label{lemtheoblowupendpoint}
{\sl  Let~$(p_{k,\ell})_{1\leq k,\ell\leq3}$
be a sequence of~$]1,\infty[^9$ and $v=(v^1,v^2,v^3)$ be a smooth
divergence free vector field. Then for the norm $\|\cdot\|_{\cB_p}$
given by Definition \ref{def2.2}, we have
$$
\bigl|(v\cdot\nabla v|v)_{H^\frac12}\bigr| \lesssim
\sum_{k,\ell}\|\partial_\ell v^k\|_{\cB_{p_{k,\ell}}} \|v\|_{
H^{\frac12}}^{\frac2{p_{k,\ell}}} \|\nabla v\|_{
H^{\frac12}}^{2-\frac2{p_{k,\ell}}}.
$$
}
\end{lem}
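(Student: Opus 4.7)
The strategy is to exploit the divergence-free condition to rewrite the trilinear form so that the rough factor $\partial_\ell v^k$, whose $\cB_{p_{k,\ell}}$-norm is to be brought out, appears explicitly as a single factor. Since $\dive v=0$, we have $v\cdot\nabla v^k = \sum_\ell \partial_\ell(v^\ell v^k)$; integrating by parts in the $H^{1/2}$ inner product (legitimate because the symbol $|\xi|$ is translation-invariant and even) yields
\begin{equation*}
(v\cdot\nabla v\,|\,v)_{H^{1/2}} = -\sum_{k,\ell}(v^\ell v^k\,|\,\partial_\ell v^k)_{H^{1/2}}.
\end{equation*}
This is the core structural step: it isolates $\partial_\ell v^k$ so that its $\cB_{p_{k,\ell}}$ control can be used directly.

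Next, I would expand each bilinear pairing via Littlewood--Paley decomposition, $(f|g)_{H^{1/2}} \sim \sum_j 2^j (\Delta_j f|\Delta_j g)_{L^2}$, and apply H\"older's inequality in $L^1\times L^\infty$ on each dyadic block. Using the Bernstein-type estimate
\begin{equation*}
\|\Delta_j \partial_\ell v^k\|_{L^\infty} \lesssim 2^{j(2-2/p_{k,\ell})}\|\partial_\ell v^k\|_{\cB_{p_{k,\ell}}},
\end{equation*}
which is just the definition of $\cB_{p_{k,\ell}} = \pbes{-2+2/p_{k,\ell}}{\infty}{\infty}$, the problem reduces to the bilinear bound
\begin{equation*}
\|v^\ell v^k\|_{\pbes{3-2/p_{k,\ell}}{1}{1}} \lesssim \|v\|_{H^{1/2}}^{2/p_{k,\ell}} \|\nabla v\|_{H^{1/2}}^{2-2/p_{k,\ell}}.
\end{equation*}

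For this last bilinear estimate, I would apply Bony's decomposition $v^\ell v^k = T_{v^\ell}v^k + T_{v^k}v^\ell + R(v^\ell,v^k)$ and bound each piece via H\"older $L^3\times L^{3/2}$ or $L^2\times L^2$ on the dyadic blocks, using the Sobolev embedding $\dH^{1/2}\hookrightarrow L^3$ and Bernstein's inequality, so that each factor $v^\ell, v^k$ ends up at Sobolev regularity $\dH^{3/2-1/p_{k,\ell}}$; interpolation $\|w\|_{\dH^{3/2-1/p_{k,\ell}}} \lesssim \|w\|_{\dH^{1/2}}^{1/p_{k,\ell}} \|\nabla w\|_{\dH^{1/2}}^{1-1/p_{k,\ell}}$ then produces the desired exponents $2/p_{k,\ell}$ and $2-2/p_{k,\ell}$ when the two factors are combined. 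The main technical obstacle is to establish this bilinear bound uniformly across the full range $p_{k,\ell}\in\,]1,\infty[$: the natural paraproduct estimates have borderline restrictions on the regularity indices, so careful splitting of Bony's pieces with different H\"older pairings adapted to the value of $p_{k,\ell}$ will be necessary, especially near the extremes $p_{k,\ell}\to 1$ (where $\partial_\ell v^k$ is almost bounded and $v^\ell v^k$ must carry nearly all derivatives) and $p_{k,\ell}\to\infty$ (where the opposite holds).
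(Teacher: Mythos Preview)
Your integration-by-parts step is correct and is a legitimate alternative starting point. The gap is the bilinear estimate you reduce to:
\[
\|v^\ell v^k\|_{B^{3-2/p_{k,\ell}}_{1,1}} \lesssim \|v\|_{H^{1/2}}^{2/p_{k,\ell}}\|\nabla v\|_{H^{1/2}}^{2-2/p_{k,\ell}}.
\]
This fails in the homogeneous setting. Consider the paraproduct piece $T_{v^\ell}v^k$. To get an $L^1$ bound on $\D_j T_{v^\ell}v^k$ you are forced into a H\"older splitting that puts $S_{j-1}v^\ell$ in some $L^q$. But the only norms at your disposal are $\|v\|_{\dot H^{1/2}}$ and $\|\nabla v\|_{\dot H^{1/2}}$, and neither controls $\|S_{j-1}v^\ell\|_{L^2}$ (the low frequencies of a function in $\dot H^{1/2}\cap\dot H^{3/2}$ need not lie in $L^2$); if instead you place $S_{j-1}v^\ell$ in $L^3$ or $L^\infty$ you then need $\|\D_{j'}v^k\|_{L^{3/2}}$ or $\|\D_{j'}v^k\|_{L^1}$, which Bernstein cannot provide from $L^2$. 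No choice of H\"older exponents closes this term, uniformly in~$p_{k,\ell}$ or otherwise. So the problem is not at the endpoints $p_{k,\ell}\to 1,\infty$ as you suggest; it is structural.

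The paper's proof bypasses this by never separating the trilinear form into a bilinear product times a dual factor. It applies Bony's decomposition to $v^\ell\partial_\ell v^k$ itself. The piece $T(\partial_\ell v^k,v^\ell)$ is harmless because $S_{j'-1}\partial_\ell v^k$ sits in $L^\infty$, directly controlled by the $\cB_{p_{k,\ell}}$ norm. The dangerous piece is $T(v^\ell,\partial_\ell v^k)$, where the low frequencies of $v^\ell$ appear; here the paper writes $\D_j T(v^\ell,\partial_\ell v^k)=S_{j-1}v^\ell\,\partial_\ell\D_j v^k+\text{commutators}$ and uses $(S_{j-1}v\cdot\nabla\D_j v^k\mid\D_j v^k)_{L^2}=0$ to kill the main term, then estimates the commutators, which carry an extra derivative on the low-frequency factor and are therefore controllable. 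The remainder $R$ is handled by pulling the divergence out and reconstituting a derivative via the identity $\D_{j'}v^k=\sum_{\ell'}2^{-j'}\wt\D_{j'}^{\ell'}\partial_{\ell'}\D_{j'}v^k$. In short, the divergence-free condition is used a second time, inside the Littlewood--Paley analysis, and that is exactly what your bilinear reduction discards.
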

\begin{proof}
Let us choose on~$H^{\frac12}$ the following inner product
$$
(a|b)_{\dH^{\frac12}} = \sum_{j\in\Z} 2^j(\D_ja|\D_jb)_{L^2}.
$$
 We use Bony's decomposition
\eqref{pd} to deal with the product function $v\cdot\na v.$ Namely,
we write \beq \label{bonydecomp}
\begin{split} v^\ell\p_\ell v^k &=T({v^\ell},\p_\ell v^k)+T({\p_\ell v^k},v^\ell)+R(v^\ell,\p_\ell v^k).
\end{split}
\eeq Let us start with the terms~$T({\partial_\ell v^k}, v^\ell)$.
The support of the Fourier transform of the function~$S_{j'-1} \partial_\ell
v^k\D_{j'}v^\ell$ is included in a ring of the type~$2^{j'}\wt\cC$.
Thus according to Definition \ref{def2.2}, we have \beno
\|\D_jT({\partial_\ell v^k}, v^\ell)\|_{L^2} &\leq & \sum_{|j'-j|\leq 4} \|S_{j'-1}\partial_\ell v^k\D_{j'}v^\ell\|_{L^2}\\
&\leq & \sum_{|j'-j|\leq 4} \|S_{j'-1}\partial_\ell
v^k\|_{L^\infty}\|\D_{j'}v^\ell\|_{L^2}\\ &\lesssim &  \|\p_\ell
v^k\|_{\cB_{p_{k,\ell} }}
2^{j\left(1-\frac1{p_{k,\ell}}\right)}\sum_{|j'-j|\leq
4}2^{j'\left(1-\frac1{p_{k,\ell}}\right)} \|\D_{j'}v^\ell\|_{L^2}.
\eeno Now let us write that
$$
\longformule{ 2^j\bigl|\bigl(\D_jT({\partial_\ell v^k},v^k)\, \big|\,
\D_jv^k\bigr)_{L^2}\bigr|  \lesssim
 \|\p_\ell v^k\|_{\cB_{p_{k,\ell} }}
\bigl(2^{\frac j2}\|\D_jv\|_{L^2}\bigr)^{\frac1{p_{k,\ell}}}
\bigl(2^{\frac {3j} 2}\|\D_jv\|_{L^2}\bigr)^{1-\frac1{p_{k,\ell}}} }
{{}\times \sum_{|j'-j|\leq 4}2^{\frac {j-j'} 2}\bigl(2^{\frac
{j'}2}\|\D_{j'}v\|_{L^2}\bigr)^{\frac1{p_{k,\ell}}} \bigl(2^{\frac
{3j'} 2}\|\D_{j'}v\|_{L^2}\bigr)^{1-\frac1{p_{k,\ell}}} . }
$$
Using  the characterization of Sobolev norms in term of
Littlewood-Paley theory,  we get
\beq
\label{lemtheoblowupendpointdemoeq1}  \sum_{ k,\ell=1}^3
\sum_{j\in\Z} 2^j \bigl|(\D_jT({\partial_\ell v^k},v^\ell) \, | \,
\D_jv^k)_{L^2}\bigr| \lesssim \sum_{1\leq k,\ell\leq
3}\|\partial_\ell v^k\|_{\cB_{p_{k,\ell}}} \|v\|_{
H^{\frac12}}^{\frac2{p_{k,\ell}}} \|\nabla v\|_{
H^{\frac12}}^{2-\frac2{p_{k,\ell}}}.
\eeq

The terms~$R(\partial_\ell v^k,v^\ell)$ are a little bit more
delicate. The support of the Fourier transform of~$\D_{j'}
\partial_\ell v^k\wt{\D}_{j'}v^\ell$ is included in a ball  of the
type~$2^{j'}\wt B$. Thus we have
$$
\D_j R(\partial_\ell v^k,v^\ell) = \sum_{j'\geq j-N_0}
\D_j\bigl(\D_{j'}\partial_\ell v^k\wt{\D}_{j'}v^\ell\bigr).
$$
Because of the divergence free condition of $v$,  we can write
$$
 \sum_{\ell=1}^3 \D_j R(\partial_\ell v^k,v^\ell) = \sum_{\ell=1}^3
 \partial_\ell\!\!\sum_{j'\geq j-N_0}\D_j\bigl(\D_{j'} v^k\wt{\D}_{j'}v^\ell\bigr).
$$
Using the fact that the Fourier transform  of~$\D_{j'}$ is supported
in a ring of the type~$2^{j'}\cC$, we can write that \beq
\label{lemtheoblowupendpointdemoeq11} \D_{j'} v^k=\sum_{\ell'=1}^3
2^{-j'} \wt \D_{j'}^{\ell'} \D_{j'}
\partial_{\ell'}v^k\with\wt\D_{j'}^{\ell'} a\eqdefa \cF^{-1}
\bigl(\phi^{\ell'}(2^{-j'}\xi)\wh{a}\bigr) \eeq
where~$\phi^{\ell'},$ for $\ell'=1,2,3,$ are function
of~$\cD(\R^3\setminus\{0\})$ (see for instance  page 56 of
\ccite{BCD} for the details). We thus obtain
$$
 \Bigl\|\sum_{\ell=1}^3 \D_j R(\partial_\ell v^k,v^\ell) \Bigr\|_{L^2} \lesssim \sum_{\ell=1}^3
 \sum_{j'\geq j-N_0}  2^{-(j'-j)}  2^{2j'\left(1-\frac1{p_{k,\ell}}\right)} \|\partial_{\ell}v^k\|_{\cB_{p_{k,\ell}}}  \|\wt{\D}_{j'}
 v\|_{L^2},
$$
from which, we infer \beno
I_R(v)&\eqdefa&  \sum_{ k,\ell=1}^3 \sum_{j\in\Z} 2^j  \bigl |  \bigl( \D_j R(\partial_\ell v^k,v^\ell)\,\big|\, \D_jv^k\bigr)_{L^2}\bigr|\\
&\lesssim& \sum_{k=1}^3 \sum_{j\in\Z} 2^j \Bigl \| \sum_{\ell=1}^3  \D_j R(\partial_\ell v^k,v^\ell) \Bigr\|_{L^2}\|\D_jv\|_{L^2}\\
&\lesssim &  \sum_{ k,\ell=1}^3 \|\partial_\ell v^k\|_{\cB_{p_{k,\ell}}} \\
&&\qquad\quad{}\times \!\!\! \sum_{\substack{j,j'\in\Z\\ j'\geq j-N_0}}
2^{-(j'-j)\left(\frac12+\frac1{p_{k,\ell}}\right)}\bigl(2^{\frac
{j'}2}\|\wt{\D}_{j'}v\|_{L^2}\bigr)^{\frac1{p_{k,\ell}}}
\bigl(2^{\frac {3j'}
2}\|\wt{\D}_{j'}v\|_{L^2}\bigr)^{1-\frac1{p_{k,\ell}}} \\
&&\qquad\qquad\qquad\qquad\qquad\qquad\quad{}\times \bigl(2^{\frac
{j}2}\|\D_jv\|_{L^2}\bigr)^{\frac1{p_{k,\ell}}} \bigl(2^{\frac {3j}
2}\|\D_jv\|_{L^2}\bigr)^{1-\frac1{p_{k,\ell}}}. \eeno
 Using the convolution law of~$\ZZ,$ we deduce that
\beq \label{lemtheoblowupendpointdemoeq2} I_R(v)
\lesssim
  \sum_{ k,\ell=1}^3\|\partial_\ell v^k\|_{\cB_{p_{k,\ell}}} \|v\|_{H^{\frac12}}^{\frac2{p_{k,\ell}}}
\|\nabla v\|_{H^{\frac12}}^{2-\frac2{p_{k,\ell}}}. \eeq To deal with
the terms of the form~$T({v^\ell},\partial_\ell v^k)$ in
\eqref{bonydecomp}, we  use the skew symmetry property of the
operator~$v\cdot\nabla$. Then we  follow\ccite{chemin10}. As the
support of the Fourier transform of~$\displaystyle S_{j'-1}a
 \D_{j'}b$ is included in a ring of the type~$2^{j'}\wt \cC$, we
write
\begin{eqnarray*}
\D_j\sum_{j'} S_{j'-1}v^\ell  \D_{j'}\partial_{\ell} w  & = & S_{j-1}  v^\ell   \D_{j}\partial_{\ell} w
+\sum_{\ell=1}^2 R_{j,\ell}^k (v,w) \with\\
R_{j,\ell}^1(v^\ell,w) &\eqdefa &  \sum_{|j'-j|\leq 4} \bigl[\D_j,
S_{j'-1}v^\ell\bigr]
 \D_{j'}\partial_{\ell} w\quad\hbox{and}\\
R_{j,\ell}^2(v^\ell,w) &\eqdefa & \sum_{|j'-j|\leq 4}
(S_{j'-1}v^\ell-S_{j-1}v^\ell)\D_j
 \D_{j'}\partial_{\ell} w.
\end{eqnarray*}
By definition of the space~$\cB_p$ in Definition \ref{def2.2},
Lemma~2.97 of\ccite{BCD} implies that \ben
\|R_{j,\ell}^1(v,w)\|_{L^2}  & \lesssim &2^{-j}\sum_{|j'-j|\leq 4}
\|\nabla S_{j'-1} v^\ell\|_{L^\infty} \left\| \D_{j'}\partial_{\ell}w\right\|_{L^2}\nonumber\\
& \lesssim &2^{-j}\sum_{|j'-j|\leq 4}\sum_{\ell'=1}^3 \|S_{j'-1}
\partial_{\ell'} v^\ell\|_{L^\infty} \left\|
\D_{j'}\partial_{\ell}w\right\|_{L^2}
\label{regulNS2DdemoLemme1demoeqA1}\\ & \lesssim & \sum_{\ell'=1}^3
2^{j\left(1-\frac2{p_{\ell,\ell'} } \right)}\|\partial_{\ell'}
v^\ell\|_{\cB_{p_{\ell,\ell'}}}\sum_{|j'-j|\leq 4}
 \left\| \D_{j'}\partial_{\ell}w\right\|_{L^2}. \nonumber
\een In order to estimate~$\|R_{j,\ell}^2(v,w)\|_{L^2}$, we use
Lemma  \ref{lemBern} to get
$$
\|R_{j,\ell}^2(v^\ell,w)\|_{L^2}  \lesssim\!\!\!\!
\sum_{\substack{|j'-j|\leq 4\\j''\in [j-1,j'-1]}}
\|\D_{j''}v^\ell\|_{L^\infty} \|\D_j\D_{j'} \p_\ell w\|_{L^2}.
$$
Notice that \refeq{lemtheoblowupendpointdemoeq11} ensures that
$$
\|\D_{j}v^\ell  \|_{L^\infty} \lesssim
2^{-j}\sum_{\ell'=1}^3\|\D_{j} \partial_{\ell'} v^\ell\|_{L^\infty}.
$$
By virtue of Definition \ref{def2.2}, this implies that
$$
\|\D_j v^\ell\|_{L^\infty} \lesssim \sum_{\ell'=1}^3 2^{j\left(1-\frac 2{p_{\ell,\ell'} }\right )}\|\partial_{\ell'} v^\ell\|_{\cB_{p_{\ell,\ell'}}}.
$$
We thus infer that \beq \label{regulNS2DdemoLemme1demoeqA1a}
\|R_{j,\ell}^2(v,w)\|_{L^2} \lesssim \sum_{\ell'=1}^3
2^{j\left(1-\frac2{p_{\ell,\ell'} } \right)} \|\partial_{\ell'}
v^\ell\|_{\cB_{p_{\ell,\ell'}}}\sum_{|j'-j|\leq 4}
 \left\| \D_{j'}\partial_{\ell}w\right\|_{L^2}.
\eeq Because of the divergence  free  on~$v$, we have
$$
(S_{j-1}  v\cdot   \D_{j} w|\D_jw)_{L^2} =0,
$$
this together with\refeq{regulNS2DdemoLemme1demoeqA1} and
\eqref{regulNS2DdemoLemme1demoeqA1a} gives rise to
\[
 \begin{split}
 \Bigl| \sum_{ k,\ell=1}^3& \sum_{j\in\Z} 2^j
\bigl( \D_j T({v^\ell},\partial_\ell v^k)\ \big|\
\D_jv^k\bigr)_{L^2}\Bigr|\\
\lesssim & \sum_{1\leq k,\ell\leq3} \|\partial_\ell v^k\|_{\cB_{p_{k,\ell}}} \\
&\qquad\quad{}\times \!\!\! \sum_{\substack{ j,j'\in\Z \\|j'-j|\leq 4}}
2^{(j-j')\left(\frac12-\frac1{p_{k,\ell}}\right)}\bigl(2^{\frac
{j'}2}\|\D_{j'}v\|_{L^2}\bigr)^{\frac1{p_{k,\ell}}} \bigl(2^{\frac
{3j'}
2}\|\D_{j'}v\|_{L^2}\bigr)^{1-\frac1{p_{k,\ell}}} \\
&\qquad\qquad\qquad\qquad\qquad\qquad\quad{}\times \bigl(2^{\frac
{j}2}\|\D_jv\|_{L^2}\bigr)^{\frac1{p_{k,\ell}}} \bigl(2^{\frac {3j}
2}\|\D_jv\|_{L^2}\bigr)^{1-\frac1{p_{k,\ell}}}\\
\lesssim &
  \sum_{ k,\ell=1}^3\|\partial_\ell v^k\|_{\cB_{p_{k,\ell}}} \|v\|_{H^{\frac12}}^{\frac2{p_{k,\ell}}}
\|\nabla v\|_{H^{\frac12}}^{2-\frac2{p_{k,\ell}}},
\end{split}
\] which along with Inequalities\refeq{bonydecomp},
\refeq{lemtheoblowupendpointdemoeq1}
and\refeq{lemtheoblowupendpointdemoeq2} yields the lemma.
\end{proof}

We now turn to the proof of Theorem\refer{blowupBesovendpoint} and
Theorem\refer{theoxplosaniscaling}.

\begin{proof}[Conclusion of the proof of
Theorem\refer{blowupBesovendpoint}] We shall prove that, for any~$T$
less than~$T^\star$, \beq \label{a.11} \|v(T)\|^2_{H^{\frac12}} +
\int_0^T \|\nabla v(t) \|^2_{ H^{\frac12}} dt \leq \|v_0\|_{H^{\frac
12} }^2\exp \biggl(C \sum_{1\leq k,\ell\leq3} \int_0^{T}
\|\partial_\ell v^k(t)\|^{p_{k,\ell}}_{\cB_{p_{k,\ell}}} dt\biggr).
\eeq As a matter of fact, we get, by taking $H^{\frac12}$ energy
estimate to $(NS)$ and Lemma \ref{lemtheoblowupendpoint}, that
$$
\longformule{
\frac 12\frac d{dt} \|v(t)\|_{\dH^{\frac12}}^2
+\|\nabla
v(t)\|_{\dH^{\frac12}}^2 = -\bigl( v\cdot\na v | v\bigr)_{H^{\frac12}}
}
{
{}\lesssim
  \sum_{ k,\ell=1}^3\|\partial_\ell v^k(t)\|_{\cB_{p_{k,\ell}}} \|v(t)\|_{H^{\frac12}}^{\frac2{p_{k,\ell}}}
\|\nabla v(t)\|_{H^{\frac12}}^{2-\frac2{p_{k,\ell}}}.
}
$$
Using the convexity inequality, we infer that
$$
\frac d{dt} \|v(t)\|_{\dH^{\frac12}}^2 + \|\nabla
v(t)\|_{\dH^{\frac12}}^2 \lesssim \|v(t)\|^2_{H^{\frac12}}\Bigl(
 \sum_{ k,\ell=1}^3\|\partial_\ell v^k(t)\|^{p_{k,\ell}}_{\cB_{p_{k,\ell}}}\Bigr) .
$$
Gronwall lemma implies \eqref{a.11}. This completes the proof of
Theorem\refer{blowupBesovendpoint}.
\end{proof}


\begin{proof}[Conclusion of the proof of
Theorem\refer{theoxplosaniscaling}]  We are going to deduce
Theorem\refer{theoxplosaniscaling} from
Theorem\refer{blowupBesovendpoint} and
Proposition\refer{SophisticatedGronwall}. It follows from Lemma
\ref{lemBern} that \beno \max_{1\leq\ell\leq 3} \|\p_\ell
v^3\|_{\cB_p}\lesssim
\sup_{j\in\Z}2^{j\bigl(-1+\f2p\bigr)}\|\D_jv^3\|_{L^\infty}\lesssim
\sup_{j\in\Z}2^{j\bigl(\f12+\f2p\bigr)}\|\D_jv^3\|_{L^2}\lesssim
\|v^3\|_{\dH^{\f12+\f2p}}, \eeno which together with \eqref{k.1}
ensures that \beq
 \label{b.29} \max_{1\leq\ell\leq
3}\int_0^{T^\star}\|\p_\ell v^3(t)\|_{\cB_p}^p\,dt\lesssim
\int_0^{T^\star}\|v^3(t)\|_{\dH^{\f12+\f2p}}^p\,dt<\infty.
\eeq The
same argument yields
 \beq
 \label{b.30}
\forall T<T^\star\,,\   \int_0^{T}\|\partial_{\rm h}^2\D_{\rm h}^{-1}\p_3 v^3(t)\|_{\cB_p}^p\,dt\lesssim
\int_0^{T}\|v^3(t)\|_{\dH^{\f12+\f2p}}^p\,dt.
 \eeq
While for any integer $N,$ we get by using Lemma \ref{lemBern} that,
for any function~$a$ and $p>\f32,$
\beno
 \|a\|_{\cB_p} &\leq&
\sum_{j\leq N}2^{j\bigl(-2+\f2p\bigr)}\|\D_ja\|_{L^\infty}
+\sum_{j>N}2^{j\bigl(-2+\f2p\bigr)}\|\D_ja\|_{L^\infty}\\
&\lesssim &
\sum_{j\leq N}2^{\f{2j}p}\|a\|_{L^{\f32}}+\sum_{j>
N}2^{j\bigl(-\f43+\f2p\bigr)}\|\na a\|_{L^{\f95}}\\
&\lesssim&2^{\f{2N}p}\|a\|_{L^{\f32}}+2^{N\bigl(-\f43+\f2p\bigr)}\|\na
a\|_{L^{\f95}}. \eeno Choosing $N=\ds \biggl[\log_2\biggl(e+
\Bigl(\frac {\|\na a\|_{L^{\f95}}}
{\|a\|_{L^{\f32}}}\Bigr)^{\f34}\biggr)\biggr],$ we obtain
$$
  \|a\|_{\cB_p}\lesssim
\|a\|_{L^{\f32}}^{1-\f3{2p}}\|\na a\|_{L^{\f95}}^{\f3{2p}}.
$$
Applying this inequality with~$a=\partial_{\rm h}^2\D_{\rm h}^{-1}\om$, we get
$$
 \|\partial_{\rm h}^2\D_{\rm h}^{-1}\om\|_{\cB_p}\lesssim
\|\partial_{\rm h}^2\D_{\rm h}^{-1}\om\|_{L^{\f32}}^{1-\f3{2p}}\|
\partial_{\rm h}^2\D_{\rm h}^{-1}\na\om\|_{L^{\f95}}^{\f3{2p}}.
$$
Once noticed that~$L^p= L^p_{\rm v}(L^p_{\rm h})$, we apply Riesz theorem in the horizontal variables to infer that
$$
\|\partial_{\rm h}^2\D_{\rm h}^{-1}\om\|_{L^{\f32}}\lesssim \|\om\|_{L^{\f32}}\andf
\| \partial_{\rm h}^2\D_{\rm h}^{-1}\na\om\|_{L^{\f95}} \lesssim \|\na
\om\|_{L^{\f95}}.
$$
Then due to\refeq{BiotSavartomegademoeq2}, we deduce that
 \beno
  \|\partial_{\rm h}^2\D_{\rm h}^{-1}\om\|_{\cB_p}\lesssim
\|\om\|_{L^{\f32}}^{1-\f3{2p}}\|\na
\om\|_{L^{\f95}}^{\f3{2p}}\lesssim
\|\om\|_{L^{\f32}}^{1-\f3{2p}}\bigl\|\na\om_{\frac34}\bigr\|_{L^2}^{\f2p},
\eeno Together with 
\eqref{b.30}, this gives, for any~$T$ less than~$T^\star$,
$$
{
 \int_0^{T}\|\nabla_{\rm h}v^{\rm
h}(t)\|_{\cB_p}^p\,dt\lesssim
\int_0^{T}\|v^3(t)\|_{\dH^{\f12+\f2p}}^p\,dt
}
{
{}+ \sup_{t\in [0,T[}\|\om(t)\|_{L^{\f32}}^{p-\f32}
\int_0^{T}\bigl\|\na\om_{\f34}(t)\bigr\|_{L^2}^2\,dt.
}
$$
Proposition\refer{SophisticatedGronwall} then implies that \beq
\label{b.31}
\begin{split} \int_0^{T^\star}\|\nabla_{\rm h}v^{\rm
h}(t)\|_{\cB_p}^p\,dt<\infty.
\end{split}
\eeq

Let us  observe  from \eqref{a.1} and \eqref{a.1wrt} that the
components of $\partial_3v^{\rm h}$ are sum of terms of  the
form~$\partial_{\rm h}\D_{\rm h}^{-1}
\partial_3 f$ with~$f=\om$ or~$\partial_3v^3$.  On the one hand, we get, by applying Lemma
\ref{lemBern}, that \beno \|\D_j\p_3v^{\rm h}_{\rm
div}(t)\|_{L^\infty} & \lesssim &\sum_{\substack{k\leq
j+N_0\\\ell\leq j+N_0}}\|\D_k^{\rm h}\D_\ell^{\rm v}\nabla_{\rm
h}\D_{\rm h}^{-1}\p_3^2
v^3(t)\|_{L^\infty}\\
&\lesssim &\sum_{\substack{k\leq j+N_0\\\ell\leq
j+N_0}}2^{\f{\ell}2}\|\D_k^{\rm h}\D_\ell^{\rm v}\p_3^2v^3(t)\|_{L^2}\\
 & \lesssim & \|\p_3^2v^3(t)\|_{\cH_\th}\sum_{\substack{k\leq j+N_0\\\ell\leq j+N_0}}
2^{k(\f12-\th)}2^{\ell(\f12+\th)}\\
 &  \lesssim &  2^j\|\p_3^2v^3(t)\|_{\cH_\th}.
\eeno
Together with Definition \ref{def2.2}  this implies
 \beno
\|\p_3v^{\rm h}_{\rm div}(t)\|_{\cB_2}\lesssim
\|\p_3^2v^3(t)\|_{\cH_\th}. \eeno
Proposition\refer{SophisticatedGronwall} implies  that
 \beq
\label{b.32}
\int_0^{T^\star}\|\p_3v^{\rm h}_{\rm div}(t)\|_{\cB_2}^2\,dt\lesssim
\int_0^{T^\star}\|\p_3^2v^3(t)\|_{\cH_\th}^2\,dt<\infty.
\eeq

On the other hand, we deduce from Lemma \ref{lemBern} that
\beno
\|\D_j\p_3v^{\rm h}_{\rm curl}(t)\|_{L^\infty}
&\lesssim &
2^{\f{2j}3}\sum_{k\leq j+N_0}2^{\f{k}3}\|\D_j\D_k^{\rm h}\p_3\om(t)\|_{L^{\f32}}\\
&\lesssim & 2^j\|\p_3\om(t)\|_{L^{\f32}}, \eeno from which and
\eqref{estimbasomega34}, we infer that for any~$T$ less
than~$T^\star$, \beno \int_0^{T}\|\p_3v^{\rm h}_{\rm
curl}(t)\|_{\cB_2}^2\,dt
& \lesssim & \int_0^{T}\|\p_3\om(t)\|_{L^{\f32}}^2\,dt\\
 & \lesssim & \sup_{t\in
[0,T]}\bigl\|\om_{\f34}(t)|\|_{L^2}^{\f23}\int_0^{T}\bigl\|\na
\om_{\f34}(t)\bigr\|^2_{L^2}\,dt. \eeno
Proposition\refer{SophisticatedGronwall} then implies that
$$
\int_0^{T^\star}\|\p_3v_{\rm curl}^{\rm h}(t)\|_{\cB_2}^2\,dt<\infty.
$$
With Inequalities\refeq{b.29}, \eqref{b.31},\refeq{b.32}, and by
virtue of Theorem \ref{blowupBesovendpoint}, we   conclude the proof
of Theorem\refer{theoxplosaniscaling}.
\end{proof}

\noindent {\bf Acknowledgments.} Part of this work was done when
J.-Y. Chemin was visiting Morningside Center of the Academy of
Mathematics and Systems Sciences, CAS. We appreciate the hospitality
and the financial support from MCM and AMSS. P. Zhang is partially
supported by NSF of China under Grant 10421101 and 10931007, and
innovation grant from National Center for Mathematics and
Interdisciplinary Sciences.
\medskip

\end{document}